\definecolor{red}{rgb}{1.00,0.00,0.00}
{\numberwithin{equation}{section}
\setlength{\parindent}{1em}

\newtheorem{theorem}{Theorem}[section]
\newtheorem{lemma}{Lemma}[section]
\newtheorem{remark}{Remark}[section]

\renewcommand{\footnotesize}{\scriptsize}
\newcommand{\normmm}[1]{{\left\vert\kern-0.25ex\left\vert
\kern-0.25ex\left\vert #1
    \right\vert\kern-0.25ex\right\vert\kern-0.25ex\right\vert}}

\geometry{left=3cm,right=3cm,top=4cm,bottom=2.5cm}

\begin{document}           

\title{A pressure robust staggered discontinuous Galerkin method \\for the Stokes equations}
\author{Lina Zhao\footnotemark[1]\qquad
  \;Eun-Jae Park\footnotemark[2]\qquad
    \;Eric Chung\footnotemark[3]}
\renewcommand{\thefootnote}{\fnsymbol{footnote}}
\footnotetext[1]{Department of Mathematics,The Chinese University of Hong Kong, Hong Kong SAR, China. ({lzhao@math.cuhk.edu.hk}).}
\footnotetext[2]{Department of Computational Science and Engineering, Yonsei University, Seoul 03722, Republic of Korea. ({ejpark@yonsei.ac.kr}).}
\footnotetext[3]{Department of Mathematics, The Chinese University of Hong Kong, Hong Kong SAR, China. ({tschung@math.cuhk.edu.hk}).}

\maketitle

\textbf{Abstract:}
In this paper we propose a pressure robust staggered discontinuous Galerkin method for the Stokes equations on general polygonal meshes by using piecewise constant approximations. We modify the right hand side of the body force in the discrete formulation by exploiting divergence preserving velocity reconstruction operator, which is the crux for pressure independent velocity error estimates. The optimal convergence for velocity gradient, velocity and pressure are proved. In addition, we are able to prove the superconvergence of velocity approximation by the incorporation of divergence preserving velocity reconstruction operator in the dual problem, which is also an important contribution of this paper. Finally, several numerical experiments are carried out to confirm the theoretical findings.


\textbf{Keywords:} staggered DG method, the Stokes equations, superconvergence, polygonal mesh, divergence preserving velocity reconstruction, pressure-robustness

\pagestyle{myheadings} \thispagestyle{plain} \markboth{LinaEric}
    {PR-SDG method for the Stokes equations}


\section{Introduction}

In this paper we consider the following Stokes equations: Find the velocity $\bm{u}:\Omega\rightarrow \mathbb{R}^2$ and the pressure $p:\Omega\rightarrow \mathbb{R}$ such that
\begin{equation}
\begin{split}
-\nu\Delta \bm{u}+\nabla p&=\bm{f} \hspace{0.3cm} \mbox{in}\; \Omega, \\
\nabla\cdot \bm{u}&=0 \quad \mbox{in} \; \Omega,\\
\bm{u}&=0 \quad \mbox{on} \; \partial \Omega,
\end{split}
\label{mixed}
\end{equation}
where $\Omega$ is the computational domain in $\mathbb{R}^2$, $\nu>0$ is a constant viscosity parameter and $\bm{f}\in [L^2(\Omega)]^2$ is the body force.

A large amount of work has been dedicated to solving \eqref{mixed}, see \cite{Girault86,Layton08} and the references therein. However, classical finite element methods can not deliver exactly divergence free solutions in the sense of $H(\text{div};\Omega)$ and the divergence constraint is relaxed in order to guarantee inf-sup stability \cite{JohnLinke17}. As such, these methods suffer from a lack of pressure robustness, specifically, their velocity error estimates are pressure dependent and possibly deteriorates unboundedly for $\nu\rightarrow 0$, which shows some kind of locking phenomena in the sense of \cite{BabuskaSuri92}.


To remedy this issue, various approaches have been proposed. Exactly divergence-free $H^1(\Omega)$ or $H(\text{div};\Omega)$ conforming methods of order $k$ is developed in \cite{ScottVogelius85,Zhang11,FalkNeilan13}. An alternative strategy to achieve divergence-free property is to enrich the $H(\text{div};\Omega)$-conforming elements with divergence free rational functions, which provides the correct flexibility to enforce (strong) continuity (cf. \cite{GuzmanNeilan14,GuzmanNeilan143D}). Moreover, Cockburn \cite{CockburnKanschat07} and Wang \cite{WangYe07} modify the variational formulation and introduce the tangential penalty and thus obtain discontinuous Galerkin divergence-free schemes. Another approach that can deliver pressure independent error estimates is to add grad-div stabilization \cite{Olshanskii04,OlshanskiiLube09}. Recently, a method based on modifying the right hand side is shown to be able to retrieve pressure robustness, and this strategy has been successfully applied to finite volume method \cite{Linke12}, the nonconforming Crouzeix-Raviart element \cite{Linke14,Brennecke15}, the conforming $P_2^+-P_1^{\text{disc}}$ element \cite{LinkeMatthies16}, the hybrid discontinuous Galerkin method \cite{PietroErn16}, low and high order Taylor-Hood and mini elements \cite{LedererLinke17}, hybrid high-order (HHO) method \cite{QuirozPietro20} and weak Galerkin (WG) method \cite{Mu20}.

Recently, polygonal finite element methods have become a hot topic due to their great flexibility in handling complicated geometries. Various approaches in the framework of polygonal grids have been proposed to solve partial differential equations arising from practical applications, such as virtual element method (VEM) \cite{veiga2013,veiga2014}, mimetic finite difference methods \cite{Lipnikov14}, HHO method \cite{pietro2014,pietro2020}, WG methods \cite{WangYe13,WangYe14}, generalized barycentric coordinates methods \cite{sukumar2004}, staggered discontinuous Galerkin (SDG) method \cite{LinaPark18,LinaParkShin19}, etc. SDG as the new generation discretisation methods for PDEs based on discrete unknowns that enjoys staggered continuity properties is initially introduced to solve wave propagation problem \cite{EricEngquistwave06,EricEngquistwave09}. Its close connection to a hybridizable DG method is unveiled in \cite{ChungCockburnFu14,ChungCockburnFu16}. Note that SDG method differs from other DG methods in the sense that the basis functions are locally conforming and penalty terms are not required. This method has many desirable features and has been successfully applied to a wide range of partial differential equations \cite{Cheung15,Du18,LinaPark18,LinaParkShin19,LinaParkcoupledSD20,
LinaParkhybrid20,LinaChungLam20,LinaParkelasticity20,KimLinaPark20}.

The purpose of this paper is to develop a pressure robust staggered discontinuous Galerkin (PR-SDG) method on general polygonal meshes for the Stokes equations with piecewise constant approximations. Our approach is based on the framework of lowest order SDG method introduced in \cite{LinaParkShin19}. The novelty here is to exploit velocity reconstruction operator in the discretization of the source term in the spirit of \cite{Linke12,Mu20}. The principal idea behind this is that discrete divergence-free velocity test functions are mapped to exact divergence-free ones by velocity reconstruction operator. The crux of SDG method for the Stokes equations is to generate a sub-triangulation by connecting an interior point of the polygonal grid to all the vertices of the polygon. Then finite element spaces that enjoy staggered continuity properties for velocity gradient, velocity and pressure are developed, which naturally lead to inf-sup stable pairs. In particular, the velocity space is continuous over the edges of the polygon, which enables us to establish velocity reconstruction operator based on the polygonal grid. A rigorous convergence analysis for $L^2$ errors of velocity gradient, velocity and pressure is investigated. The main difficulty lies in the proof of superconvergence and traditional techniques no long work. To attack this issue, we exploit Aubin-Nitsche duality argument with nonstandard incorporation of continuous and discrete formulation of the dual problem; in addition, velocity reconstruction operator is employed in the discretization of the dual problem. We keep track explicitly of the dependence on the viscosity in the analysis so as to address the practically important issue of body forces with large irrotational part. Note that the lowest order SDG method can be viewed as finite volume method, thus our work provides new perspectives for the understanding of finite volume method. Indeed, our primal and dual partitions are exactly the same as the finite volume method proposed in \cite{Ye06}, whereas our method is based on the first order system and piecewise constant functions are exploited for all the involved unknowns. Importantly, the continuity of all the unknowns are staggered on the interelement boundaries, which naturally yields a stable numerical scheme without resorting to further stabilization. We emphasize that the development of pressure robust method with velocity reconstruction operator on polygonal mesh is still in its infancy \cite{Chen19,Mu20}, the approach developed in this paper will definitely inspire more works in this direction.

The rest of the paper is organized as follows. In the next section, we formulate the PR-SDG method for the Stokes equations based on velocity reconstruction operator. Then in section~\ref{sec:analysis}, we prove the optimal convergence for $L^2$ errors of velocity gradient, velocity and pressure as well as superconvergence of velocity. Several numerical experiments are carried out in section~\ref{sec:numerical} to verify the proposed theories. Finally, a conclusion is given.

\section{Description of PR-SDG method}
In this section we begin with introducing the construction of minimal degree $H(\text{div};\Omega)$ conforming function on convex polygon. Then we propose the PR-SDG method by employing $H(\text{div};\Omega)$ conforming velocity reconstruction in the discretization of the body force. Finally, some fundamental properties inherited from the proposed method are provided.

\subsection{Preliminaries}

Letting $L^2_0(\Omega):=\{q\in L^2(\Omega): \int_{\Omega}q\;dx=0\}$,
the weak formulation for \eqref{mixed} reads as follows: Find $(\bm{u},p)\in [H^1_0(\Omega)]^2 \times L^2_0(\Omega)$ such that
\begin{equation}
\begin{split}
a(\bm{u},\bm{v})+b(\bm{v}, p)&=(\bm{f},\bm{v}) \quad \forall \bm{v}\in [H^1_0(\Omega)]^2,\\
b(\bm{u}, q)&=0 \hskip+1.1cm \forall q\in L^2_0(\Omega),
\end{split}
\label{weak}
\end{equation}
where
\begin{align*}
a(\bm{u},\bm{v})=(\nu\nabla \bm{u},\nabla \bm{v}),\quad b(\bm{v}, q)&=-(q,\nabla \cdot\bm{v}).
\end{align*}
The weak formulation \eqref{weak} is well posed thanks to the coercivity of the bilinear form $a$ and the inf–sup stability of the bilinear $b$, see \cite{Girault86}.

We introduce an auxiliary unknown $\bm{\omega} =\nu \nabla \bm{u}$. Then, the model problem \eqref{mixed} can be recast into the following first order system of equations
\begin{subequations}\label{firstorder}
\begin{align}
\bm{\omega} -\nu \nabla \bm{u} & = \bm{0} \hspace{0.4cm} \mbox{in}\; \Omega,\label{eq:firstorder1}\\
-\nabla\cdot\bm{\omega}+ \nabla p &=\bm{f} \quad \mbox{in}\; \Omega,\label{eq:firstorder2}\\
\nabla\cdot \bm{u}&=0 \hspace{0.4cm} \mbox{in}\; \Omega,\label{eq:firstorder3}\\
\bm{u}&=\bm{0} \hspace{0.4cm} \mbox{on} \; \partial \Omega.
\end{align}
\end{subequations}

Before closing this subsection we introduce some notations that will be employed throughout the paper.
Let $D\subset \mathbb{R}^d,$ $d=1,2$, we adopt the standard notations for the Sobolev spaces $H^s(D)$ and their associated norms $\|\cdot\|_{s,D}$, and semi-norms $|\cdot|_{s,D}$ for $s\geq 0$.
The space $H^0(D)$ coincides with $L^2(D)$, for which the norm is denoted as $\|\cdot\|_{D}$.
We use $(\cdot,\cdot)_D$ to denote the inner product for $d=1,2$.
If $D=\Omega$, the subscript $\Omega$ will be dropped unless otherwise mentioned.
In the sequel, we use $C$ to denote a generic positive constant which may have different values at different occurrences.

\subsection{\texorpdfstring{$H(\text{div})$}- conforming function on polygon}

In this subsection, we briefly introduce the construction of $H(\text{div};\Omega)$ conforming function on convex polygon by following \cite{Chen17}.
Let $T$ be a polygon with $m$ vertices $\mathsf{v}_i$ that are arranged counterclockwise, and let $\bm{n}_i (1\leq i\leq m)$ be the outward unit normal vector on edge $e_i$ that connects vertices
$\mathsf{v}_i$ and $\mathsf{v}_{i+1}$,
where we conveniently denote $\mathsf{v}_j=\mathsf{v}_{j(mod\, m)}$ when the subscript $j$ is not in the range of $\{1,\cdots,m\}$, see Figure~\ref{Wachspress} for an illustration. Let $\mathsf{x}$ be an interior point of $T$, its distance to edge $e_i$ and a scaled normal vector are defined as
\begin{align*}
d_i = (\mathsf{v}_i-\mathsf{x})\cdot\bm{n}_i,\quad \tilde{\bm{n}}_i = \frac{1}{d_i}\bm{n}_i\quad 1\leq i\leq m.
\end{align*}
Then the Wachspress coordinates are defined as (cf. \cite{Wachspress75})
\begin{align*}
\lambda_i = w_i(x)/W(x)\quad 1\leq i\leq m,
\end{align*}
where $w_i(x)=\text{det}(\tilde{\bm{n}}_i,\tilde{\bm{n}}_{i+1})$, $W(x) = \sum_{i=1}^m w_i(x)$.

\begin{figure}[t]
\centering
\includegraphics[width =0.3\textwidth]{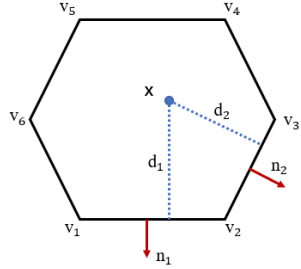}
\caption{Schematic of Wachspress coordinates on a hexagon.}
\label{Wachspress}
\end{figure}

As discusses in \cite{Floater14}, one introduces an auxiliary ratio function
\begin{align*}
R_i(x) = \frac{\nabla w_i(x)}{w_i(x)}\quad 1\leq i\leq m.
\end{align*}
By the quotient rule for differentiation, one is led to
\begin{align*}
\nabla \lambda_i(x) = \lambda_i(x)(R_i-\sum_{j=1}^m \lambda_jR_j)\quad 1\leq i\leq m.
\end{align*}
Then, a counterclockwise $90^o$ rotation of the gradient is defined by
\begin{align*}
\text{curl}(\lambda_i) = \left(
                           \begin{array}{c}
                             -\partial_y \lambda_i \\
                             \partial_x \lambda_i \\
                           \end{array}
                         \right)=\left(
                                 \begin{array}{cc}
                                       0 & -1 \\
                                       1 & 0 \\
                                  \end{array}
                                 \right)
                                 \nabla \lambda_i(x)\quad 1\leq i\leq m.
\end{align*}

Now we are ready to present the construction of minimal degree $H(\text{div};\Omega)$ conforming finite element function on convex polygon. Denote $|T|, |e_i|$ as the area and the length of edge $e_i$ for $1\leq i\leq m$. Let $\mathsf{x}_*$ be an arbitrary point inside polygon $T$, and denote by $|T_i|$ the area of the triangle formed by $\mathsf{x}_*, \mathsf{v}_i, \mathsf{v}_{i+1}$. Then for each $1\leq i\leq m$, we define $\bm{\varphi}_i$ by
\begin{align*}
\bm{\varphi}_i = c_{i,0}(\mathsf{x}-\mathsf{x}_*)+\sum_{k=1}^m c_{i,k} \text{curl}\lambda_k,
\end{align*}
where $c_{i,0}=\frac{|e_i|}{2|T|}$, $c_{i,k}=-\frac{1}{m}\sum_{l=1}^{m-1}lb_{i,k+l}$ and $b_{i,l}=\delta_{i,l}|e_l|-|e_i|\frac{|T_l|}{|T|}$. Here $\delta_{i,l}$ is the Kronecker symbol.

Furthermore, for these basis functions, we have the following properties
\begin{align*}
\bm{\varphi}_i|e_j\cdot\bm{n}_j = \delta_{i,j}\quad \nabla\cdot \bm{\varphi}_i = 2c_{i,0}\quad \forall 1\leq i,j\leq m.
\end{align*}

\begin{lemma}\label{lemma:Hdiv}

For $1\leq i\leq m$, one has \cite{Chen17}
\begin{align*}
\|\bm{\varphi}_i\|_{0,T}\leq C(m)|e_i|.
\end{align*}
In addition, for $\bm{\xi}\in [H^1(T)]^2$, we have
\begin{align*}
\|\bm{\xi}-\Pi^{RT}\bm{\xi}\|_{0,T}\leq C(m)h_T\|\bm{\xi}\|_{1,T}.
\end{align*}

Let $\Pi^0$ be the nodal value interpolation operator based on the generalized barycentric coordinates such as Wachspress coordinates (cf. \cite{Wachspress75}). For any $\phi\in W^{1,p}(T), p>2$, it holds (cf. \cite{Chen17})
\begin{align}
\Pi^{RT}\textnormal{curl}\,\phi=\textnormal{curl}\,\Pi^0\phi.\label{eq:Ih}
\end{align}
\end{lemma}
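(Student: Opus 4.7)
The plan is to verify the three claims separately, exploiting the degrees of freedom $\bm{\varphi}_i\cdot\bm{n}_j|_{e_j}=\delta_{ij}$ and the divergence identity $\nabla\cdot\bm{\varphi}_i=2c_{i,0}$ already established, together with the standard $L^\infty$ bounds on the Wachspress gradients $\nabla\lambda_k$.

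For the basis bound $\|\bm{\varphi}_i\|_{0,T}\le C(m)|e_i|$, I would split $\bm{\varphi}_i$ into the affine part $c_{i,0}(\mathsf{x}-\mathsf{x}_*)$ and the rotational part $\sum_k c_{i,k}\textnormal{curl}\,\lambda_k$. The affine part satisfies $|c_{i,0}(\mathsf{x}-\mathsf{x}_*)|\le (|e_i|/2|T|)\,h_T$ pointwise, so $L^2$ integration combined with the shape-regularity inequality $h_T^2\le C(m)|T|$ yields a contribution of size $C(m)|e_i|$. For the rotational part, the Wachspress estimate $\|\textnormal{curl}\,\lambda_k\|_{L^\infty(T)}\le C(m)h_T^{-1}$ together with the explicit formulas $c_{i,k}=-(1/m)\sum_l l\,b_{i,k+l}$ and $b_{i,l}=\delta_{i,l}|e_l|-|e_i||T_l|/|T|$, which imply $|c_{i,k}|\le C(m)|e_i|$, produces the remaining contribution after integration.

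The interpolation estimate is then a Bramble--Hilbert argument. One first checks that $\Pi^{RT}$ reproduces constant vector fields, so that $(I-\Pi^{RT})\bm{p}=\bm{0}$ for every $\bm{p}\in[\mathbb{P}_0(T)]^2$; writing $\bm{\xi}-\Pi^{RT}\bm{\xi}=(I-\Pi^{RT})(\bm{\xi}-\bm{p})$, invoking the standard trace inequality on each edge to control the normal-flux degrees of freedom, applying the $L^2$ bound on $\bm{\varphi}_i$ just proved, and finally taking an infimum over $\bm{p}$ via Deny--Lions on a reference polygon yields $\|\bm{\xi}-\Pi^{RT}\bm{\xi}\|_{0,T}\le C(m)h_T|\bm{\xi}|_{1,T}$, which is at least as strong as the stated bound. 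For the commuting identity \eqref{eq:Ih}, the strategy is to match normal traces on every edge, since these uniquely determine elements of the discrete $H(\textnormal{div})$ space. On $e_j$ one has $\textnormal{curl}\,\phi\cdot\bm{n}_j=\pm\partial_{\bm{t}_j}\phi$, so the mean normal flux equals $\pm|e_j|^{-1}(\phi(\mathsf{v}_{j+1})-\phi(\mathsf{v}_j))$; the identical expression arises for $\textnormal{curl}\,\Pi^0\phi$ because the Wachspress nodal interpolant agrees with $\phi$ at the vertices, and the hypothesis $p>2$ ensures that these point values are well defined via Sobolev embedding.

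The main obstacle I anticipate is controlling, uniformly in the geometry, the $m$- and shape-dependent constants that appear in the pointwise bounds on the Wachspress gradients and on the auxiliary ratio functions $R_k=\nabla w_k/w_k$, which degenerate as $T$ approaches a non-convex configuration; tracking these dependencies is precisely why the bounds are stated with $C(m)$ rather than a universal constant. Once that bookkeeping is in place, the rest of the argument is standard mixed-element technology.
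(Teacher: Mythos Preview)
The paper does not actually prove this lemma: it is stated as a collection of results with citations to \cite{Chen17} and \cite{Wachspress75}, and no argument is given in the text. Your proposal therefore goes well beyond what the paper does; you have supplied a genuine proof sketch where the authors simply defer to the literature.

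Your arguments are essentially correct and are precisely the ones that underlie the cited reference. The split of $\bm{\varphi}_i$ into the affine piece and the curl piece, followed by the pointwise Wachspress gradient bound $\|\nabla\lambda_k\|_{L^\infty(T)}\le C(m)h_T^{-1}$ and the explicit control $|c_{i,k}|\le C(m)|e_i|$ coming from $|b_{i,l}|\le C|e_i|$, is exactly how Chen--Wang obtain the basis estimate. The Bramble--Hilbert argument for the interpolation error is standard once one knows that constant vectors lie in $\mathrm{span}\{\bm{\varphi}_i\}$ and that $\Pi^{RT}$ reproduces them; you should perhaps state this explicitly, since it is a nontrivial consequence of the construction (it follows from unisolvence plus the fact that the discrete $H(\mathrm{div})$ space contains $[\mathbb{P}_0(T)]^2$). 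For the commuting identity, your edge-by-edge matching of normal traces via $\mathrm{curl}\,\phi\cdot\bm{n}_j=\partial_{\bm{t}_j}\phi$ and the vertex interpolation property of $\Pi^0$ is again the argument in \cite{Chen17}; the one point to make explicit is that $\mathrm{curl}\,\Pi^0\phi$ actually lies in the range of $\Pi^{RT}$, i.e.\ in $\mathrm{span}\{\bm{\varphi}_i\}$, so that equality of edge normal means genuinely implies equality of the two functions.
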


\subsection{Construction of PR-SDG method}

In this subsection we will present the construction of PR-SDG method. To begin with, we will describe the construction of the staggered mesh needed in our method by following \cite{EricEngquistwave06,EricEngquistwave09,ChungLee12,KimChung13}. We let $\mathcal{T}_{u}$ be the
initial partition of the domain $\Omega$ into non-overlapping simple convex polygonal elements (primal mesh).
We use $\mathcal{F}_{u}$ to represent the set of all the edges in this partition (primal edges) and $\mathcal{F}_{u}^{0}$ to represent the
subset of all interior edges, that is, 
$\mathcal{F}_{u}^{0} = \mathcal{F}_{u}\setminus \partial\Omega$.
For each polygon $T$ in the initial partition $\mathcal{T}_u$, we select an interior point $\mathsf{x}_*$ and
 create new edges by
connecting $\mathsf{x}_*$ to the vertices of polygon.
This process will divide $T$ into the union of triangles, where the triangle is denoted as $\tau$.
Moreover, we will use $\mathcal{F}_{p}$ to denote the set of all the new edges generated by this subdivision process (dual edges) and
use $\mathcal{T}_{h}$ to denote the resulting triangulation (simplicial submeshes),
on which our basis functions are defined.
In addition, we define $\mathcal{F}:=\mathcal{F}_{u}\cup \mathcal{F}_{p}$ and $\mathcal{F}^{0}:=\mathcal{F}_{u}^{0}\cup \mathcal{F}_{p}$.
This construction is illustrated in Figure~\ref{fig:mesh},
where solid lines are edges in $\mathcal{F}_u$
and dashed lines are edges in $\mathcal{F}_p$. For each triangle
$\tau\in \mathcal{T}_h$, we let $h_\tau$ be the diameter of
$\tau$ and $h=\max\{h_\tau, \tau\in \mathcal{T}_h\}$. Here we assume that the primal partition satisfies the standard mesh regularity assumptions: 1) Every primal element $T$ is star-shaped with respect to a ball of radius $\rho_B h_T$, where $\rho_B$ is a positive constant. 2) For every primal element $T$ and every edge $e\in \partial T$, it satisfies $|e|\geq \rho_E h_T$, where $\rho_E$ is a positive constant. Note that these assumptions can guarantee that the resulting triangulation $\mathcal{T}_h$ is shape regular.

For each interior edge $e\in \mathcal{F}_{u}^0$, we use $D(e)$ to denote the dual mesh, which is the union of the two triangles in $\mathcal{T}_h$ sharing the edge $e$,
and for each boundary edge $e\in\mathcal{F}_u\backslash\mathcal{F}_u^0$, we use $D(e)$ to denote the triangle in $\mathcal{T}_h$ having the edge $e$,
see Figure~\ref{fig:mesh}. We define a unit normal vector $\bm{n}_e$ on each edge $e\in \mathcal{F}$ as follows: If $e\in \mathcal{F}\backslash \mathcal{F}^0$ is a boundary edge, then we define $\bm{n}_e$ as the unit normal vector of $e$ pointing towards outside of $\Omega$. If $e\in \mathcal{F}^0$ is an interior edge, then we fix $\bm{n}_e$ as one of the two possible unit normal vectors on $e$. We will use $\bm{n}$ instead of $\bm{n}_e$ to simplify the notation when there is no confusion.

\begin{figure}[t]
\centering
\includegraphics[width =0.35\textwidth]{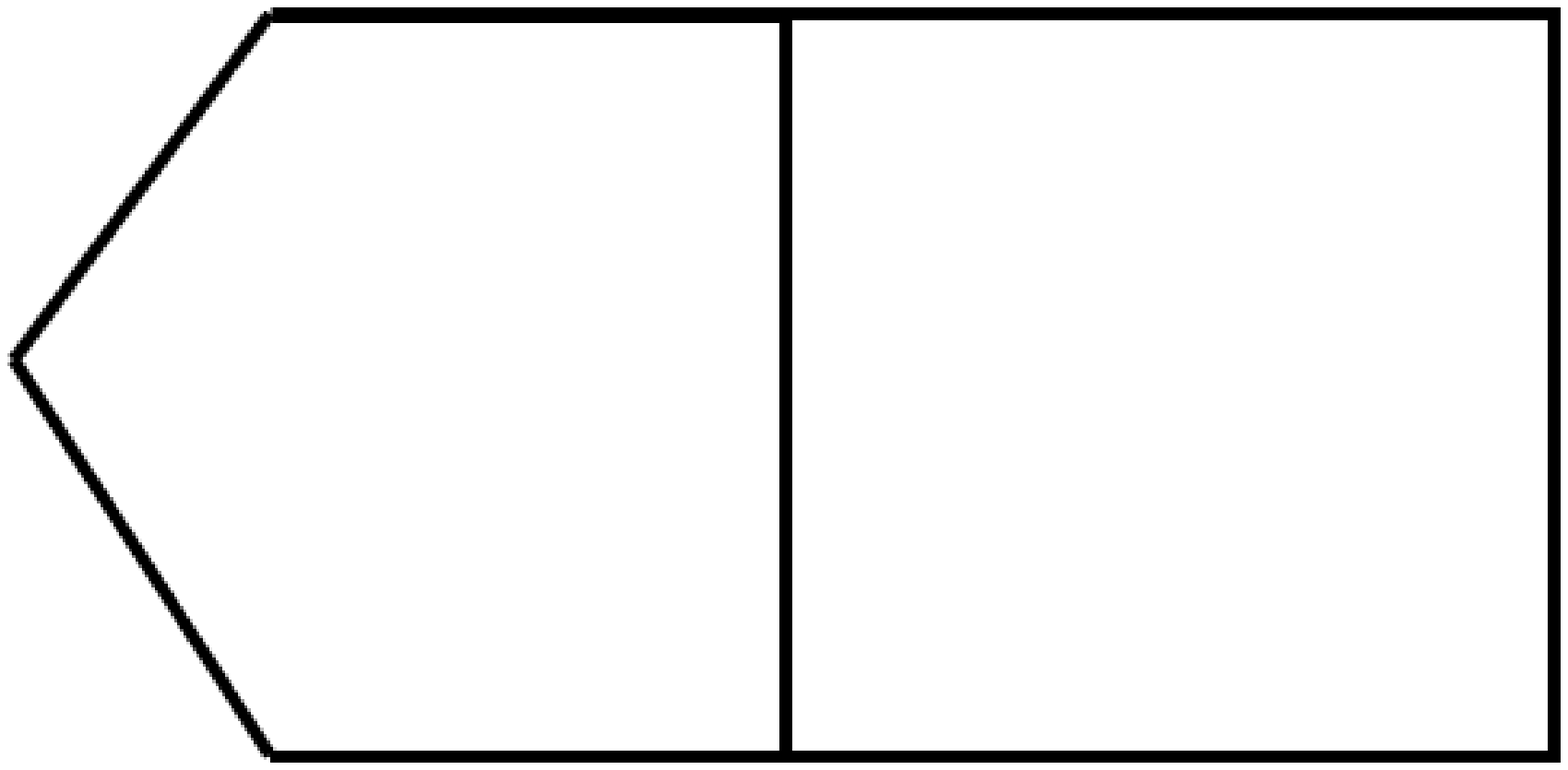}
\includegraphics[width =0.35\textwidth]{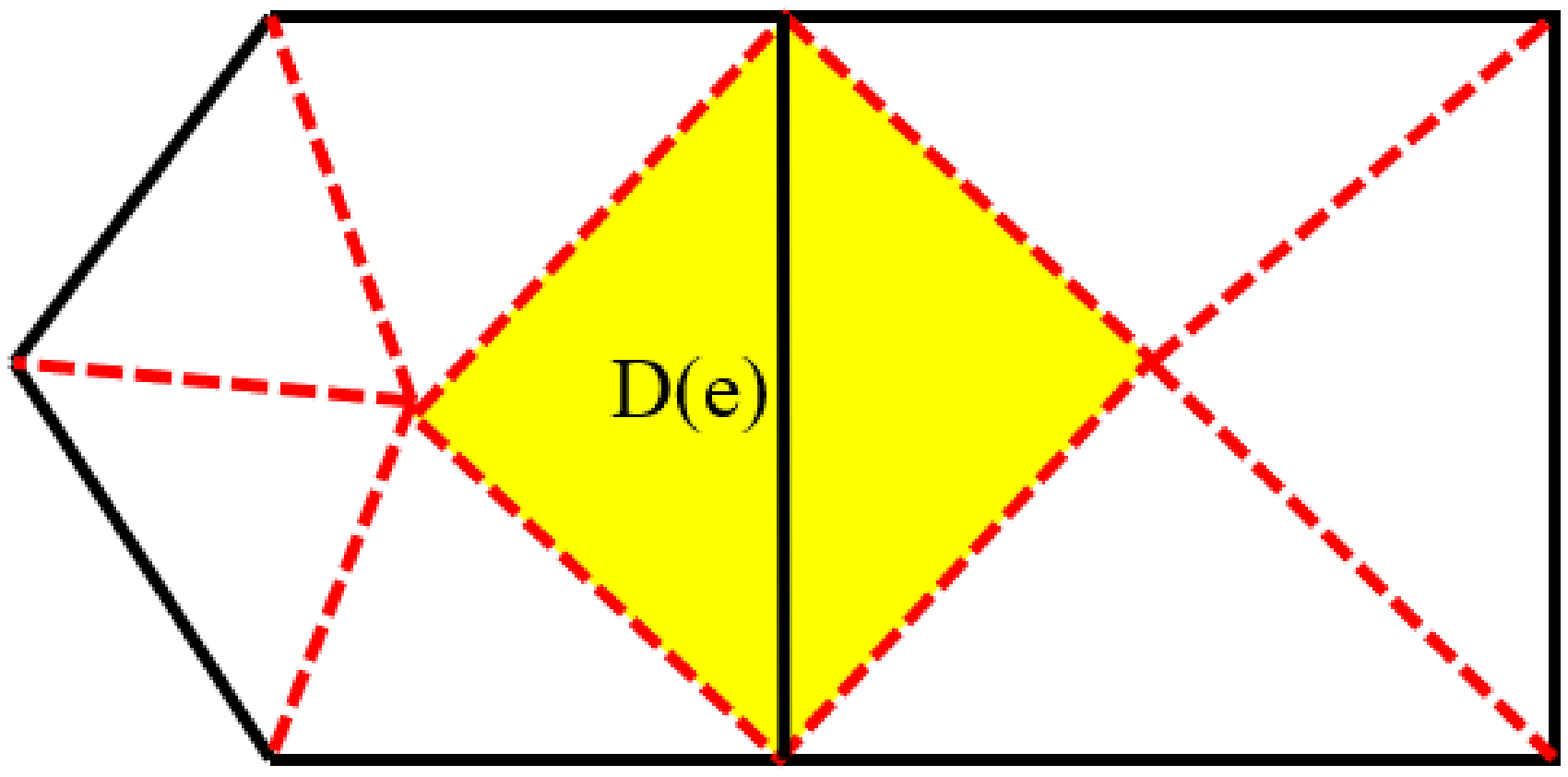}
\caption{Illustration of primal mesh (left) and the resulting simplicial submeshes (right). Solid lines represent primal edges and dashed lines represent dual edges.}
\label{fig:mesh}
\end{figure}

Let $k\geq 0$ be the order of approximation. For every $\tau
\in \mathcal{T}_{h}$ and $e\in\mathcal{F}$,
we define $P_{k}(\tau)$ and $P_{k}(e)$ as the spaces of polynomials of degree less than or equal to $k$ on $\tau$ and $e$, respectively.
Now we are ready to describe the finite element spaces that will be used to define our numerical scheme. First, the locally $H^1(\Omega)$ conforming SDG space for velocity is defined as:
\begin{equation*}
S_{h}:=\{\bm{v} : \bm{v}\mid_{D(e)}\in [P_{0}(D(e))]^2, \forall e \in \mathcal{F}_u; \bm{v}\mid_{D(e)}=0\;\mbox{if}\; e\in\mathcal{F}_u\backslash \mathcal{F}_u^0\}.
\end{equation*}
The degrees of freedom for this space can be described as (see Figure~\ref{dof})
\begin{align*}
\phi_e(\bm{v}):=(\bm{v},\bm{\varsigma})_e\quad \forall \bm{\varsigma}\in [P_0(e)]^2,e\in \mathcal{F}_u.
\end{align*}
The discrete $L^2$ norm and $H^1$ norm for the space $S_h$ are given by
\begin{equation*}
\begin{split}
\|\bm{v}\|_{X}^2&=\|\bm{v}\|_{0}^2+\sum_{e\in \mathcal{F}_u^0}h_e\|\bm{v}\|_{0,e}^2,\\
\|\bm{v}\|_{h}^2& =\sum_{e\in \mathcal{F}_p}h_e^{-1}\|[\bm{v}]\|_{0,e}^2.
\end{split}
\end{equation*}
Here $\bm{v}_i = \bm{v}|_{\tau_i}, i=1,2$ and $[\bm{v}]=\bm{v}_{1}-\bm{v}_{2}$ denotes the jump on an interior edge that is shared by two triangles $\tau_1$ and $\tau_2$ belonging to $\mathcal{T}_h$, and we simply take $[\bm{v}]=\bm{v}_{1}$ for $e\in \mathcal{F}_u\backslash \mathcal{F}_u^0$.

Next, the locally $H(\mbox{div};\Omega)$ conforming
SDG space for the velocity gradient approximation is defined as:
\begin{equation*}
V_{h}:=\{\bm{\psi}: \bm{\psi}\mid_{\tau} \in [P_{0}(\tau)]^{2\times 2},\forall \tau \in \mathcal{T}_{h};[\bm{\psi}\bm{n}]\mid_e=0, \forall e\in \mathcal{F}_{p}\},
\end{equation*}
which is equipped by
\begin{align*}
\|\bm{\psi}\|_{X'}^2=\|\bm{\psi}\|_0^2+\sum_{e\in \mathcal{F}_p}h_e\|\bm{\psi}\bm{n}\|_{0,e}^2.
\end{align*}
Invoking scaling arguments, we have
\begin{align}
\|\bm{\psi}\|_0\leq \|\bm{\psi}\|_{X'}\leq C\|\bm{\psi}\|_0.\label{eq:scaling-psi}
\end{align}
We define the following degrees of freedom for $V_h$ and it is illustrated in Figure~\ref{dof}
\begin{align*}
\varphi_e(\bm{\psi}):=(\bm{\psi}\bm{n},\bm{\varsigma})_e \quad \forall \bm{\varsigma}\in [P_0(e)]^2,e\in \mathcal{F}_p.
\end{align*}

In the above definition, the jump $[\bm{\psi}\bm{n}]$ over an edge $e\in \mathcal{F}_p$ is defined as
\begin{align*}
[\bm{\psi}\bm{n}]=\bm{\psi}_1\bm{n}-\bm{\psi}_2\bm{n},
\end{align*}
where $\bm{\psi}_i=\bm{\psi}|_{\tau_i}$, $e$ is the common edge of the two triangles $\tau_1$ and $\tau_2$ that belong to $\mathcal{T}_h$, and $\bm{n}$ is a unit normal to the edge $e$.
\begin{figure}[t]
\centering
\includegraphics[width =0.7\textwidth]{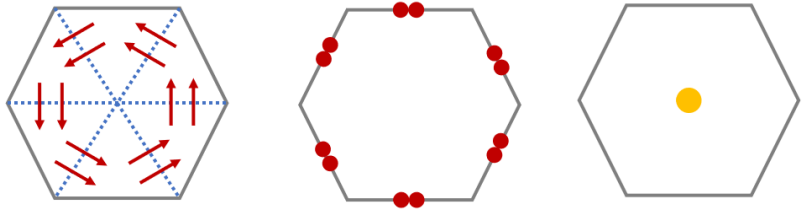}
\caption{Schematic of degrees of freedom for $V_h$ (left), $S_h$ (middle) and $P_h$ (right) over the polygon.}
\label{dof}
\end{figure}

Finally, locally $H^{1}(\Omega)$ conforming finite element space for pressure is defined as:
\begin{equation*}
P_{h}:=\{q: q\mid_{T}\in P_{0}(T),\;\forall T\in \mathcal{T}_u;\int_{\Omega}q\;dx=0\}
\end{equation*}
with norm
\begin{align*}
\|q\|_P^2=\|q\|_0^2+\sum_{e\in \mathcal{F}_p}h_e\|q\|_{0,e}^2.
\end{align*}

Now let us define the $H(\text{div};\Omega)$ conforming interpolation operator on polygonal mesh in the spirit of \cite{Chen17}. For any $\bm{v}\in H(\text{div};\Omega)+S_h$, we define $\Pi^{RT}\bm{v}$ restricted to $T$ by
%
\begin{align*}
\Pi^{RT}\bm{v}=\sum_{i=1}^m r_i(\bm{v})\bm{\varphi}_i,
\end{align*}
where
\begin{align*}
r_i(\bm{v})=
\frac{1}{|e_i|}\int_{e_i} \bm{v}\cdot\bm{n}\;ds\quad \forall e\in \mathcal{F}_u\cap \partial T, T\in \mathcal{T}_u.
\end{align*}
Note that any function $\bm{v}\in S_h$ is continuous over the edge $e\in \mathcal{F}_u$, which by definition yields a function $\Pi^{RT}\bm{v}$ that belongs to $H(\text{div};\Omega)$.

Then following \cite{LinaParkShin19}, the discrete formulation for the Stokes equations \eqref{firstorder} reads as follows: Find $(\bm{\omega}_h, \bm{u}_h,p_h)\in V_h\times S_h\times P_h$ such that
\begin{subequations}\label{eq:SDG}
\begin{align}
B_h(\bm{\omega}_h, \bm{v})+b_h^*(p_h, \bm{v})&=(\bm{f},\Pi^{RT}\bm{v}) \hspace{1cm} \forall \bm{v}\in S_{h}, \label{eq:SDG1} \\
B_h^*(\bm{u}_h, \bm{\psi})&=\nu^{-1}(\bm{\omega}_h, \bm{\psi}) \hspace{0.95cm} \forall \bm{\psi}\in V_h, \label{eq:SDG2} \\
b_h(\bm{u}_h, q)&=0 \hspace{2.5cm}\forall q\in P_h \label{eq:SDG3},
\end{align}
\end{subequations}
where the bilinear forms $B_h(\bm{\omega}_h,\bm{v})$ and $B_{h}^{*}(\bm{u}_h,\bm{\psi})$ are defined as
\begin{equation*}
\begin{split}
B_h(\bm{\omega}_h,\bm{v})&=-\sum_{e\in \mathcal{F}_p}( \bm{\omega}_h \bm{n},[\bm{v}])_e,\\
B_{h}^{*}(\bm{u}_h,\bm{\psi})&=\sum_{e\in \mathcal{F}_{u}^{0}} (\bm{u}_h,[\bm{\psi} \bm{n}])_e
\end{split}
\end{equation*}
and the bilinear forms $b_{h}^{*}(p_h,\bm{v})$ and $b_{h}(\bm{u}_h,q)$ are defined as
\begin{equation*}
\begin{split}
b_{h}^{*}(p_h,\bm{v})&=-\sum_{T\in \mathcal{T}_u}(p_h, \nabla\cdot \Pi^{RT}\bm{v})_T,\\
b_{h}(\bm{u}_h,q)&=-\sum_{e\in \mathcal{F}_{u}^{0}}(\bm{u}_h\cdot \bm{n},[q])_e.
\end{split}
\end{equation*}
Here, we list some important properties that will be employed later. First, integration by parts yields the following adjoint property
\begin{align}
B_h(\bm{\psi}, \bm{v})=B_h^*(\bm{v}, \bm{\psi}) \quad \forall (\bm{\psi}, \bm{v})\in V_h\times S_h. \label{adjoint}
\end{align}
Next, we notice that for $q\in P^0(T)$, $\forall T\in \mathcal{T}_u$, we have from integration by parts and the definition of $\Pi^{RT}$
\begin{align*}
( q, \nabla \cdot \Pi^{RT}\bm{v})_T &=  -(\nabla q,\Pi^{RT}\bm{v})_T+( q, \Pi^{RT}\bm{v}\cdot\bm{n})_{\partial T}\\
&=(q, \Pi^{RT}\bm{v}\cdot\bm{n})_{\partial T}=( q, \bm{v}\cdot\bm{n})_{\partial T},
\end{align*}
thereby summing up over all the elements $T\in \mathcal{T}_u$ yields the following adjoint property
\begin{align}
b_h^*(q,\bm{v})=b_h(\bm{v}, q) \quad \forall (q, \bm{v})\in P_h\times S_h.\label{adjoint-bh}
\end{align}
Finally,  the following inf-sup conditions hold (cf.
\cite{EricEngquistwave06,KimChung13}):
\begin{align}
\inf_{\bm{v}\in S_h}\sup_{\bm{\omega}\in V_h}\frac{B_h(\bm{\omega},\bm{v})}{\|\bm{v}\|_h\|\bm{\omega}\|_0}&\geq C,\label{eq:inf-sup-Bh}\\
\inf_{q\in M_h\backslash\{0\}}\sup_{\bm{v}\in
S_h\backslash\{0\}}\frac{b_h(\bm{v},
q)}{\|\bm{v}\|_h\|q\|_0}&\geq C.\label{eq:inf-sup-bh}
\end{align}

\begin{remark}(implementation).

Since the bilinear forms $b_h(\cdot,\cdot)$ and $b_h^*(\cdot,\cdot)$ are adjoint, we only need to compute $b_h(\cdot,\cdot)$ in the actual implementation. This bypasses the computation of the global velocity reconstruction in the bilinear form, thus our implementation only modifies the right hand side assembling compared to \cite{LinaParkShin19}.

\end{remark}

To facilitate later analysis, we define three interpolation operators $\mathcal{I}_h: [H^1(\Omega)]^2\rightarrow S_h$, $\pi_h: H^1(\Omega)\rightarrow M_h$ and $\mathcal{J}_h: [H^1(\Omega)]^{2\times 2}\rightarrow V_h$, which are given explicitly as
\begin{equation*}
\begin{split}
\mathcal{I}_h \bm{v}\mid_{D(e)}&=\frac{1}{|e|}\int_e\bm{v}\;ds\quad \forall e\in \mathcal{F}_{u},\\
\pi_h q\mid_{T}&=\frac{1}{|T|}\int_T q \;dx \quad \forall T\in \mathcal{T}_u,\\
\mathcal{J}_h \bm{\psi}\bm{n}_e\mid_e&=\frac{1}{|e|}\int_e \bm{\psi} \bm{n}_e\;ds\quad\forall  e\in \mathcal{F}_p.
\end{split}
\end{equation*}
It follows immediately that
\begin{align}
B_h^*(\mathcal{I}_h\bm{u}-\bm{u}, \bm{\psi})&=0 \quad \forall \bm{\psi}\in V_h,\label{errorIh}\\
B_h(\mathcal{J}_h \bm{\omega}-\bm{\omega}, \bm{v})&=0 \quad \forall \bm{v}\in S_h.\label{eq:Jhp}
\end{align}
In addition, standard interpolation error estimates (cf. \cite{Ciarlet78,EricEngquistwave06}) imply
\begin{equation}
\begin{split}
\|\mathcal{I}_h\bm{u}-\bm{u}\|_0&\leq C h\|\bm{u}\|_1\quad \bm{u}\in [H^1(\Omega)]^2,\\
\|\mathcal{J}_h\bm{\omega}-\bm{\omega}\|_0&\leq C h\|\bm{\omega}\|_1\quad \bm{\omega}\in [H^1(\Omega)]^{2\times 2},\\
\|p-\pi_hp\|_P&\leq C h\|p\|_1\quad p\in H^1(\Omega).
\end{split}
\label{eq:interpolation-error}
\end{equation}


Next we will present some properties which play an important role for later analysis. To this end, we decompose $\bm{f}$ as (cf. \cite{Arnold18})
\begin{align}
\bm{f}=\bm{g}+\lambda \nabla \chi\label{eq:decompose-f},
\end{align}
where $\bm{g}$ is the curl of a function in $H(\text{curl};\Omega)$ whose tangent trace vanishes on $\partial \Omega$. $\chi\in H^1(\Omega)$ is such that $\|\nabla \chi\|_0=1$ and $\lambda\in \mathbb{R}^+$.

\begin{lemma}
For $\bm{v}\in S_h$ and $p\in H^1(\Omega)$, it holds
\begin{align}
(\nabla p, \Pi^{RT}\bm{v}) =b_h^*(\pi_h p, \bm{v}).\label{eq:p-property}
\end{align}
Thus, we have velocity invariance property, i.e.,
\begin{align}
(\bm{f}, \Pi^{RT}\bm{v})=(\bm{g},\Pi^{RT}\bm{v})+b_h^*(\pi_h \chi, \lambda \bm{v}).\label{eq:velocityinvariance}
\end{align}
In addition, $\Pi^{RT}\bm{u}_h$ is divergence free.

\end{lemma}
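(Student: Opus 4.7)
The plan is to establish the three assertions in order, each relying on the $H(\mathrm{div};\Omega)$-conformity of $\Pi^{RT}\bm{v}$ (which follows from the continuity of $S_h$ across primal edges $\mathcal{F}_u$) together with the piecewise-constant nature of $\nabla\cdot\bm{\varphi}_i=2c_{i,0}$.

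For \eqref{eq:p-property}, I would integrate by parts elementwise over $T\in\mathcal{T}_u$:
\begin{equation*}
(\nabla p,\Pi^{RT}\bm{v})_T=-(p,\nabla\cdot\Pi^{RT}\bm{v})_T+(p,\Pi^{RT}\bm{v}\cdot\bm{n})_{\partial T}.
\end{equation*}
Summing over $T\in\mathcal{T}_u$, the boundary contributions cancel: $p\in H^1(\Omega)$ is single-valued across interior primal edges, $\Pi^{RT}\bm{v}\cdot\bm{n}$ is continuous across interior primal edges by construction, and on $\partial\Omega$ the boundary condition in $S_h$ forces $r_e(\bm{v})=\tfrac{1}{|e|}\int_e\bm{v}\cdot\bm{n}\,ds=0$, hence $\Pi^{RT}\bm{v}\cdot\bm{n}=0$ there. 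Because $\nabla\cdot\Pi^{RT}\bm{v}|_T=\sum_i r_i(\bm{v})\cdot 2c_{i,0}$ is a constant on $T$, we may replace $p$ by $\pi_h p$ in $(p,\nabla\cdot\Pi^{RT}\bm{v})_T$ without changing the value, giving $-\sum_T(\pi_h p,\nabla\cdot\Pi^{RT}\bm{v})_T=b_h^*(\pi_h p,\bm{v})$.

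The velocity invariance property \eqref{eq:velocityinvariance} is then immediate: substituting the Helmholtz-type decomposition \eqref{eq:decompose-f} and applying \eqref{eq:p-property} with $p=\chi$ yields
\begin{equation*}
(\bm{f},\Pi^{RT}\bm{v})=(\bm{g},\Pi^{RT}\bm{v})+\lambda(\nabla\chi,\Pi^{RT}\bm{v})=(\bm{g},\Pi^{RT}\bm{v})+\lambda b_h^*(\pi_h\chi,\bm{v}),
\end{equation*}
and linearity in the second argument of $b_h^*$ absorbs $\lambda$ into $\bm{v}$.

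For the divergence-free property of $\Pi^{RT}\bm{u}_h$, I would combine the discrete divergence constraint \eqref{eq:SDG3} with the adjoint identity \eqref{adjoint-bh} to obtain $(q,\nabla\cdot\Pi^{RT}\bm{u}_h)=0$ for every $q\in P_h$. Since $\nabla\cdot\Pi^{RT}\bm{u}_h$ is piecewise constant on $\mathcal{T}_u$, testing against all mean-zero piecewise constants forces $\nabla\cdot\Pi^{RT}\bm{u}_h$ to be a global constant $c$ on $\Omega$. To pin down $c=0$, I use $\int_\Omega\nabla\cdot\Pi^{RT}\bm{u}_h=\int_{\partial\Omega}\Pi^{RT}\bm{u}_h\cdot\bm{n}\,ds=0$, where the last equality again comes from $r_e(\bm{u}_h)=0$ on boundary edges. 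The only subtlety to watch is this mean-zero constraint on $P_h$, which is why the constant-on-$\Omega$ plus boundary argument is needed rather than taking $q=\mathbf{1}_T$ directly; this is the step I would flag as the one small conceptual obstacle in an otherwise routine verification.
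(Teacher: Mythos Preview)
Your proof is correct and, for \eqref{eq:p-property} and \eqref{eq:velocityinvariance}, follows exactly the paper's route: elementwise integration by parts, cancellation of the boundary terms by $H(\mathrm{div};\Omega)$-conformity and the homogeneous boundary condition, and replacement of $p$ by $\pi_h p$ using that $\nabla\cdot\Pi^{RT}\bm{v}$ is constant on each $T$.

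For the divergence-free property your argument differs slightly from the paper's. The paper works \emph{locally}: it shows $(\nabla\cdot\Pi^{RT}\bm{u}_h,q)_T=\sum_{e\subset\partial T}(\bm{u}_h\cdot\bm{n},q)_e$ for $q\in P_0(T)$ and then invokes \eqref{eq:SDG3} element by element to conclude $\nabla\cdot\Pi^{RT}\bm{u}_h|_T=0$. The mean-zero constraint on $P_h$ is harmless there because $b_h(\cdot,q)$ depends only on the jumps $[q]$, so testing with $\mathbf{1}_T$ or with $\mathbf{1}_T-|T|/|\Omega|\in P_h$ gives the same value. Your global argument (orthogonality to all of $P_h$ forces a constant divergence, then the divergence theorem on $\Omega$ kills the constant) is an equally clean alternative and makes the role of the mean-zero constraint fully explicit; neither approach has a real advantage over the other.
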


\begin{proof}
For $\bm{v}\in S_h$, we have from integration by parts
\begin{equation}
\begin{split}
(\nabla p, \Pi^{RT}\bm{v}) & = \sum_{T\in \mathcal{T}_u}\Big( -(p,\nabla\cdot \Pi^{RT}\bm{v})_T+( p, \Pi^{RT}\bm{v}\cdot\bm{n})_{\partial T}\Big)\\
&= -\sum_{T\in \mathcal{T}_u} (\pi_h p, \nabla\cdot \Pi^{RT}\bm{v})_T\\
&=b_h^*(\pi_h p, \bm{v}),
\end{split}
\label{eq:property1}
\end{equation}
which gives \eqref{eq:p-property}.
Thereby, we can obtain
\begin{align*}
(\nabla \chi, \Pi^{RT}\bm{v})=b_h^*(\pi_h\chi, \bm{v})\quad \forall \bm{v}\in S_h,
\end{align*}
which together with \eqref{eq:decompose-f} yields \eqref{eq:velocityinvariance}.

Finally, we will show that $\Pi^{RT}\bm{u}_h$ is divergence free. For $T\in \mathcal{T}_u$, we have from integration by parts and the definition of $\Pi^{RT}$
\begin{align*}
(\nabla\cdot \Pi^{RT}\bm{u}_h,q)_T=\sum_{e\in \partial T}( \Pi^{RT}\bm{u}_h\cdot\bm{n},q)_{e}=\sum_{e\in \partial T}( \bm{u}_h\cdot\bm{n},q)_{e}\quad \forall q\in P^0(T).
\end{align*}
In addition, we can infer from \eqref{eq:SDG3} that
\begin{align*}
-\sum_{e\in \partial T}(\bm{u}_h\cdot\bm{n},1)_e=0.
\end{align*}
This gives
\begin{align*}
(\nabla\cdot \Pi^{RT}\bm{u}_h,q)_T=0\quad \forall q\in P^0(T),
\end{align*}
which implies $\nabla\cdot \Pi^{RT}\bm{u}_h=0$ in $T$. On the other hand, $\bm{u}_h$ is continuous over $e\in \mathcal{F}_u$, thus $\Pi^{RT}\bm{u}_h\in H(\text{div};\Omega)$, which implies that $\Pi^{RT}\bm{u}_h$ is divergence free.

\end{proof}

\begin{remark}(comparison to existing methods).

We compare our method with weak Galerkin (WG) method proposed in \cite{Mu20}.
First of all, our method shares the same degrees of freedom for pressure with WG proposed in \cite{Mu20}, see Figures~\ref{dof} and \ref{dof:WG}. Our method has less degrees of freedom for velocity since our velocity space only consists of edge degrees of freedom while WG consists of edge and interior degrees of freedom (cf. Figure~\ref{dof:WG}). On the other hand, our approach is based on the first order system, thus velocity gradient can be calculated simultaneously. However, WG is based on the primal formulation, straightforward calculation of velocity gradient is not available.


\begin{figure}[t]
\centering
\includegraphics[width =0.4\textwidth]{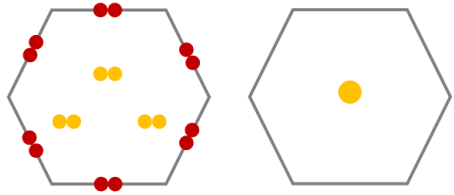}
\caption{Schematic of degrees of freedom for velocity and pressure for WG method over the polygon.}
\label{dof:WG}
\end{figure}




\end{remark}

\begin{lemma}\label{lemma:approximation}
(approximation properties).
We have for $\bm{v}\in S_h$
\begin{align*}
\|\bm{v}-\Pi^{RT}\bm{v}\|_0\leq C h\|\bm{v}\|_h.
\end{align*}

\end{lemma}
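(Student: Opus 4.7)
The plan is to work locally on each polygon $T\in\mathcal{T}_u$ and exploit that $\Pi^{RT}$ preserves constant vector fields, a property which follows at once from the $H^1$-approximation estimate of Lemma~\ref{lemma:Hdiv} applied to $\bm{\xi}$ equal to a constant (yielding $\Pi^{RT}\bm{c}=\bm{c}$ for any $\bm{c}\in\mathbb{R}^2$). Introducing the cell average $\bar{\bm{v}}:=|T|^{-1}\int_T\bm{v}\,dx$ on each $T$, one has the decomposition
\begin{equation*}
\bm{v}-\Pi^{RT}\bm{v}=(\bm{v}-\bar{\bm{v}})-\Pi^{RT}(\bm{v}-\bar{\bm{v}}),
\end{equation*}
so it suffices to bound each piece by $Ch_T\|\bm{v}\|_{h,T}$, where $\|\bm{v}\|_{h,T}^2:=\sum_{e\in\mathcal{F}_p,\,e\subset T}h_e^{-1}\|[\bm{v}]\|_{0,e}^2$.

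For the first piece, observe that $\bm{v}$ is piecewise constant on the sub-triangulation $\{\tau_i\}_{i=1}^m$ of $T$ induced by $\mathsf{x}_*$, with nontrivial jumps only across the interior dual edges $e\in\mathcal{F}_p$ with $e\subset T$. A discrete Poincar\'e inequality on this star-shaped configuration then yields
\begin{equation*}
\|\bm{v}-\bar{\bm{v}}\|_{0,T}^2\leq Ch_T^2\sum_{e\in\mathcal{F}_p,\,e\subset T}h_e^{-1}\|[\bm{v}]\|_{0,e}^2,
\end{equation*}
proven by a short chain argument: any two sub-triangles $\tau_i,\tau_j$ are connected by a path across a uniformly bounded number of interior edges (the bound depending only on $\rho_B$ and $\rho_E$), and each hop contributes at most $\|[\bm{v}]\|_{0,e}^2/|e|$ to $|\bm{v}_i-\bm{v}_j|^2$.

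For the second piece, use the explicit expansion $\Pi^{RT}(\bm{v}-\bar{\bm{v}})=\sum_{i=1}^m r_i(\bm{v}-\bar{\bm{v}})\bm{\varphi}_i$. Since $\bm{v}$ is continuous across the primal edge $e_i$ (taking a constant value $\bm{v}_i$ there), one has $r_i(\bm{v}-\bar{\bm{v}})=(\bm{v}_i-\bar{\bm{v}})\cdot\bm{n}_i$. Combining the basis estimate $\|\bm{\varphi}_i\|_{0,T}\leq C|e_i|$ from Lemma~\ref{lemma:Hdiv} with the shape-regularity relation $|\tau_i|\gtrsim h_T|e_i|$ and Cauchy--Schwarz yields $\|\Pi^{RT}(\bm{v}-\bar{\bm{v}})\|_{0,T}\leq C\|\bm{v}-\bar{\bm{v}}\|_{0,T}$. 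Adding the two bounds and summing over $T$ gives the claimed estimate. The only nontrivial step is the discrete Poincar\'e inequality on the polygon, which I expect to be the main obstacle, although it is fairly standard for broken piecewise-constant functions on a shape-regular star-shaped subdivision.
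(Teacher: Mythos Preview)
Your proposal is correct and follows essentially the same approach as the paper: introduce the cell average $\bar{\bm{v}}$ on each polygon, use that $\Pi^{RT}$ preserves constants to decompose $\bm{v}-\Pi^{RT}\bm{v}$, and bound each piece via the basis estimate $\|\bm{\varphi}_i\|_{0,T}\leq C|e_i|$ together with a local discrete Poincar\'e inequality. The only cosmetic difference is that the paper obtains the Poincar\'e estimate by a finite-dimensional norm-equivalence plus scaling argument rather than your chain argument, and bounds $\|\Pi^{RT}(\bm{v}-\bar{\bm{v}})\|_{0,T}$ directly by $Ch_T\|\bm{v}\|_{h,T}$ rather than routing through $C\|\bm{v}-\bar{\bm{v}}\|_{0,T}$; both variants are equivalent in spirit.
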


\begin{proof}

For any $T\in \mathcal{T}_u$, notice that $\bm{v}$ is a constant over the sub-triangle of $T$ and we use $\bm{v}_i$ to denote the value of $\bm{v}$ restricted to each sub-triangle $\tau\subset T$. Let $\bar{\bm{v}}=\frac{1}{|T|}\sum_{i=1}^m\int_\tau\bm{v}_i$ and we use $\|\bm{v}\|_{h,T}$ to represent $\|\bm{v}\|_h$ restricted to $T$, then we have
\begin{align}
\|\bm{v}-\bar{\bm{v}}\|_{0,T}\leq C h_T\|\bm{v}\|_{h,T},\label{eqbarv}
\end{align}
where we use the fact that if the right side vanishes, then the left side also vanishes. An application of scaling arguments yields the desired estimate.

Since $\Pi^{RT} \bar{\bm{v}}=\bar{\bm{v}}$ for $\bar{\bm{v}}\in [P^0(T)]^2$, we can proceed analogously to \eqref{eqbarv} to obtain
%
\begin{align*}
\|\Pi^{RT}\bar{\bm{v}}-\Pi^{RT}\bm{v}\|_{0,T}=\| \sum_{i=1}^m(\bar{\bm{v}}-\bm{v}_i)\bm{\varphi}_i\|_{0,T}\leq C \sum_{i=1}^m |\bar{\bm{v}}-\bm{v}_i|\|\bm{\varphi}_i\|_{0,T}\leq C h_T\|\bm{v}\|_{h,T},
\end{align*}
where Lemma~\ref{lemma:Hdiv} is employed in the last inequality.

Finally, the triangle inequality yields
\begin{align*}
\|\bm{v}-\Pi^{RT}\bm{v}\|_{0,T}\leq \|\bm{v}-\bar{\bm{v}}\|_{0,T}+\|\Pi^{RT}\bar{\bm{v}}-\Pi^{RT}\bm{v}\|_{0,T}.
\end{align*}
Summing up over all the primal elements $T\in \mathcal{T}_u$ yields the desired estimate.

\end{proof}

\section{A priori error analysis}\label{sec:analysis}

In this section we will present the convergence estimates for all the variables involved. In particular we will prove that the error estimates for velocity are independent of $\nu$. The main contribution of this section is to prove the superconvergence of velocity via duality argument, which is non-trivial.

The unique solvability of the discrete formulation \eqref{eq:SDG} can be proved in line with \cite{LinaParkShin19}. The proof will not be repeated here for the sake of conciseness. We will give the uniform a priori bound for velocity in the next lemma.

\begin{lemma}(uniform a priori bound on the discrete velocity).
The following estimate holds
\begin{align}
\|\bm{u}_h\|_h\leq C \nu^{-1}\|\bm{g}\|_0.\label{eq:apirori}
\end{align}

\end{lemma}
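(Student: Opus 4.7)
The plan is to derive an energy identity by testing the three discrete equations with $(\bm u_h,\bm\omega_h,p_h)$ and then combine it with the inf-sup condition for $B_h$. First I would take $\bm v = \bm u_h$ in \eqref{eq:SDG1}, $\bm\psi = \bm\omega_h$ in \eqref{eq:SDG2} and $q=p_h$ in \eqref{eq:SDG3}. Using the adjoint properties \eqref{adjoint} and \eqref{adjoint-bh} together with the fact that $b_h(\bm u_h,p_h)=0$, all the mixed terms cancel and I obtain the identity
\begin{equation*}
\nu^{-1}\|\bm\omega_h\|_0^2 \;=\; (\bm f,\Pi^{RT}\bm u_h).
\end{equation*}

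Next I would eliminate the irrotational part of $\bm f$ by invoking the velocity-invariance relation \eqref{eq:velocityinvariance}, which gives $(\bm f,\Pi^{RT}\bm u_h)=(\bm g,\Pi^{RT}\bm u_h)+\lambda\,b_h^*(\pi_h\chi,\bm u_h)$. The potentially delicate step is handling the second term since $\pi_h\chi$ may not have zero mean and therefore does not sit in $P_h$ directly; I would subtract the constant $c$ such that $\pi_h\chi-c\in P_h$, observing that this constant contributes nothing because $\Pi^{RT}\bm u_h$ is exactly divergence-free (so $-(c,\nabla\!\cdot\!\Pi^{RT}\bm u_h)_T=0$ on each $T$), after which \eqref{eq:SDG3} kills the remainder via \eqref{adjoint-bh}. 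Thus
\begin{equation*}
\nu^{-1}\|\bm\omega_h\|_0^2 = (\bm g,\Pi^{RT}\bm u_h) \le \|\bm g\|_0\,\|\Pi^{RT}\bm u_h\|_0.
\end{equation*}

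Then I would bound $\|\Pi^{RT}\bm u_h\|_0$ by $\|\bm u_h\|_h$. By the triangle inequality and Lemma~\ref{lemma:approximation}, $\|\Pi^{RT}\bm u_h\|_0 \le \|\bm u_h\|_0 + Ch\|\bm u_h\|_h$, and a standard discrete Poincaré inequality (available since $\bm u_h$ vanishes on boundary edges in $S_h$) gives $\|\bm u_h\|_0 \le C\|\bm u_h\|_h$, so $\|\Pi^{RT}\bm u_h\|_0\le C\|\bm u_h\|_h$. Simultaneously, using the inf-sup condition \eqref{eq:inf-sup-Bh} with the adjoint property \eqref{adjoint} and \eqref{eq:SDG2}, for any $\bm u_h\in S_h$ there exists $\bm\psi\in V_h$ realising
\begin{equation*}
C\|\bm u_h\|_h \le \sup_{\bm\psi\in V_h}\frac{B_h(\bm\psi,\bm u_h)}{\|\bm\psi\|_0} = \sup_{\bm\psi\in V_h}\frac{\nu^{-1}(\bm\omega_h,\bm\psi)}{\|\bm\psi\|_0} \le \nu^{-1}\|\bm\omega_h\|_0.
\end{equation*}

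Combining the two bounds, $\nu^{-1}\|\bm\omega_h\|_0^2 \le C\|\bm g\|_0\|\bm u_h\|_h \le C\nu^{-1}\|\bm g\|_0\|\bm\omega_h\|_0$, which yields $\|\bm\omega_h\|_0\le C\|\bm g\|_0$, and inserting this back gives $\|\bm u_h\|_h \le C\nu^{-1}\|\bm\omega_h\|_0 \le C\nu^{-1}\|\bm g\|_0$, as claimed. The main obstacle is the careful bookkeeping around the mean-value constraint on $P_h$ when applying the velocity invariance step, because it is precisely this step that produces the $\bm g$-only bound and hence the pressure-robust character of the estimate; once that cancellation is secured the remainder is a routine combination of the inf-sup condition and Lemma~\ref{lemma:approximation}.
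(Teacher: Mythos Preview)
Your proposal is correct and follows essentially the same route as the paper's proof: derive the energy identity $\nu^{-1}\|\bm\omega_h\|_0^2=(\bm g,\Pi^{RT}\bm u_h)$ via velocity invariance, combine it with the inf--sup bound $\|\bm u_h\|_h\le C\nu^{-1}\|\bm\omega_h\|_0$ from \eqref{eq:inf-sup-Bh}, and use Lemma~\ref{lemma:approximation} (plus discrete Poincar\'e) to control $\|\Pi^{RT}\bm u_h\|_0$ by $\|\bm u_h\|_h$. The only cosmetic difference is that the paper first rewrites \eqref{eq:SDG1} using \eqref{eq:velocityinvariance} and then tests \eqref{eq:SDG3} with $q=p_h-\lambda\pi_h\chi$, whereas you test first and then invoke velocity invariance, handling the zero-mean constraint on $P_h$ explicitly; since constants drop out of $b_h(\cdot,\cdot)$ anyway, the two are equivalent, and your treatment of this point is in fact more careful than the paper's.
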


\begin{proof}

Invoking \eqref{eq:decompose-f} and \eqref{eq:velocityinvariance}, we can rewrite \eqref{eq:SDG} as
\begin{align}
B_h(\bm{\omega}_h,\bm{v})+b_h^*(p_h,\bm{v})&=(\bm{g}, \Pi^{RT}\bm{v})+\lambda b_h^*(\pi_h\chi,\bm{v}) \quad \forall \bm{v}\in S_{h}, \label{eq:SDG11} \\
B_h^*(\bm{u}_h, \bm{\psi})&=\nu^{-1}(\bm{\omega}_h, \bm{\psi}) \hspace{2.4cm} \forall \bm{\psi}\in V_h, \label{eq:SDG12} \\
b_h(\bm{u}_h, q)&=0 \hspace{4cm}\forall q\in M_h \label{eq:SDG13}.
\end{align}
Taking $\bm{v}=\bm{u}_h, \bm{\psi}=\bm{\omega}_h,q=p_h-\lambda \pi_h\chi$ in \eqref{eq:SDG11}, \eqref{eq:SDG12} and \eqref{eq:SDG13}, and sum up, we can obtain
\begin{align}
\nu^{-1}\|\bm{\omega}_h\|_0^2=(\bm{g},\Pi^{RT}\bm{u}_h).\label{eq:omega}
\end{align}
We can infer from \eqref{adjoint}, \eqref{eq:inf-sup-Bh} and \eqref{eq:SDG12} that
\begin{align*}
\|\bm{u}_h\|_h\leq C \sup_{\bm{\psi}\in V_h}\frac{B_h(\bm{\psi},\bm{u}_h)}{\|\bm{\psi}\|_0}=C\sup_{\bm{\psi}\in V_h}\frac{B_h^*(\bm{u}_h,\bm{\psi})}{\|\bm{\psi}\|_0}=C\sup_{\bm{\psi}\in V_h}\frac{\nu^{-1}(\bm{\omega}_h,\bm{\psi})}{\|\bm{\psi}\|_0}\leq C \nu^{-1}\|\bm{\omega}_h\|_0,
\end{align*}
thereby, we can infer from Lemma~\ref{lemma:approximation} and \eqref{eq:omega}
\begin{align*}
\|\bm{u}_h\|_h^2\leq C \nu^{-2}\|\bm{\omega}_h\|_0^2\leq C \nu^{-1} \|\bm{g}\|_0\|\Pi^{RT}\bm{u}_h\|_0\leq C \nu^{-1} \|\bm{g}\|_0\|\bm{u}_h\|_h,
\end{align*}
which yields the desired estimate by dividing both sides by $\|\bm{u}_h\|_h$.

\end{proof}

\begin{remark}
Contrary to the classical estimate $\|\bm{u}_h\|_h\leq C \nu^{-1}\|\bm{f}\|_0$ that can be obtained from \cite{KimChung13} and  \cite{LinaParkShin19}, the a priori bound \eqref{eq:apirori} persists in the limit $\lambda \rightarrow \infty$. This bound can be incorporated into Navier-Stokes equations to establish error estimates under smallness assumption that only concerns the solenoidal part $\bm{g}$ of the body force. This can improve the existing bound given in \cite{ChungQiu17}, where smallness assumption for $\bm{f}$ is required.

\end{remark}
%

\begin{theorem}\label{thm:energy}

Let $(\bm{\omega},\bm{u},p)\in [H^1(\Omega)]^{2\times 2}\times[H^1(\Omega)]^2\times H^1(\Omega)$ be the weak solution of \eqref{firstorder}, $\Delta \bm{u}\in [L^2(\Omega)]^{2}$ and let $(\bm{\omega}_h,\bm{u}_h,p_h)\in V_h\times S_h\times P_h$ be the numerical solution of \eqref{eq:SDG}. Then the following estimates hold
\begin{align*}
\|\bm{w}-\bm{w}_h\|_0&\leq C h(\nu \|\Delta \bm{u}\|_0+\|\bm{\omega}\|_1),\\
\|\mathcal{I}_h\bm{u}-\bm{u}_h\|_h &\leq C  h(\|\bm{u}\|_2+ \|\Delta\bm{u}\|_{0}),\\
\|p-p_h\|_0&\leq C h (\|p\|_1+\nu \|\Delta \bm{u}\|_0+\|\bm{\omega}\|_1).
\end{align*}

\end{theorem}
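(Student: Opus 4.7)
The strategy is the classical mixed-method energy/inf-sup argument, adapted to the pressure-robust setting. I work with the projected errors
\begin{equation*}
\bm{e}_{\bm{\omega}} := \mathcal{J}_h\bm{\omega}-\bm{\omega}_h,\qquad \bm{e}_{\bm{u}} := \mathcal{I}_h\bm{u}-\bm{u}_h,\qquad e_p := \pi_h p - p_h,
\end{equation*}
derive three error equations, close them by self-testing, invoke the inf-sup conditions \eqref{eq:inf-sup-Bh} and \eqref{eq:inf-sup-bh}, and pass from projected to true errors via the triangle inequality with \eqref{eq:interpolation-error}.

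\textbf{Step 1: error equations.} For $\bm{\psi}\in V_h$, element-wise integration by parts applied to $(\nabla\bm{u},\bm{\psi})$ together with the staggered continuity $[\bm{\psi}\bm{n}]|_e=0$ on $\mathcal{F}_p$ and the vanishing boundary trace of $\bm{u}$ gives $\nu^{-1}(\bm{\omega},\bm{\psi})=B_h^*(\bm{u},\bm{\psi})$; subtracting \eqref{eq:SDG2} and using \eqref{errorIh} yields
\begin{equation*}
B_h^*(\bm{e}_{\bm{u}},\bm{\psi})-\nu^{-1}(\bm{e}_{\bm{\omega}},\bm{\psi})=\nu^{-1}(\bm{\omega}-\mathcal{J}_h\bm{\omega},\bm{\psi}).
\end{equation*}
For $\bm{v}\in S_h$, element-wise constancy of $\bm{v}$ on the dual patches $D(e)$ (so $\nabla\bm{v}|_\tau=0$) and $\bm{v}|_{\partial\Omega}=0$ give $-(\nabla\cdot\bm{\omega},\bm{v})=B_h(\bm{\omega},\bm{v})$; combining with $\nabla p-\bm{f}=\nabla\cdot\bm{\omega}$, the identity \eqref{eq:p-property}, and \eqref{eq:Jhp}, and then subtracting \eqref{eq:SDG1}, produces
\begin{equation*}
B_h(\bm{e}_{\bm{\omega}},\bm{v})+b_h^*(e_p,\bm{v})=(\nabla\cdot\bm{\omega},\Pi^{RT}\bm{v}-\bm{v}).
\end{equation*}
The non-zero right-hand side is the pressure-robust consistency defect; Lemma \ref{lemma:approximation} is exactly tailored to control it by $O(h)\|\bm{v}\|_h$. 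Finally, $\nabla\cdot\bm{u}=0$ and element-wise constancy of $q\in P_h$ imply $b_h(\mathcal{I}_h\bm{u},q)=0$, so $b_h(\bm{e}_{\bm{u}},q)=0$.

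\textbf{Step 2: velocity-gradient bound.} Testing the three error equations with $\bm{\psi}=\bm{e}_{\bm{\omega}}$, $\bm{v}=\bm{e}_{\bm{u}}$, $q=e_p$, the adjoint properties \eqref{adjoint} and \eqref{adjoint-bh} cancel the cross-terms and leave
\begin{equation*}
\nu^{-1}\|\bm{e}_{\bm{\omega}}\|_0^2 = (\nabla\cdot\bm{\omega},\Pi^{RT}\bm{e}_{\bm{u}}-\bm{e}_{\bm{u}}) - \nu^{-1}(\bm{\omega}-\mathcal{J}_h\bm{\omega},\bm{e}_{\bm{\omega}}).
\end{equation*}
Using $\nabla\cdot\bm{\omega}=\nu\Delta\bm{u}$, Lemma \ref{lemma:approximation} for the first term, and \eqref{eq:interpolation-error} for the second, the estimate closes once one controls $\|\bm{e}_{\bm{u}}\|_h$; this comes from \eqref{eq:inf-sup-Bh} and the first error equation, which give $\|\bm{e}_{\bm{u}}\|_h\leq C\nu^{-1}(\|\bm{e}_{\bm{\omega}}\|_0+h\|\bm{\omega}\|_1)$. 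Young's inequality then yields $\|\bm{e}_{\bm{\omega}}\|_0\leq Ch(\nu\|\Delta\bm{u}\|_0+\|\bm{\omega}\|_1)$, and the first stated bound follows from the triangle inequality.

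\textbf{Step 3: velocity and pressure.} Substituting the $\bm{e}_{\bm{\omega}}$ bound back into the control on $\|\bm{e}_{\bm{u}}\|_h$ and using $\|\bm{\omega}\|_1\leq C\nu\|\bm{u}\|_2$ to absorb the $\nu^{-1}$ factor gives $\|\bm{e}_{\bm{u}}\|_h\leq Ch(\|\bm{u}\|_2+\|\Delta\bm{u}\|_0)$, which is the advertised $\nu$-independent velocity estimate. For the pressure, the inf-sup condition \eqref{eq:inf-sup-bh} and \eqref{adjoint-bh} give $\|e_p\|_0\leq C\sup_{\bm{v}}b_h^*(e_p,\bm{v})/\|\bm{v}\|_h$; the second error equation, Lemma \ref{lemma:approximation}, and the bound $|B_h(\bm{e}_{\bm{\omega}},\bm{v})|\leq C\|\bm{e}_{\bm{\omega}}\|_0\|\bm{v}\|_h$ obtained from \eqref{eq:scaling-psi} deliver $\|e_p\|_0\leq Ch(\nu\|\Delta\bm{u}\|_0+\|\bm{\omega}\|_1)$; the triangle inequality with \eqref{eq:interpolation-error} closes the argument. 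The one genuinely novel point is the consistency defect $(\nabla\cdot\bm{\omega},\Pi^{RT}\bm{v}-\bm{v})$: in a non-pressure-robust scheme the analogous residual would feed a $\nu^{-1}\|p\|_1$ term into the velocity error, whereas here Lemma \ref{lemma:approximation} converts it into $Ch\nu\|\Delta\bm{u}\|_0\|\bm{v}\|_h$, cancelling the dangerous $\nu^{-1}$ and producing the advertised bounds.
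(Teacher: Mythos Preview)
Your proof is correct and follows essentially the same route as the paper: derive the error equations, use the adjoint identities \eqref{adjoint}--\eqref{adjoint-bh} to cancel cross terms, close the $\bm{\omega}$-error via the consistency defect $(\nu\Delta\bm{u},\Pi^{RT}\bm{v}-\bm{v})$ controlled by Lemma~\ref{lemma:approximation}, and finish the velocity and pressure via the inf-sup conditions \eqref{eq:inf-sup-Bh}--\eqref{eq:inf-sup-bh}. The only cosmetic difference is that the paper introduces the auxiliary $\tilde{\bm{\omega}}\in V_h$ defined by $\nu^{-1}(\tilde{\bm{\omega}},\bm{\psi})=B_h^*(\mathcal{I}_h\bm{u},\bm{\psi})$, observes it is the $L^2$-projection of $\bm{\omega}$ onto $V_h$, and works with $\tilde{\bm{\omega}}-\bm{\omega}_h$; this orthogonality removes your extra term $\nu^{-1}(\bm{\omega}-\mathcal{J}_h\bm{\omega},\bm{e}_{\bm{\omega}})$ from the energy identity and allows the paper to divide directly rather than invoke Young's inequality, but the resulting bounds are identical.
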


\begin{proof}


%
First, we define $\tilde{\bm{\omega}}\in V_h$ such that
\begin{align}
\nu^{-1}(\tilde{\bm{\omega}}, \bm{\psi})=B_h^*(\mathcal{I}_h \bm{u}, \bm{\psi}) \quad\forall \bm{\psi}\in V_h.\label{eq:dpt}
\end{align}
Invoking \eqref{eq:SDG2} and \eqref{eq:dpt}, we can obtain
\begin{align}
\nu^{-1}(\tilde{\bm{\omega}}-\bm{\omega}_h, \bm{\psi})=
B_h^*(\mathcal{I}_h \bm{u}-\bm{u}_h, \bm{\psi}) \quad \forall \bm{\psi}\in V_h.\label{eq:erroromega}
\end{align}
We can infer from \eqref{errorIh} and \eqref{eq:dpt} that
\begin{align*}
\nu^{-1}(\tilde{\bm{\omega}},\bm{\psi})=B_h^*(\mathcal{I}_h \bm{u},\bm{\psi})=B_h^*(\bm{u}, \bm{\psi})=\nu^{-1}(\bm{\omega}, \bm{\psi})\quad\forall\bm{\psi}\in V_h,
\end{align*}
which means $\bm{\tilde{\omega}}$ is the $L^2$-orthogonal projection of $\bm{\omega}$ onto $V_h$.
Thus,
\begin{align*}
\|\tilde{\bm{\omega}}-\bm{\omega}\|_0\leq \|\bm{\omega}-\mathcal{J}_h \bm{\omega}\|_0.
\end{align*}
An appeal to \eqref{adjoint}, \eqref{eq:inf-sup-Bh} and \eqref{eq:erroromega} yields
\begin{align}
\|\mathcal{I}_h \bm{u}-\bm{u}_h\|_h\leq C\sup_{\bm{\psi}\in V_h}\frac{B_h(\bm{\psi},\mathcal{I}_h \bm{u}-\bm{u}_h)}{\|\bm{\psi}\|_0}=C\sup_{\bm{\psi}\in V_h}\frac{B_h^*(\mathcal{I}_h \bm{u}-\bm{u}_h,\bm{\psi})}{\|\bm{\psi}\|_0}\leq C \nu^{-1} \|\tilde{\bm{\omega}}-\bm{\omega}_h\|_0.\label{eq:compare}
\end{align}
On the other hand, we can infer from the first equation of \eqref{mixed} and \eqref{eq:property1} that
\begin{align*}
(\bm{f},\Pi^{RT}\bm{v})& = (-\nu\Delta \bm{u}+\nabla p, \Pi^{RT}\bm{v}) = -\nu(\Delta \bm{u}, \Pi^{RT}\bm{v}) + b_h^*(\pi_h p, \bm{v})\quad \forall \bm{v}\in S_h.
\end{align*}
Integration by parts yields
\begin{align*}
B_h(\bm{w},\bm{v}) = -(\bm{v}, \nabla\cdot\bm{w})=-\nu(\bm{v}, \Delta \bm{u})\quad \forall \bm{v}\in S_h.
\end{align*}
Thereby, we can obtain
\begin{align}
B_h(\bm{w}, \bm{v})+b_h^*(\pi_h p, \bm{v})=(\bm{f},\Pi^{RT}\bm{v})+\nu(\Delta \bm{u}, \Pi^{RT}\bm{v}-\bm{v})\quad \forall \bm{v}\in S_h.\label{eq:interp1}
\end{align}
On the other hand, we have from the discrete formulation \eqref{eq:SDG1}
\begin{align*}
B_h(\bm{w}_h, \bm{v}) + b_h^*(p_h, \bm{v})=(\bm{f},\Pi^{RT}\bm{v})\quad \forall \bm{v}\in S_h,
\end{align*}
which can be combined with \eqref{eq:interp1} yielding
\begin{align*}
B_h(\bm{w}-\bm{w}_h, \bm{v})+b_h^*(\pi_h p-p_h,\bm{v})=(\nu\Delta \bm{u}, \Pi^{RT}\bm{v}-\bm{v})\quad \forall \bm{v}\in S_h.
\end{align*}
Hence
\begin{align}
B_h(\mathcal{J}_h\bm{w}-\bm{w}_h, \bm{v})+b_h^*(\pi_h p-p_h,\bm{v})=\nu(\Delta \bm{u}, \Pi^{RT}\bm{v}-\bm{v})\quad \forall \bm{v}\in S_h\label{eq:error1}.
\end{align}
Taking $\bm{v} = \mathcal{I}_h\bm{u}-\bm{u}_h$ in \eqref{eq:error1}, then it leads to
\begin{align*}
B_h(\mathcal{J}_h\bm{w}-\bm{w}_h, \mathcal{I}_h\bm{u}-\bm{u}_h)+b_h^*(\pi_h p-p_h, \mathcal{I}_h\bm{u}-\bm{u}_h)=\nu(\Delta \bm{u}, \Pi^{RT}(\mathcal{I}_h\bm{u}-\bm{u}_h)-(\mathcal{I}_h\bm{u}-\bm{u}_h)).
\end{align*}
Since $ b_h^*(\pi_h p-p_h, \mathcal{I}_h\bm{u}-\bm{u}_h)=b_h(\mathcal{I}_h\bm{u}-\bm{u}_h,\pi_h p-p_h)=0$, we have
\begin{align}
B_h(\bm{w}-\bm{w}_h, \mathcal{I}_h\bm{u}-\bm{u}_h)=\nu(\Delta \bm{u}, \Pi^{RT}(\mathcal{I}_h\bm{u}-\bm{u}_h)-(\mathcal{I}_h\bm{u}-\bm{u}_h))\label{consistency}.
\end{align}
It follows from \eqref{adjoint}, \eqref{eq:Jhp}, \eqref{eq:erroromega} and \eqref{consistency} that
\begin{equation*}
\begin{split}
\nu^{-1}\|\tilde{\bm{\omega}}-\bm{\omega}_h\|_0^2&
=B_h(\tilde{\bm{\omega}}-\bm{\omega}_h, \mathcal{I}_h \bm{u}-\bm{u}_h)\\
&=B_h(\tilde{\bm{\omega}}-\bm{\omega}, \mathcal{I}_h\bm{u}-\bm{u}_h)+\nu(\Delta \bm{u}, \Pi^{RT}(\mathcal{I}_h\bm{u}-\bm{u}_h)-(\mathcal{I}_h\bm{u}-\bm{u}_h))\\
&=B_h(\tilde{\bm{\omega}}-\mathcal{J}_h\bm{\omega}, \mathcal{I}_h\bm{u}-\bm{u}_h)+\nu(\Delta \bm{u} , \Pi^{RT}(\mathcal{I}_h\bm{u}-\bm{u}_h)-(\mathcal{I}_h\bm{u}-\bm{u}_h)),
\end{split}
\end{equation*}
therefore, we have from Lemma~\ref{lemma:approximation} and \eqref{eq:compare}
\begin{align*}
\|\tilde{\bm{\omega}}-\bm{\omega}_h\|_0^2&\leq C \Big( \nu\|\tilde{\bm{\omega}}-\mathcal{J}_h\bm{\omega}\|_0\|\mathcal{I}_h\bm{u}-\bm{u}_h\|_h
+\nu^2 h\|\Delta\bm{u}\|_{0}\|\mathcal{I}_h\bm{u}-\bm{u}_h\|_h\Big)\\
&\leq C \Big( \|\tilde{\bm{\omega}}-\mathcal{J}_h\bm{\omega}\|_0\|\tilde{\bm{\omega}}-\bm{\omega}_h\|_0
+ \nu h\|\Delta\bm{u}\|_{0}\|\tilde{\bm{\omega}}-\bm{\omega}_h\|_0\Big).
\end{align*}
The triangle inequality yields
\begin{align*}
\|\tilde{\bm{\omega}}-\mathcal{J}_h\bm{\omega}\|_0\leq \|\tilde{\bm{\omega}}-\bm{\omega}\|_0+\|\bm{\omega}-\mathcal{J}_h\bm{\omega}\|_0\leq 2 \|\bm{\omega}-\mathcal{J}_h\bm{\omega}\|_0.
\end{align*}
Thus, we can conclude that
\begin{align*}
\|\tilde{\bm{\omega}}-\bm{\omega}_h\|_0\leq C h(\|\bm{\omega}\|_1+\nu \|\Delta\bm{u}\|_{0}),
\end{align*}
which gives
\begin{align*}
\|\bm{\omega}-\bm{\omega}_h\|_0\leq  C h(\|\bm{\omega}\|_1+\nu \|\Delta\bm{u}\|_{0}).
\end{align*}
Invoking \eqref{eq:compare}, we can obtain
\begin{align}
\|\mathcal{I}_h\bm{u}-\bm{u}_h\|_h\leq C\nu^{-1}\|\tilde{\bm{\omega}}-\bm{\omega}_h\|_0\leq  C h(\nu^{-1}\|\bm{\omega}\|_1+ \|\Delta\bm{u}\|_{0})\leq C h(\|\bm{u}\|_2+ \|\Delta\bm{u}\|_{0})\label{eq:uz}.
\end{align}
Next, we consider the error estimate for $\|p-p_h\|_0$. The discrete adjoint property \eqref{adjoint-bh} and the inf-sup condition \eqref{eq:inf-sup-bh} imply
\begin{align} \label{estimate_ph_php}
\|p_h-\pi_h p\|_0\leq C \sup_{\bm{v}\in S_h\setminus \{0\}}\frac{b_h(\bm{v},p_h-\pi_h p)}{\|\bm{v}\|_h}=C \sup_{\bm{v}\in S_h\setminus \{0\}}\frac{b_h^*(p_h-\pi_h p,\bm{v})}{\|\bm{v}\|_h}.
\end{align}
Moreover, \eqref{eq:error1} yields
\begin{equation}
\begin{split}
b_h^*(p_h-\pi_h p,\bm{v})&=B_h(\mathcal{J}_h\bm{\omega}-\bm{\omega}_h, \bm{v})-\nu(\Delta \bm{u}, \Pi^{RT}\bm{v}-\bm{v})\quad \forall \bm{v}\in S_h.
\end{split}
\label{bhbyBh}
\end{equation}
We have from \eqref{eq:scaling-psi}, Lemma~\ref{lemma:approximation}, \eqref{estimate_ph_php} and \eqref{bhbyBh} that
\begin{equation*}
\begin{split}
\|p_h-\pi_h p\|_0&\leq C(\|\mathcal{J}_h \bm{\omega}-\bm{\omega}_h\|_{0}+\nu h\|\Delta \bm{u}\|_{0}).
\end{split}
\end{equation*}
Therefore, we can obtain
\begin{align*}
\|p-p_h\|_0&\leq C(\|p-\pi_h p\|_0+\|\bm{\omega}_h-\mathcal{J}_h \bm{\omega}\|_0+\nu h\|\Delta \bm{u}\|_0)\\
&\leq C h(\|p\|_1+\|\bm{\omega}\|_1+\nu \|\Delta\bm{u}\|_{0}).
\end{align*}

\end{proof}

The $L^2$ error estimate for velocity can be stated as follows.
\begin{theorem}
Let $(\bm{\omega},\bm{u})\in [H^1(\Omega)]^{2\times 2}\times [H^1(\Omega)]^2$ be the weak solution of \eqref{firstorder} and $\Delta \bm{u}\in [L^2(\Omega)]^2$, and let $\bm{u}_h\in S_h$ be the numerical solution of \eqref{eq:SDG}. Then the following estimate holds
\begin{align*}
\|\bm{u}-\bm{u}_h\|_0\leq Ch \Big( \|\Delta \bm{u}\|_0+\|\bm{u}\|_2\Big).
\end{align*}

\end{theorem}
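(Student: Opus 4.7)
The plan is direct: apply a triangle inequality, split into interpolation error and discrete error, use standard interpolation for the former, and combine a discrete Poincar\'e--Friedrichs inequality on $S_h$ with Theorem~\ref{thm:energy} for the latter. Concretely, I would first write
$$
\|\bm{u} - \bm{u}_h\|_0 \;\leq\; \|\bm{u} - \mathcal{I}_h\bm{u}\|_0 + \|\mathcal{I}_h\bm{u} - \bm{u}_h\|_0,
$$
and bound the interpolation term by $Ch\|\bm{u}\|_1 \leq Ch\|\bm{u}\|_2$ via \eqref{eq:interpolation-error}.

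For the discrete error I would invoke a discrete Poincar\'e--Friedrichs inequality
$$
\|\bm{v}\|_0 \;\leq\; C\,\|\bm{v}\|_h \qquad \forall\, \bm{v}\in S_h,
$$
which is available because any $\bm{v}\in S_h$ vanishes on all boundary primal edges, so the jumps across dual edges in $\mathcal{F}_p$ connect every sub-triangle back to the boundary through a chain of neighbours; summing those jumps along such a chain (with scaling on each $\tau\in\mathcal{T}_h$) controls the $L^2$ mass of $\bm{v}$. Applied to $\bm{v} = \mathcal{I}_h\bm{u}-\bm{u}_h$ and combined with the energy bound $\|\mathcal{I}_h\bm{u}-\bm{u}_h\|_h \le Ch(\|\bm{u}\|_2 + \|\Delta\bm{u}\|_0)$ already supplied by Theorem~\ref{thm:energy}, this produces the stated $O(h)$ estimate.

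I do not anticipate any serious obstacle: every ingredient except the discrete Poincar\'e--Friedrichs inequality is either quoted from an earlier result in the paper or is a routine interpolation estimate, and the missing inequality is by now a standard tool in the SDG literature (proved by the averaging-plus-path argument sketched above, in the same spirit as the construction of $\bar{\bm{v}}$ in Lemma~\ref{lemma:approximation}). Note that the resulting rate $O(h)$ is optimal for the lowest-order scheme and is \emph{not} superconvergent, so no duality argument is required at this stage; the Aubin--Nitsche machinery advertised in the introduction is reserved for the subsequent superconvergence result, where the velocity reconstruction $\Pi^{RT}$ must be inserted into the discrete dual problem to retain pressure robustness.
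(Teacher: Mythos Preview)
Your proposal is correct and follows essentially the same route as the paper: triangle inequality splitting into $\|\bm{u}-\mathcal{I}_h\bm{u}\|_0$ and $\|\mathcal{I}_h\bm{u}-\bm{u}_h\|_0$, the interpolation estimate \eqref{eq:interpolation-error} for the first term, and the discrete Poincar\'e--Friedrichs inequality (which the paper simply cites from \cite{Brenner03}) combined with the bound \eqref{eq:uz} from Theorem~\ref{thm:energy} for the second. Your remark that no duality argument is needed here, and that the Aubin--Nitsche machinery is reserved for the subsequent superconvergence theorem, is also exactly how the paper proceeds.
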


\begin{proof}

We have from the discrete Poincar\'{e} inequality (cf. \cite{Brenner03}) and \eqref{eq:uz} that
\begin{align*}
\|\mathcal{I}_h\bm{u}-\bm{u}_h\|_0\leq C \|\mathcal{I}_h\bm{u}-\bm{u}_h\|_h\leq C  h(\|\bm{u}\|_2+ \|\Delta\bm{u}\|_{0}).
\end{align*}
Then we can infer from the triangle inequality and \eqref{eq:interpolation-error}
\begin{align*}
\|\bm{u}-\bm{u}_h\|_0\leq \|\mathcal{I}_h\bm{u}-\bm{u}\|_0+\|\mathcal{I}_h\bm{u}-\bm{u}_h\|_0\leq C h\Big(\|\bm{u}\|_2+\|\Delta\bm{u}\|_{0}\Big).
\end{align*}

\end{proof}


Now we state the superconvergence for velocity.
\begin{theorem}(superconvergence).\label{thm:super}
Let $(\bm{\omega},\bm{u})\in [H^1(\Omega)]^{2\times 2}\times [H^1(\Omega)]^2$ be the weak solution of \eqref{firstorder}, $\Delta \bm{u}\in [H^2(\Omega)]^{2}$ and let $\bm{u}_h\in S_h$ be the numerical solution of \eqref{eq:SDG}. Then the following estimate holds
\begin{align*}
\|\mathcal{I}_h\bm{u}-\bm{u}_h\|_0\leq C h^2(\|\bm{u}\|_2+\|\Delta \bm{u}\|_2).
\end{align*}

\end{theorem}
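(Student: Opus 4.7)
The plan is an Aubin--Nitsche duality argument in which the dual Stokes problem is itself discretized by the PR-SDG scheme (so that $\Pi^{RT}$ acts on the test function in the dual source), combined with a stream--function/commuting-diagram trick to recover the extra power of $h$ from the solenoidal part of the data. Introduce the continuous dual problem with data $\bm{e}_{\bm{u}}:=\mathcal{I}_h\bm{u}-\bm{u}_h\in S_h$: find $(\bm{\phi},\xi)\in [H^1_0(\Omega)]^2\times L^2_0(\Omega)$ with $-\nu\Delta\bm{\phi}+\nabla\xi=\bm{e}_{\bm{u}}$, $\nabla\cdot\bm{\phi}=0$, and set $\bm{\theta}:=\nu\nabla\bm{\phi}$; elliptic regularity yields $\nu\|\bm{\phi}\|_2+\|\bm{\theta}\|_1+\|\xi\|_1\le C\|\bm{e}_{\bm{u}}\|_0$. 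Let $(\bm{\theta}_h,\bm{\phi}_h,\xi_h)\in V_h\times S_h\times P_h$ solve the corresponding PR-SDG system; Theorem~\ref{thm:energy} applied to the dual then gives $\|\bm{\theta}-\bm{\theta}_h\|_0\le Ch\|\bm{e}_{\bm{u}}\|_0$.

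Split $\|\bm{e}_{\bm{u}}\|_0^2=(\bm{e}_{\bm{u}},\Pi^{RT}\bm{e}_{\bm{u}})+(\bm{e}_{\bm{u}},\bm{e}_{\bm{u}}-\Pi^{RT}\bm{e}_{\bm{u}})$. By Lemma~\ref{lemma:approximation} together with Theorem~\ref{thm:energy} the second summand is already bounded by $Ch^2(\|\bm{u}\|_2+\|\Delta\bm{u}\|_0)\|\bm{e}_{\bm{u}}\|_0$. For the first summand, test the discrete dual equation with $\bm{v}=\bm{e}_{\bm{u}}$ (the $b_h^*$-term drops by \eqref{adjoint-bh} and \eqref{eq:SDG3}), apply \eqref{adjoint}, and invoke the identity $B_h^*(\bm{e}_{\bm{u}},\bm{\psi})=\nu^{-1}(\bm{\omega}-\bm{\omega}_h,\bm{\psi})$ for $\bm{\psi}\in V_h$ (already derived in the proof of Theorem~\ref{thm:energy}) to obtain $(\bm{e}_{\bm{u}},\Pi^{RT}\bm{e}_{\bm{u}})=\nu^{-1}(\bm{\omega}-\bm{\omega}_h,\bm{\theta}_h)$. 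Writing $\bm{\theta}_h=\bm{\theta}-(\bm{\theta}-\bm{\theta}_h)$, the portion $\nu^{-1}(\bm{\omega}-\bm{\omega}_h,\bm{\theta}-\bm{\theta}_h)$ is immediately $O(h^2)(\|\bm{u}\|_2+\|\Delta\bm{u}\|_0)\|\bm{e}_{\bm{u}}\|_0$ by Cauchy--Schwarz and the primal/dual energy estimates. The remaining portion $\nu^{-1}(\bm{\omega}-\bm{\omega}_h,\bm{\theta})=(\bm{\omega}-\bm{\omega}_h,\nabla\bm{\phi})$ is manipulated using $(\bm{\omega},\nabla\bm{\phi})=(\bm{f},\bm{\phi})$ from the primal weak form together with the discrete identity $(\bm{\omega}_h,\nabla\bm{\phi})=B_h^*(\mathcal{I}_h\bm{\phi},\bm{\omega}_h)=B_h(\bm{\omega}_h,\mathcal{I}_h\bm{\phi})=(\bm{f},\Pi^{RT}\bm{\phi})$, which follows from elementwise IBP on the piecewise-constant $\bm{\omega}_h$, the definition of $\mathcal{I}_h$, the discrete primal equation \eqref{eq:SDG1}, the key fact $\Pi^{RT}\mathcal{I}_h\bm{\phi}=\Pi^{RT}\bm{\phi}$ (inherited from edge normal-average preservation), and vanishing of $b_h(\mathcal{I}_h\bm{\phi},p_h)$ since $\nabla\cdot\bm{\phi}=0$ and $\bm{\phi}\cdot\bm{n}|_{\partial\Omega}=0$. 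Hence $(\bm{\omega}-\bm{\omega}_h,\nabla\bm{\phi})=(\bm{f},\bm{\phi}-\Pi^{RT}\bm{\phi})$.

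The main obstacle is then to show $(\bm{f},\bm{\phi}-\Pi^{RT}\bm{\phi})=O(h^2)$; a direct Cauchy--Schwarz only yields $O(h)$. Invoke the Helmholtz split $\bm{f}=\bm{g}+\lambda\nabla\chi$ from \eqref{eq:decompose-f}. Since $\nabla\cdot\bm{\phi}=\nabla\cdot\Pi^{RT}\bm{\phi}=0$ in $\Omega$ and both $\bm{\phi}\cdot\bm{n}$ and $\Pi^{RT}\bm{\phi}\cdot\bm{n}$ vanish on $\partial\Omega$, the irrotational part drops: $\lambda(\nabla\chi,\bm{\phi}-\Pi^{RT}\bm{\phi})=0$. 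Here $\bm{g}=-\nu\Delta\bm{u}$, so introduce stream functions $\bm{g}=\mathrm{curl}\,\eta$ and $\bm{\phi}=\mathrm{curl}\,\psi$, normalized so that $\psi|_{\partial\Omega}=0$ (permissible since $\bm{\phi}|_{\partial\Omega}=0$ on the simply-connected $\Omega$). The commuting identity \eqref{eq:Ih} delivers $\Pi^{RT}\bm{\phi}=\mathrm{curl}\,\Pi^0\psi$, whence $\bm{\phi}-\Pi^{RT}\bm{\phi}=\mathrm{curl}(\psi-\Pi^0\psi)$; the pointwise identity $\mathrm{curl}\,\eta\cdot\mathrm{curl}\,\zeta=\nabla\eta\cdot\nabla\zeta$ plus Green's formula give
\[
(\bm{g},\bm{\phi}-\Pi^{RT}\bm{\phi})=(\nabla\eta,\nabla(\psi-\Pi^0\psi))=-(\Delta\eta,\psi-\Pi^0\psi),
\]
the boundary term vanishing because the Wachspress property that only the two endpoint coordinates are nonzero along a boundary edge forces $\Pi^0\psi|_{\partial\Omega}=0$ once $\psi|_{\partial\Omega}=0$. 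Finally $\Delta\eta=\mathrm{rot}\,\bm{g}=-\nu\,\mathrm{rot}\,\Delta\bm{u}$ so $\|\Delta\eta\|_0\le C\nu\|\Delta\bm{u}\|_2$, while the standard $\|\psi-\Pi^0\psi\|_0\le Ch^2\|\psi\|_2$ together with $\|\psi\|_2\le C\|\bm{\phi}\|_1\le C\nu^{-1}\|\bm{e}_{\bm{u}}\|_0$ produces the $Ch^2\|\Delta\bm{u}\|_2\|\bm{e}_{\bm{u}}\|_0$ bound. Assembling all pieces and dividing by $\|\bm{e}_{\bm{u}}\|_0$ yields the claimed $O(h^2)$ estimate.
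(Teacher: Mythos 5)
Your argument is correct and shares the paper's overall architecture: Aubin--Nitsche duality with the dual Stokes problem discretized by the same PR-SDG scheme, the split of the error into its $\Pi^{RT}$ image and the complement, the key fact $\Pi^{RT}\mathcal{I}_h\bm{\phi}=\Pi^{RT}\bm{\phi}$, and the commuting property \eqref{eq:Ih} to write $\bm{\phi}-\Pi^{RT}\bm{\phi}=\text{curl}(\psi-\Pi^0\psi)$. It differs in two ways worth recording. First, the intermediate algebra is reorganized: by splitting $\bm{\theta}_h=\bm{\theta}-(\bm{\theta}-\bm{\theta}_h)$ and combining the continuous primal identity $(\bm{\omega},\nabla\bm{\phi})=(\bm{f},\bm{\phi})$ with the elementwise computation $(\bm{\omega}_h,\nabla\bm{\phi})=B_h(\bm{\omega}_h,\mathcal{I}_h\bm{\phi})=(\bm{f},\Pi^{RT}\bm{\phi})$, you reach $(\bm{\omega}-\bm{\omega}_h,\nabla\bm{\phi})=(\bm{f},\bm{\phi}-\Pi^{RT}\bm{\phi})$ in one stroke, whereas the paper threads through the error equation \eqref{eq:error1} and the pairing $B_h(\bm{\omega}_{r,h}-\mathcal{J}_h\bm{\omega}_r,\mathcal{I}_h\bm{u}-\bm{u}_h)$, which requires the additional dual estimate \eqref{eq:estimate-dual1}; both routes close with the same $O(h^2)$ bounds. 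Second, and more substantively, for the critical term $\nu(\Delta\bm{u},\bm{\phi}-\Pi^{RT}\bm{\phi})$ the paper subtracts $\pi_h\Delta\bm{u}$, integrates by parts element by element, and controls the edge terms via the $L^\infty(\partial T)\times L^1(\partial T)$ pairing and the generalized barycentric interpolation estimate (this is what consumes the $\Delta\bm{u}\in H^2$ hypothesis), while you introduce a second stream function $\eta$ with $\text{curl}\,\eta=-\nu\Delta\bm{u}$, use $\text{curl}\,\eta\cdot\text{curl}\,\zeta=\nabla\eta\cdot\nabla\zeta$, and perform a single global integration by parts onto $-(\Delta\eta,\psi-\Pi^0\psi)$; this needs only $\text{rot}\,\Delta\bm{u}\in L^2$ together with the global $H^1$-conformity of $\Pi^0\psi$ and its vanishing on $\partial\Omega$, both of which hold for Wachspress interpolation because the trace along each polygon edge is the linear interpolant of the endpoint values. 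Your version thus delivers the result under slightly weaker regularity. Two small points to state explicitly: the irrotational part of $\bm{f}$ should be removed directly from $\bm{f}=-\nu\Delta\bm{u}+\nabla p$ (the gradient part is annihilated against the divergence-free, normal-trace-free field $\bm{\phi}-\Pi^{RT}\bm{\phi}$), rather than by asserting that the $\bm{g}$ of \eqref{eq:decompose-f} equals $-\nu\Delta\bm{u}$, which is true only up to a harmonic gradient; and the existence of $\eta$ uses simple connectivity of $\Omega$, an assumption the paper's own proof also makes tacitly for $\sigma_r$.
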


\begin{proof}

Given a right hand side $\bm{r}\in [L^2(\Omega)]^2$, let $(\bm{\omega}_r,\bm{u}_r,p_r)$ denote the solution of
\begin{align}
\nabla \cdot \bm{\omega}_r+\nabla p_r&=\bm{r} \hspace{1.2cm} \mbox{in}\;\Omega\label{eq:dual1},\\
\bm{\omega}_r&=-\nu \nabla \bm{u}_r  \quad \mbox{in}\;\Omega,\label{eq:dual2}\\
\nabla\cdot \bm{u}_r & =0 \hspace{1.3cm}\mbox{in}\;\Omega,\\
\bm{u}_r &=\bm{0}\hspace{1.3cm}\mbox{on}\;\partial \Omega\label{eq:dual14}
\end{align}
and let $(\bm{\omega}_{r,h},\bm{u}_{r,h},p_{r,h})\in V_h\times S_h\times P_h$ denote the solution of
\begin{align}
-B_h(\bm{\omega}_{r,h},\bm{v})+b_h^*(p_{r,h},\bm{v})&=(\bm{r}, \Pi^{RT}\bm{v})\qquad \forall \bm{v}\in S_h\label{eq:discretedual1},\\
-B_h^*(\bm{u}_{r,h}, \bm{\psi})&=\nu^{-1}(\bm{\omega}_{r,h},\bm{\psi})\quad \forall \bm{\psi}\in V_h,\\
b_h(\bm{u}_{r,h},q)=&0\hspace{2.5cm} \forall q\in P_h.\label{eq:dual-discrete3}
\end{align}
We obtain from classical regularity results for the incompressible Stokes equations (cf. \cite{Girault86}) that
\begin{equation}
\begin{split}
\nu \|\bm{u}_r\|_2&\leq C \|\bm{r}\|_0,\\
\|\bm{\omega}_r\|_1&\leq C \|\bm{r}\|_0.
\end{split}
\label{eq:regularity}
\end{equation}
We can now apply a duality argument to prove the superconvergence. We have
\begin{align*}
\|\mathcal{I}_h\bm{u}-\bm{u}_h\|_0=\sup_{\bm{r}\in [L^2(\Omega)]^2\backslash\{\bm{0}\}}\frac{(\bm{r},\mathcal{I}_h\bm{u}-\bm{u}_h)}{\|\bm{r}\|_0}
\end{align*}
%
and
\begin{align}
(\bm{r},\mathcal{I}_h\bm{u}-\bm{u}_h)&=(\bm{r}, \mathcal{I}_h\bm{u}-\bm{u}_h-\Pi^{RT}(\mathcal{I}_h\bm{u}-\bm{u}_h))
+(\bm{r},\Pi^{RT}(\mathcal{I}_h\bm{u}-\bm{u}_h)).\label{eq:rdecomposition}
\end{align}
The first term on the right hand side can be estimated by Lemma~\ref{lemma:approximation} and \eqref{eq:uz}
\begin{align*}
(\bm{r}, \mathcal{I}_h\bm{u}-\bm{u}_h-\Pi^{RT}(\mathcal{I}_h\bm{u}-\bm{u}_h))\leq Ch \|\bm{r}\|_0\|\mathcal{I}_h\bm{u}-\bm{u}_h\|_h\leq C h^2\|\bm{r}\|_0( \|\Delta \bm{u}\|_0+\|\bm{u}\|_2).
\end{align*}
It remains to estimate the second term of \eqref{eq:rdecomposition}. We have from \eqref{eq:dual2}, \eqref{eq:discretedual1}, integration by parts and the fact that $b_h^*(p_{r,h},\mathcal{I}_h\bm{u}-\bm{u}_h)=0$
\begin{equation}
\begin{split}
(\bm{r}, \Pi^{RT}(\mathcal{I}_h\bm{u}-\bm{u}_h))& = -B_h(\bm{\omega}_{r,h}, \mathcal{I}_h\bm{u}-\bm{u}_h)+\nu^{-1}(\bm{\omega}_r, \bm{\omega}-\bm{\omega}_h)+(\nabla \bm{u}_r, \bm{\omega}-\bm{\omega}_h)\\
&=-B_h(\bm{\omega}_{r,h}, \mathcal{I}_h\bm{u}-\bm{u}_h)+\nu^{-1}(\bm{\omega}_r, \bm{\omega}-\bm{\omega}_h)+(\nabla (\bm{u}_r-\mathcal{I}_h\bm{u}_r), \bm{\omega}-\bm{\omega}_h)\\
&=-B_h(\bm{\omega}_{r,h}, \mathcal{I}_h\bm{u}-\bm{u}_h)+\nu^{-1}(\bm{\omega}_r, \bm{\omega}-\bm{\omega}_h)+\sum_{e\in \mathcal{F}_p}((\bm{\omega}-\bm{\omega}_h)\bm{n}, [\bm{u}_r-\mathcal{I}_h\bm{u}_r])_e\\
&\;+\sum_{e\in \mathcal{F}_u}([(\bm{\omega}-\bm{\omega}_h)\bm{n}], \bm{u}_r-\mathcal{I}_h\bm{u}_r)_e
-(\bm{u}_r-\mathcal{I}_h\bm{u}_r, \nabla\cdot \bm{\omega})\\
&=-B_h(\bm{\omega}_{r,h}, \mathcal{I}_h\bm{u}-\bm{u}_h)+\nu^{-1}(\bm{\omega}_r, \bm{\omega}-\bm{\omega}_h)\\
&\;+B_h(\bm{\omega}-\bm{\omega}_h, \mathcal{I}_h\bm{u}_r)-(\bm{u}_r-\mathcal{I}_h\bm{u}_r, \nabla\cdot \bm{\omega}).
\end{split}
\label{eq:rnorm}
\end{equation}
It follows from \eqref{eq:SDG2} and \eqref{eq:error1} by taking $\bm{\psi}=\mathcal{J}_h\bm{\omega}_r$ and $\bm{v}=\mathcal{I}_h\bm{u}_r$, respectively
\begin{align*}
B_h^*(\mathcal{I}_h\bm{u}-\bm{u}_h, \mathcal{J}_h\bm{\omega}_r)&=\nu^{-1}(\bm{\omega}-\bm{\omega}_h, \mathcal{J}_h\bm{\omega}_r),\\
B_h(\bm{\omega}-\bm{\omega}_h, \mathcal{I}_h\bm{u}_r) &=\nu (\Delta \bm{u} ,\Pi^{RT}\mathcal{I}_h\bm{u}_r-\mathcal{I}_h\bm{u}_r),
\end{align*}
where we use $b_h^*(\pi_h p-p_h, \mathcal{I}_h\bm{u}_r)=0$ in the second equality.

Therefore, we can recast \eqref{eq:rnorm} into the following form
\begin{align*}
(\bm{r}, \Pi^{RT}(\mathcal{I}_h\bm{u}-\bm{u}_h))&=-B_h(\bm{\omega}_{r,h}-\mathcal{J}_h\bm{\omega}_r, \mathcal{I}_h\bm{u}-\bm{u}_h)+\nu^{-1}(\bm{\omega}_r-\mathcal{J}_h\bm{\omega}_r, \bm{\omega}-\bm{\omega}_h)\\
&\;+\nu(\Delta \bm{u}, \Pi^{RT}\mathcal{I}_h\bm{u}_r-\mathcal{I}_h\bm{u}_r)-(\bm{u}_r-\mathcal{I}_h\bm{u}_r, \nabla\cdot \bm{\omega})\\
&=-B_h(\bm{\omega}_{r,h}-\mathcal{J}_h\bm{\omega}_r, \mathcal{I}_h\bm{u}-\bm{u}_h)+\nu^{-1}(\bm{\omega}_r-\mathcal{J}_h\bm{\omega}_r, \bm{\omega}-\bm{\omega}_h)\\
&\;+\nu(\Delta \bm{u}, \Pi^{RT}\mathcal{I}_h\bm{u}_r-\mathcal{I}_h\bm{u}_r)-\nu(\Delta \bm{u}, \bm{u}_r-\mathcal{I}_h\bm{u}_r)\\
&=-B_h(\bm{\omega}_{r,h}-\mathcal{J}_h\bm{\omega}_r, \mathcal{I}_h\bm{u}-\bm{u}_h)+\nu^{-1}(\bm{\omega}_r-\mathcal{J}_h\bm{\omega}_r, \bm{\omega}-\bm{\omega}_h)+\nu(\Delta \bm{u}, \Pi^{RT}\mathcal{I}_h\bm{u}_r-\bm{u}_r)\\
&:=\sum_{i=1}^3 I_i.
\end{align*}
First we can obtain the following estimate by proceeding analogously to Theorem~\ref{thm:energy} for the dual problem \eqref{eq:dual1}-\eqref{eq:dual-discrete3}
\begin{align}
\|\bm{\omega}_{r,h}-\mathcal{J}_h\bm{\omega}_r\|_0\leq C\Big(\|\bm{\omega}_r-\mathcal{J}_h\bm{\omega}_r\|_0+\nu h\|\Delta \bm{u}_r\|_0\Big)\leq C h\Big(\|\bm{\omega}_r\|_1+\nu\|\Delta \bm{u}_r\|_0\Big)\leq C h\|\bm{r}\|_0.\label{eq:estimate-dual1}
\end{align}
Then $I_1$ can be estimated by the Cauchy-Schwarz inequality, \eqref{eq:uz} and \eqref{eq:estimate-dual1}
\begin{align*}
|I_1|\leq C \|\bm{\omega}_{r,h}-\mathcal{J}_h\bm{\omega}_r\|_0\|\mathcal{I}_h\bm{u}-\bm{u}_h\|_h\leq h^2\|\bm{r}\|_0(\|\Delta \bm{u}\|_0+\|\bm{u}\|_2).
\end{align*}
We can bound $I_2$ by the Cauchy-Schwarz inequality and Theorem~\ref{thm:energy}
\begin{align*}
|I_2|\leq \nu^{-1}\|\bm{\omega}_r-\mathcal{J}_h\bm{\omega}_r\|_0\|\bm{\omega}-\bm{\omega}_h\|_0\leq C\nu^{-1} h^2\|\bm{w}_r\|_1(\nu \|\Delta \bm{u}\|_0+\|\bm{\omega}\|_1)\leq Ch^2\|\bm{r}\|_0(\|\Delta \bm{u}\|_0+\nu^{-1}\|\bm{\omega}\|_1).
\end{align*}
For an arbitrary $T\in \mathcal{T}_u$ with $m$ edges, we have from the definitions of $\Pi^{RT}$ and $\mathcal{I}_h$
\begin{align*}
\Pi^{RT}\mathcal{I}_h\bm{u}_r = \sum_{i=1}^m (\frac{1}{|e|}\int_e \mathcal{I}_h\bm{u}_r\cdot\bm{n}\;ds)\bm{\varphi}_i=\sum_{i=1}^m (\frac{1}{|e|}\int_e \bm{u}_r\cdot\bm{n}\;ds)\bm{\varphi}_i.
\end{align*}
Thus, we can conclude that $\Pi^{RT}\mathcal{I}_h\bm{u}_r=\Pi^{RT}\bm{u}_r$. Thereby we can rewrite $I_3$ as
\begin{align*}
I_3&=\nu(\Delta \bm{u}, \Pi^{RT}\mathcal{I}_h\bm{u}_r-\bm{u}_r)=\nu(\Delta \bm{u}, \Pi^{RT}\bm{u}_r-\bm{u}_r)\\
&=\nu(\Delta \bm{u}-\pi_h\Delta \bm{u}, \Pi^{RT}\bm{u}_r-\bm{u}_r)+\nu(\pi_h\Delta \bm{u}, \Pi^{RT}\bm{u}_r-\bm{u}_r).
\end{align*}
For the first summand, Lemma~\ref{lemma:Hdiv} and \eqref{eq:regularity} give the desired bound
\begin{align*}
\nu(\Delta \bm{u}-\pi_h\Delta \bm{u}, \Pi^{RT}\bm{u}_r-\bm{u}_r)&\leq \nu \|\Delta \bm{u}-\pi_h\Delta \bm{u}\|_0\|\Pi^{RT}\bm{u}_r-\bm{u}_r\|_0\\
&\leq C\nu h^2 \|\Delta \bm{u}\|_1\|\bm{u}_r\|_1\leq C  h^2 \|\Delta \bm{u}\|_1\|\bm{r}\|_0.
\end{align*}

The bound for the second summand is inspired by the work given in \cite{LinkeMerdon17}. First, we notice that $\nabla \cdot \bm{u}_r=0$, hence there exists a function $\sigma_r$ such that $\text{curl}\,\sigma_r=\bm{u}_r$. Further, since $\bm{u}_r\in [H^1(\Omega)]^2$, it holds $\nu\|\sigma_r\|_2\leq C \|\bm{r}\|_0$, see, e.g., \cite{Hiptmair02}. Furthermore, there exists a space $\widetilde{S}_h$ consisting of piecewise linear polynomials and recall that $\Pi^0$ is the nodal value interpolation operator based on the generalized barycentric coordinates (cf. Lemma~\ref{lemma:Hdiv}). Since $\Pi^{RT}$ satisfies \eqref{eq:Ih}, it holds $\bm{u}_r-\Pi^{RT}\bm{u}_r=\text{curl}(\sigma_r-\Pi^0\sigma_r)$
\begin{align*}
\nu(\pi_h \Delta\bm{u},\bm{u}_r-\Pi^{RT}\bm{u}_r)&=\nu\sum_{T\in \mathcal{T}_u}(\pi_h \Delta\bm{u},\text{curl}(\sigma_r-\Pi^0\sigma_r) )_T\\
&\leq |\sum_{T\in \mathcal{T}_u}\nu(\text{curl}(\pi_h\Delta \bm{u}), \sigma_r-\Pi^0\sigma_r)_T|+\nu|\sum_{T\in \mathcal{T}_u}(\pi_h \Delta\bm{u}\cdot\bm{t}, \sigma_r-\Pi^0\sigma_r)_{\partial T}|
\end{align*}
The first term on the right hand side vanishes since $\pi_h \Delta \bm{u}$ is piecewise constant. We can bound the second term by the error estimates on convex polygon based on the generalized barycentric coordinates (cf. \cite{Gillette12})
\begin{align*}
|(\pi_h \Delta\bm{u}\cdot\bm{t}, \sigma_r-\Pi^0\sigma_r)_{\partial T}|&\leq \|\pi_h \Delta\bm{u}\|_{L^\infty(\partial T)}\|\sigma_r-\Pi^0\sigma_r\|_{L^1(\partial T)}\leq C h_T^2\|\Delta\bm{u}\|_{L^\infty(\partial T)}\|\sigma_r\|_{2,T}.
\end{align*}
Thus
\begin{align*}
\nu(\pi_h \Delta\bm{u},\bm{u}_r-\Pi^{RT}\bm{u}_r)\leq C \nu h^2\sum_{T\in \mathcal{T}_u}\|\Delta \bm{u}\|_{2,T}\|\sigma_r\|_{2,T}\leq C h^2\|\Delta\bm{u}\|_2\|\bm{r}\|_0.
\end{align*}
Combining the preceding arguments, we can achieve the desired estimate.

\end{proof}

\begin{remark}
The derivation of superconvergence for lowest order SDG method is non-trivial. In general we need to invoke some nonstandard trace inequalities in order to deliver the desirable results, see \cite{LinaParkShin19}. Here, we are able to achieve the superconvergence without resorting to nonstandard trace inequality and the key idea lies in the use of the divergence preserving operator in the discrete formulation of the dual problem as well as the integration of continuous formulation and discrete formulation in \eqref{eq:rnorm}. We emphasize that the sole application of continuous formulation (cf. \eqref{eq:dual1}-\eqref{eq:dual14}) or discrete formulation (cf. \eqref{eq:discretedual1}-\eqref{eq:dual-discrete3}) can not deliver desirable result. Importantly, our analysis will provide new perspectives for the proof of superconvergence for other discretizations as well.

\end{remark}

\begin{remark}\label{remark:con}
We can obtain the following estimate by using the formulation given in \cite{LinaParkShin19}
\begin{align*}
\|\bm{\omega}-\bm{\omega}_h\|_0&\leq C h(\nu\|\bm{u}\|_2+\|p\|_1),\\
\|\mathcal{I}_h\bm{u}-\bm{u}_h\|_h&\leq Ch( \|\bm{u}\|_2+\frac{1}{\nu}\|p\|_1),\\
\|p-p_h\|_0&\leq C  h(\nu\|\bm{u}\|_2+\|p\|_1).
\end{align*}
Then the discrete Poincar\'{e} inequality (cf. \cite{Brenner03}) yields
\begin{align*}
\|\mathcal{I}_h\bm{u}-\bm{u}_h\|_0\leq C \|\mathcal{I}_h\bm{u}-\bm{u}_h\|_h\leq Ch( \|\bm{u}\|_2+\frac{1}{\nu}\|p\|_1).
\end{align*}
We can observe from the above estimates that the velocity error will grow unboundedly as $\nu\rightarrow 0$, and therefore the formulation from \cite{LinaParkShin19} generates unsatisfactory solution as $\nu\rightarrow 0$.

\end{remark}

\section{Numerical experiments}\label{sec:numerical}

In this section several numerical tests will be tested to confirm the proposed theories. In the following tests, we will employ three types of meshes: Unstructured triangular mesh, trapezoidal mesh and polygonal mesh shown in Figure~\ref{mesh}. The accuracy and robustness of the proposed method will be investigated. For the sake of simplicity we denote the present formulation (cf. \eqref{eq:SDG}) as SDG1 and the formulation obtained from \cite{LinaParkShin19} as SDG2.

\begin{figure}[t]
    \centering
    \includegraphics[width=0.32\textwidth]{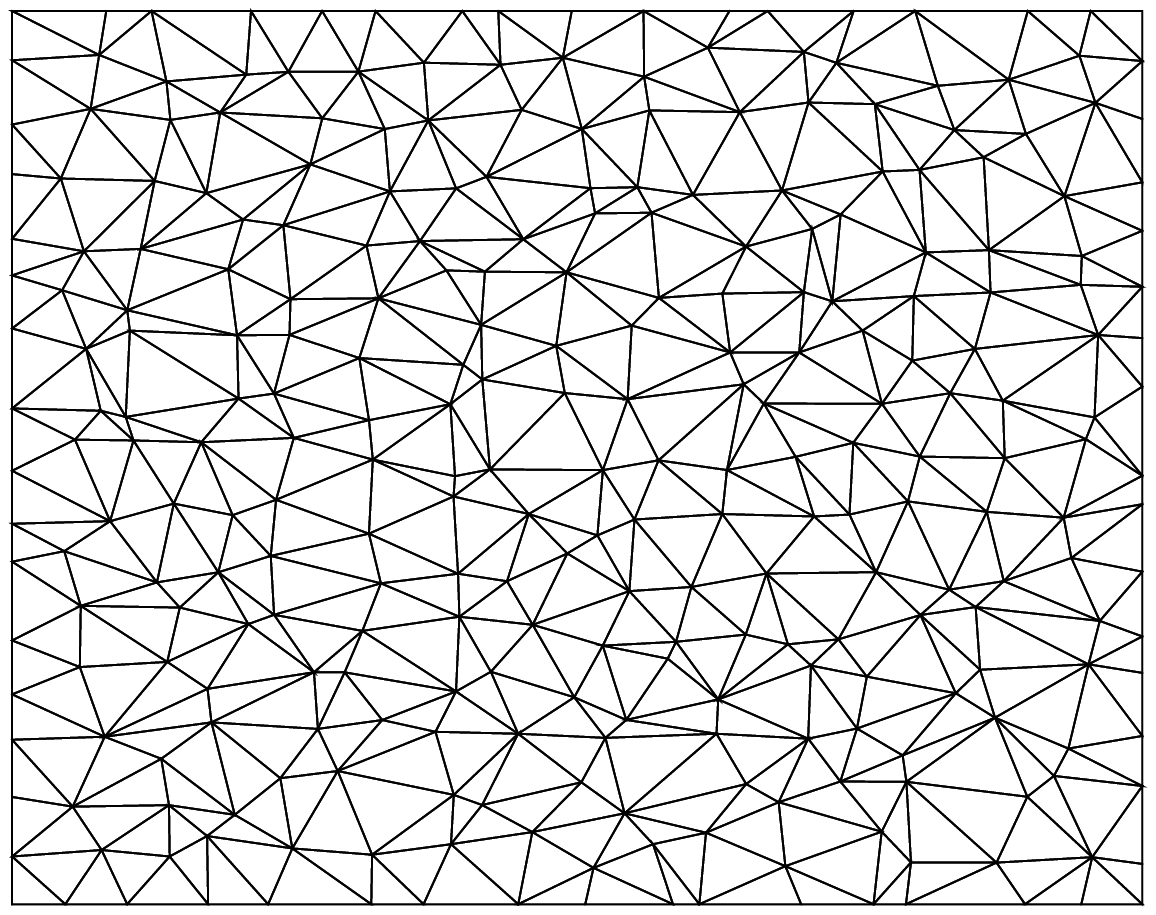}
    \includegraphics[width=0.32\textwidth]{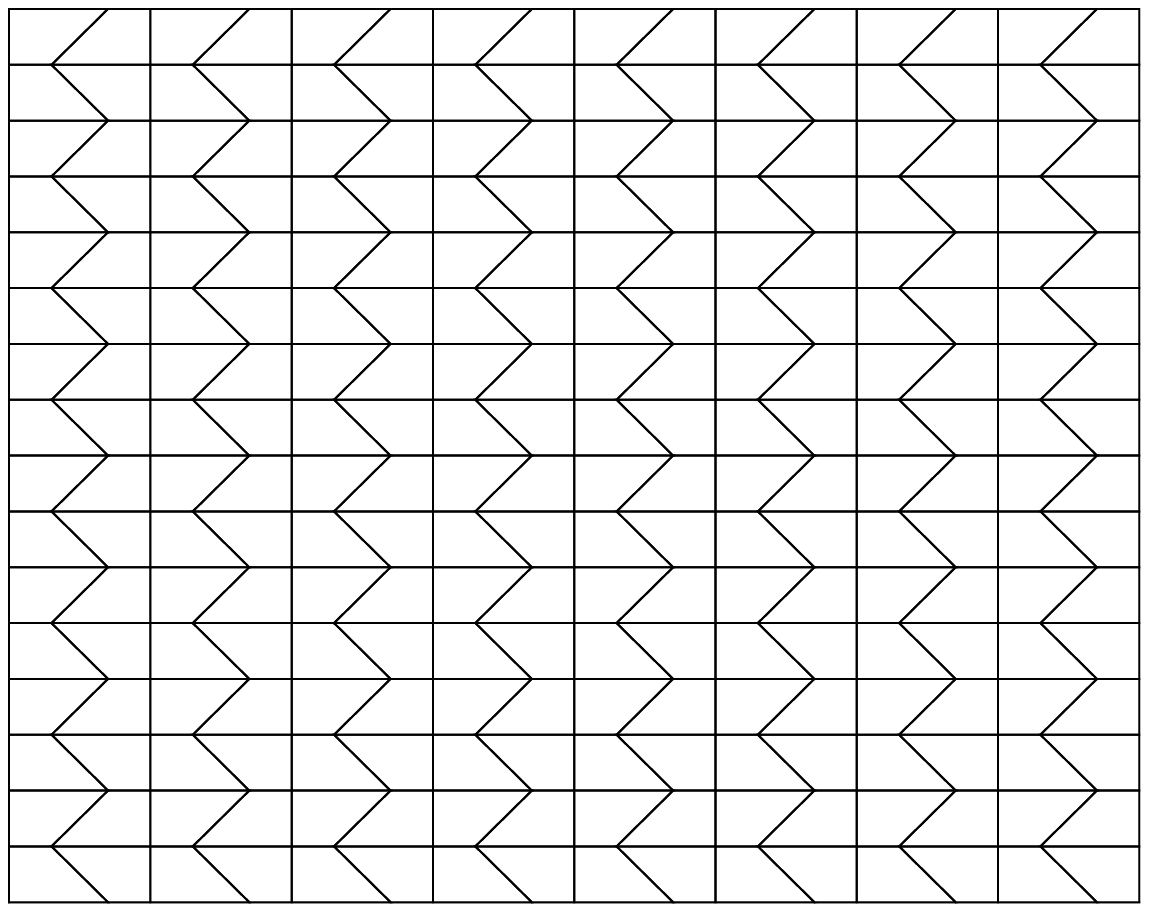}
     \includegraphics[width=0.32\textwidth]{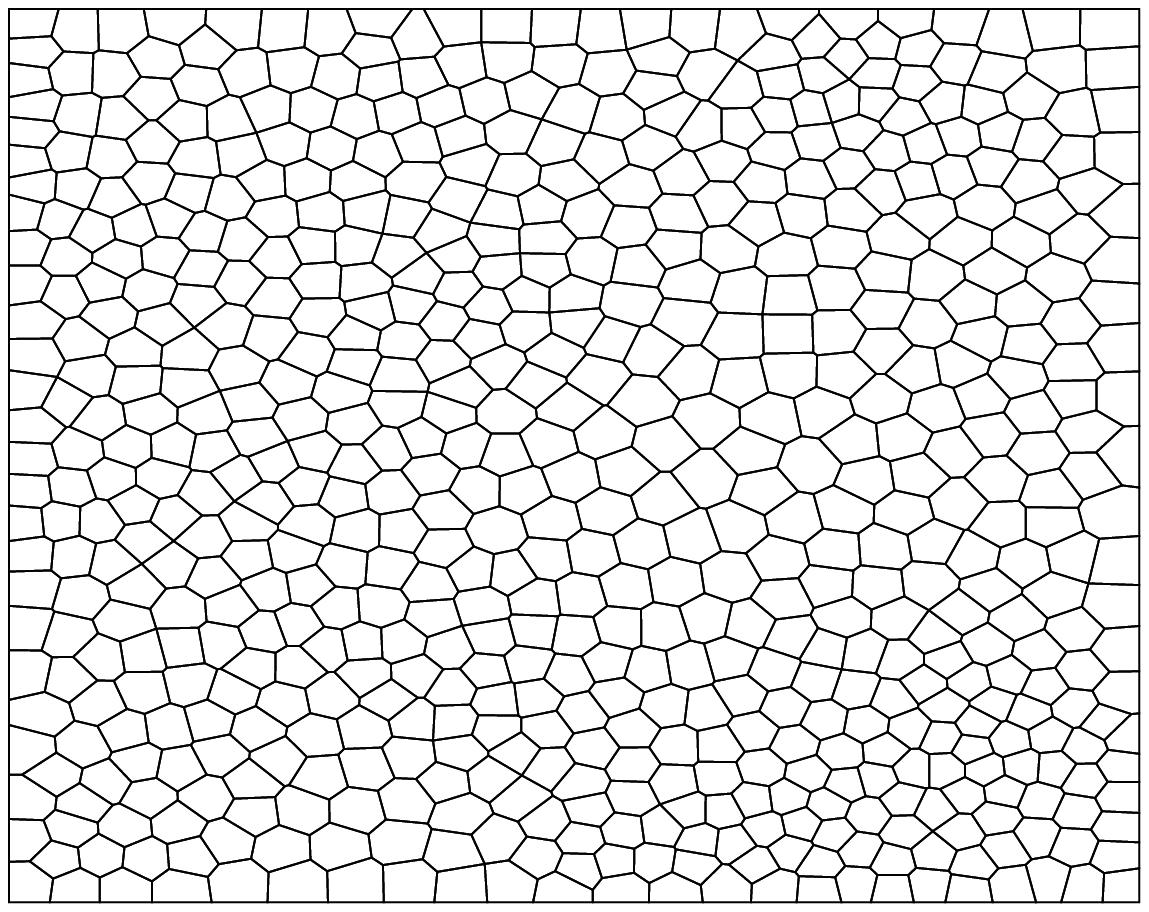}
    \caption{Three types of meshes used in numerical experiments: Unstructured triangular mesh (left), trapezoidal mesh (middle), and polygonal mesh (right).}
    \label{mesh}
\end{figure}

\subsection{Unstructured triangular mesh}\label{ex:tri}

\subsubsection{Accuracy and robustness test}\label{ex1:first}

Let $\Omega=(0,1)^2$ and let the exact solution be given by
\begin{align}
\bm{u}=\left(
         \begin{array}{c}
           \pi x^2(1-x)^2\sin(2\pi y) + 1 \\
           -2x(1-x)(1-2x)\sin(\pi y)^2 + 1 \\
         \end{array}
       \right),\quad p= \sin(x)\cos(y)+(\cos(1)-1) \sin(1).\label{eq:exact1}
\end{align}
We use unstructured triangular meshes in this section for the numerical simulation, see Figure~\ref{mesh}. The convergence history against the mesh size for $\nu=1$ are plotted in Figure~\ref{ex1:con}. We can observe that first order convergence can be achieved in $L^2$ errors of $\bm{u},p$ and $\bm{\omega}$ for both algorithms; in addition, second order convergence can be obtained for $\|\mathcal{I}_h\bm{u}-\bm{u}_h\|_0$, which confirms the theoretical results presented in Theorem~\ref{thm:energy} and Theorem~\ref{thm:super}.

Then we perform simulations to test the robustness of our method. To this end, we fix $h=1/16$ and choose $\nu=10^2,10,1,10^{-1},10^{-2},10^{-3},10^{-4},10^{-5},10^{-6}$. The right hand side can be calculated by $\bm{f}=-\nu \Delta \bm{u}+\nabla p$. $L^2$ errors for velocity, pressure and velocity gradient are reported in Figure~\ref{ex1:accuracy}. The velocity error deteriorates for $\nu\rightarrow 0$ and is asymptotically proportional to $1/\nu$ (when $\nu \leq 1$) as predicted by the theory of SDG2 (cf. Remark~\ref{remark:con}), which indicates that SDG2 is not pressure robust. On the contrary, the velocity error of SDG1 remains a constant for various values of $\nu$, which validates the independence of velocity on the pressure variable. In addition, we can observe similar performances for pressure error from SDG1 and SDG2. Moreover, the $L^2$ error of velocity gradient from SDG1 is asymptotically proportional to $\nu$, whereas, the $L^2$ error of velocity gradient from SDG2 tends to be a constant when $\nu\leq 1$, which is consistent with the theories given in Theorem~\ref{thm:energy} and Remark~\ref{remark:con}.

Finally, we display the numerical approximations for $\nu=10^{-6}$ for both algorithms in Figure~\ref{ex1:numerical-sol}. It is easy to see that SDG1 yields correct numerical approximation for velocity while SDG2 yields wrong numerical approximation for velocity. Numerical approximation for pressure is correct for both algorithms. To further verify the robustness of SDG1, we show the convergence history for $\nu=10^{-6}$, and we can observe first order convergence in $L^2$ errors of velocity, pressure and velocity gradient; in addition, superconvergence can be obtained for $\|\mathcal{I}_h\bm{u}-\bm{u}_h\|_0$.

\begin{figure}[t]
    \centering
    \includegraphics[width=0.35\textwidth]{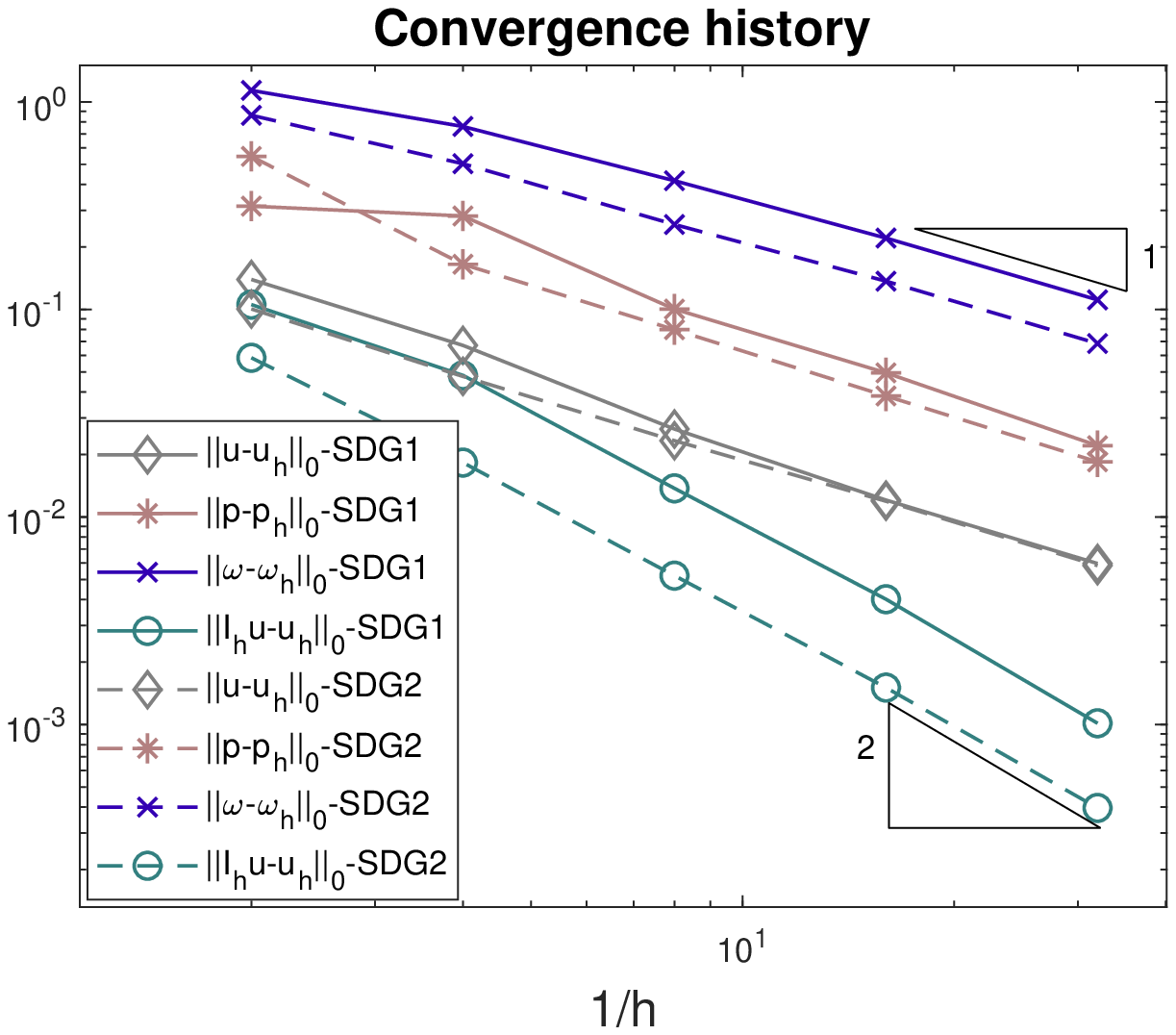}
     \includegraphics[width=0.35\textwidth]{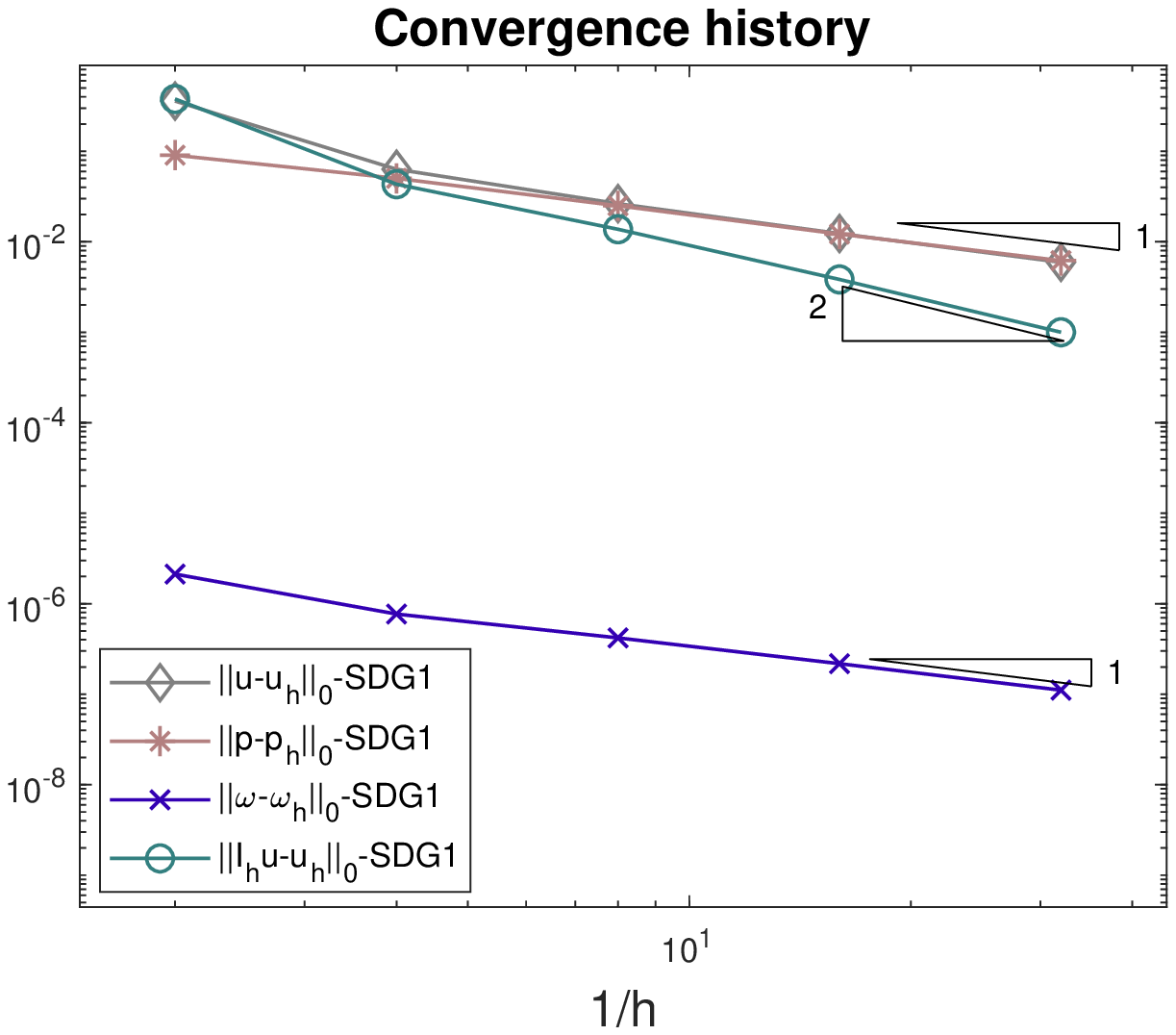}
    \caption{Example~\ref{ex1:first}: Convergence history for $\nu=1$ (left) and $\nu=10^{-6}$ (right).}
    \label{ex1:con}
\end{figure}

\begin{figure}[t]
    \centering
    \includegraphics[width=0.32\textwidth]{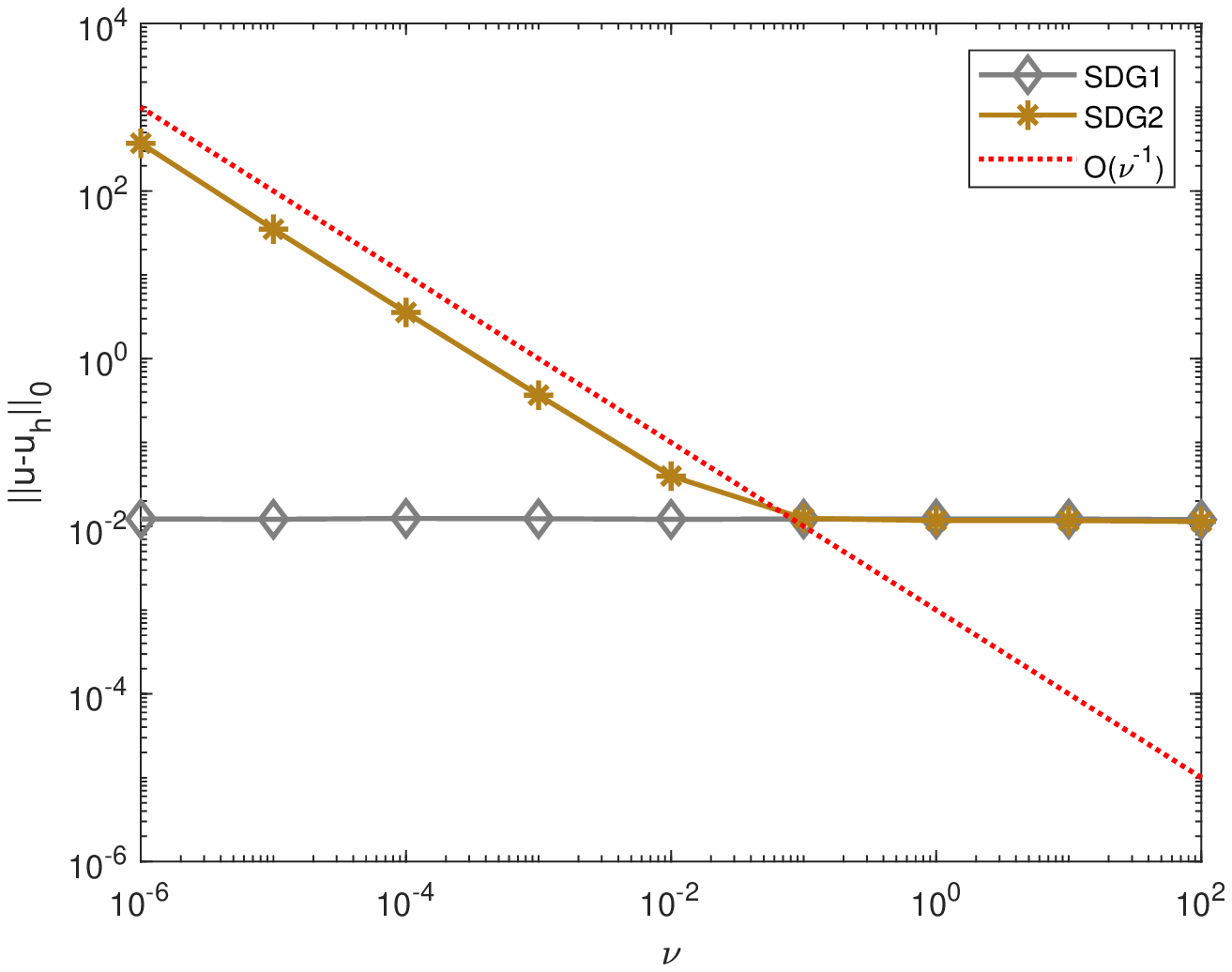}
    \includegraphics[width=0.32\textwidth]{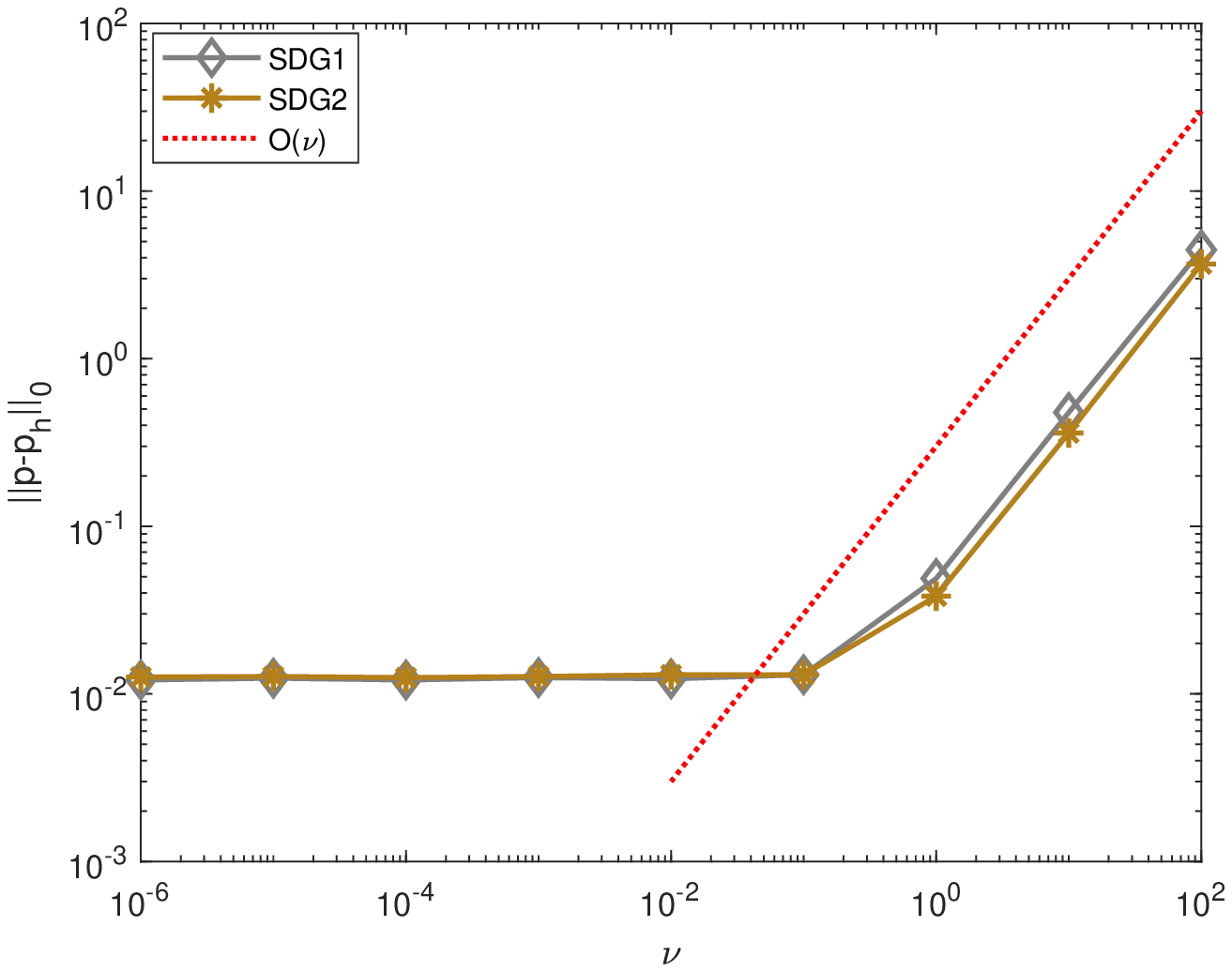}
     \includegraphics[width=0.32\textwidth]{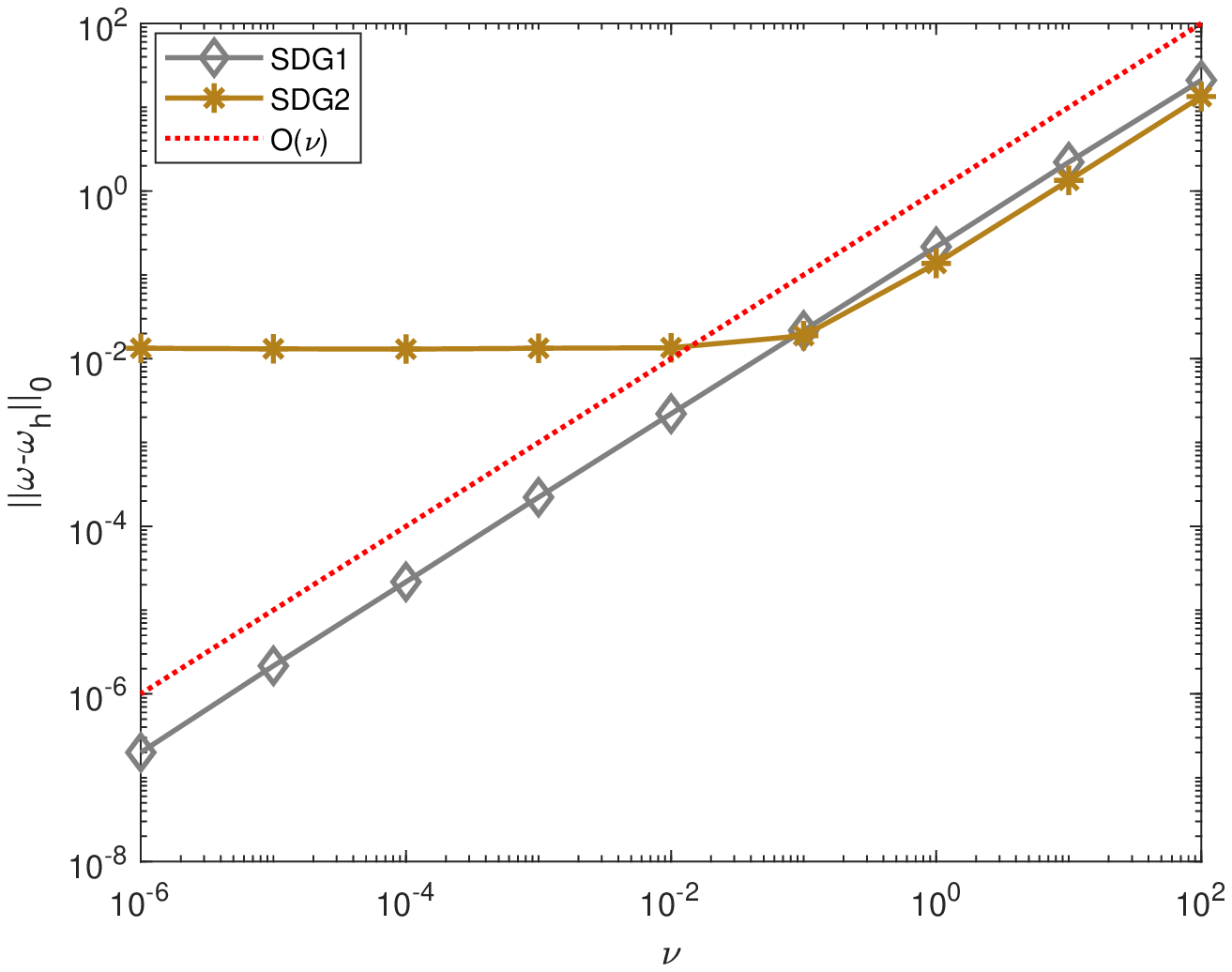}
    \caption{Example~\ref{ex1:first}: Error profiles for velocity (left), pressure (middle) and velocity gradient (right) on unstructured triangular mesh with $h=1/16$.}
    \label{ex1:accuracy}
\end{figure}

\begin{figure}[t]
    \centering
    \includegraphics[width=0.32\textwidth]{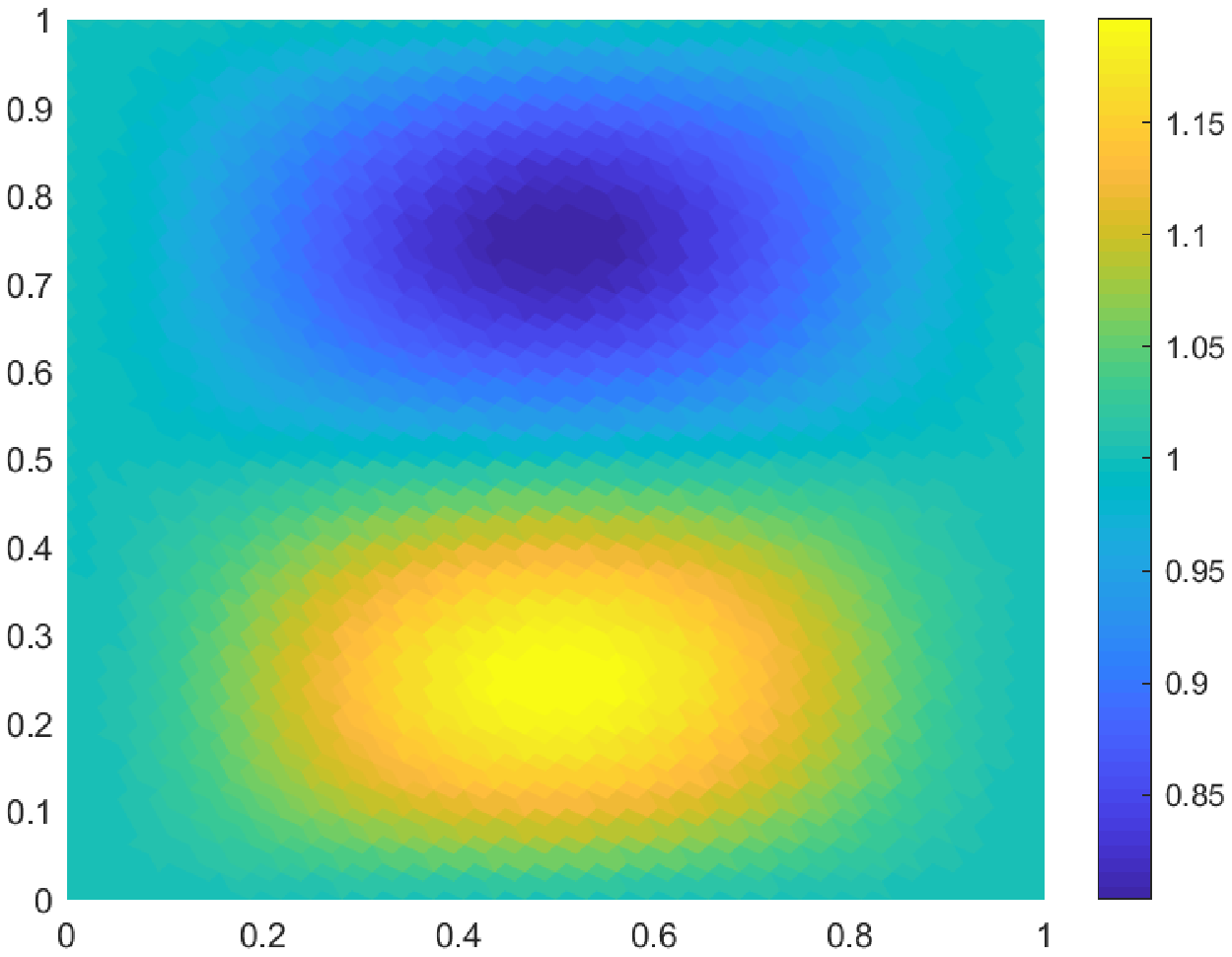}
    \includegraphics[width=0.32\textwidth]{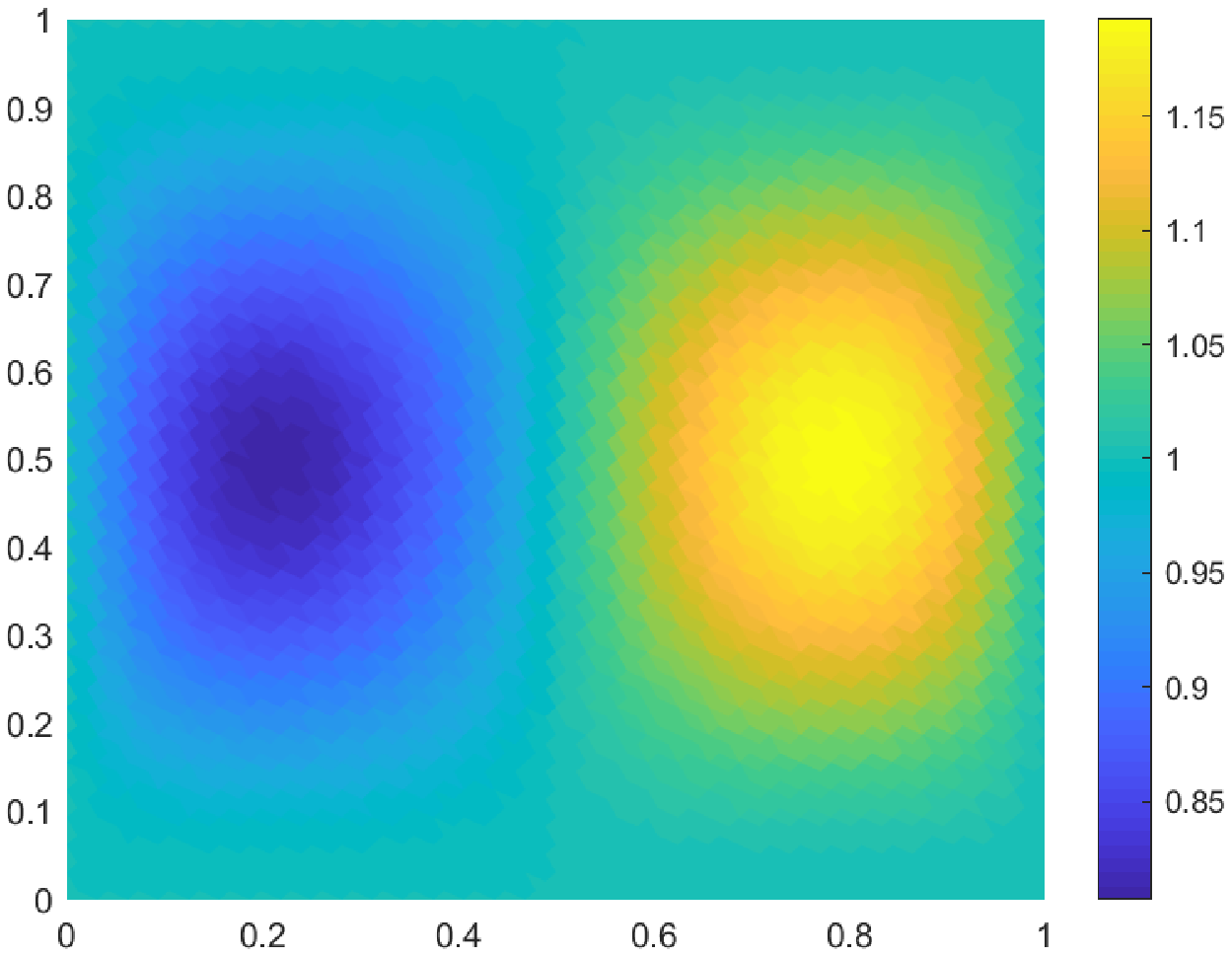}
    \includegraphics[width=0.32\textwidth]{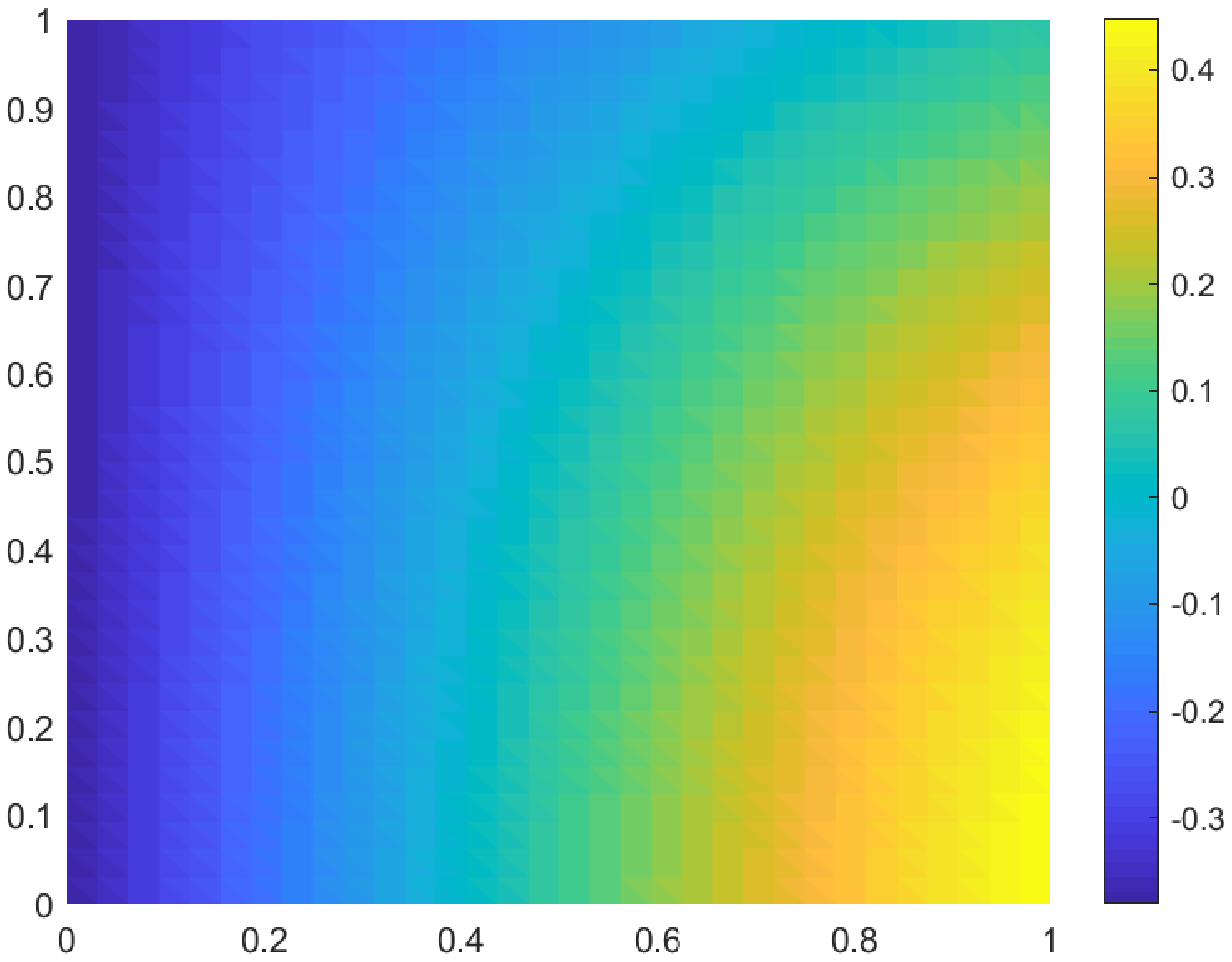}\\
    \includegraphics[width=0.32\textwidth]{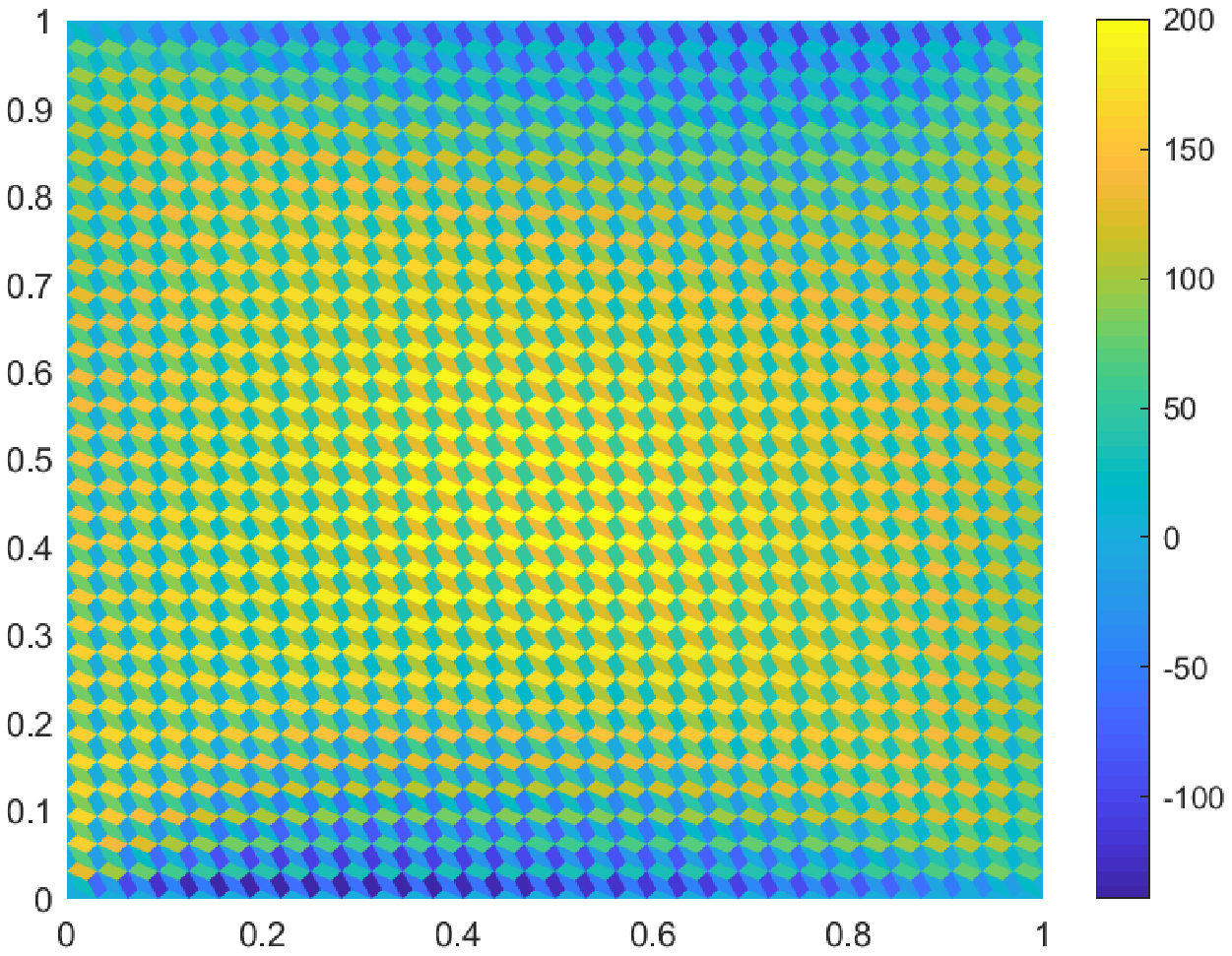}
    \includegraphics[width=0.32\textwidth]{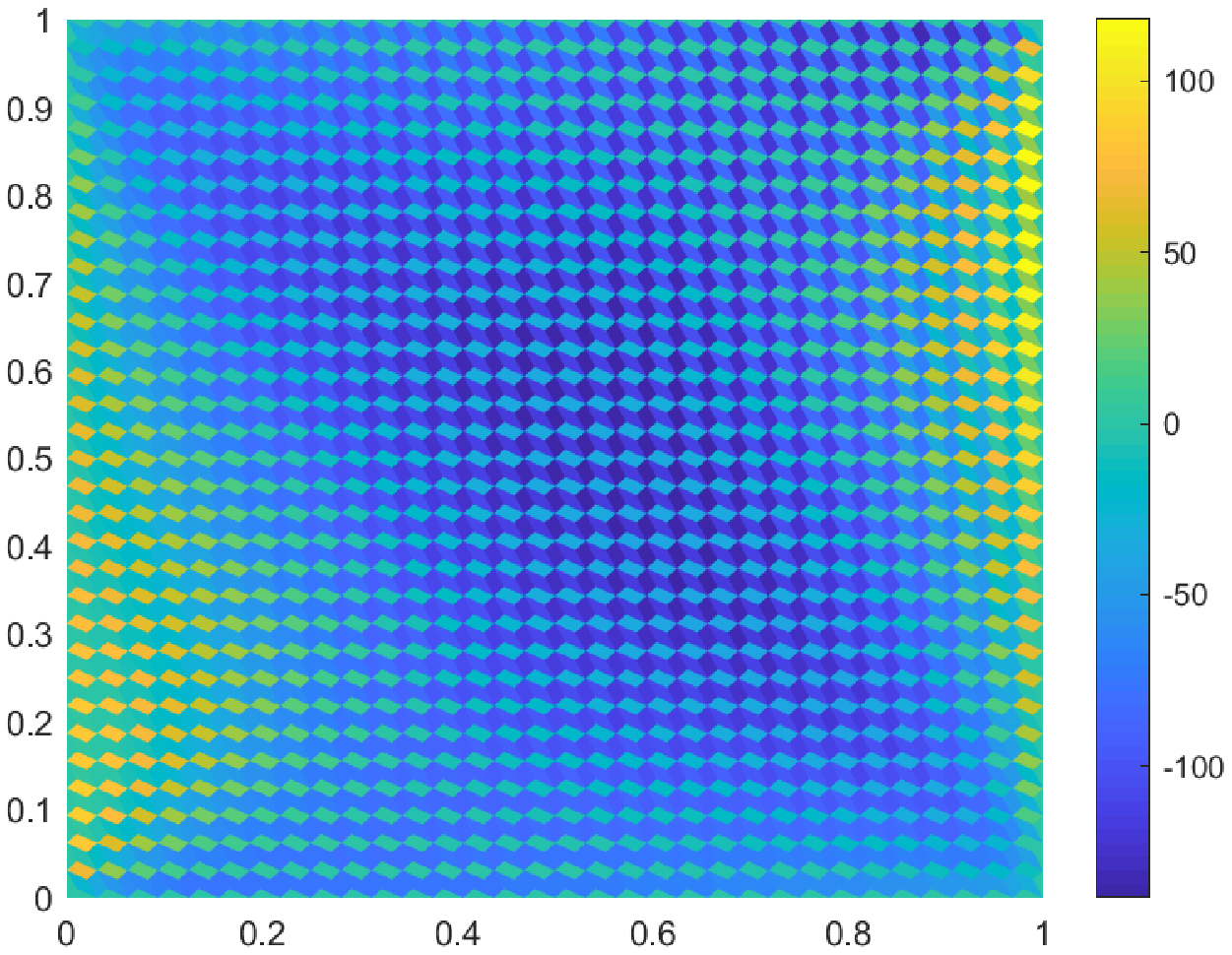}
    \includegraphics[width=0.32\textwidth]{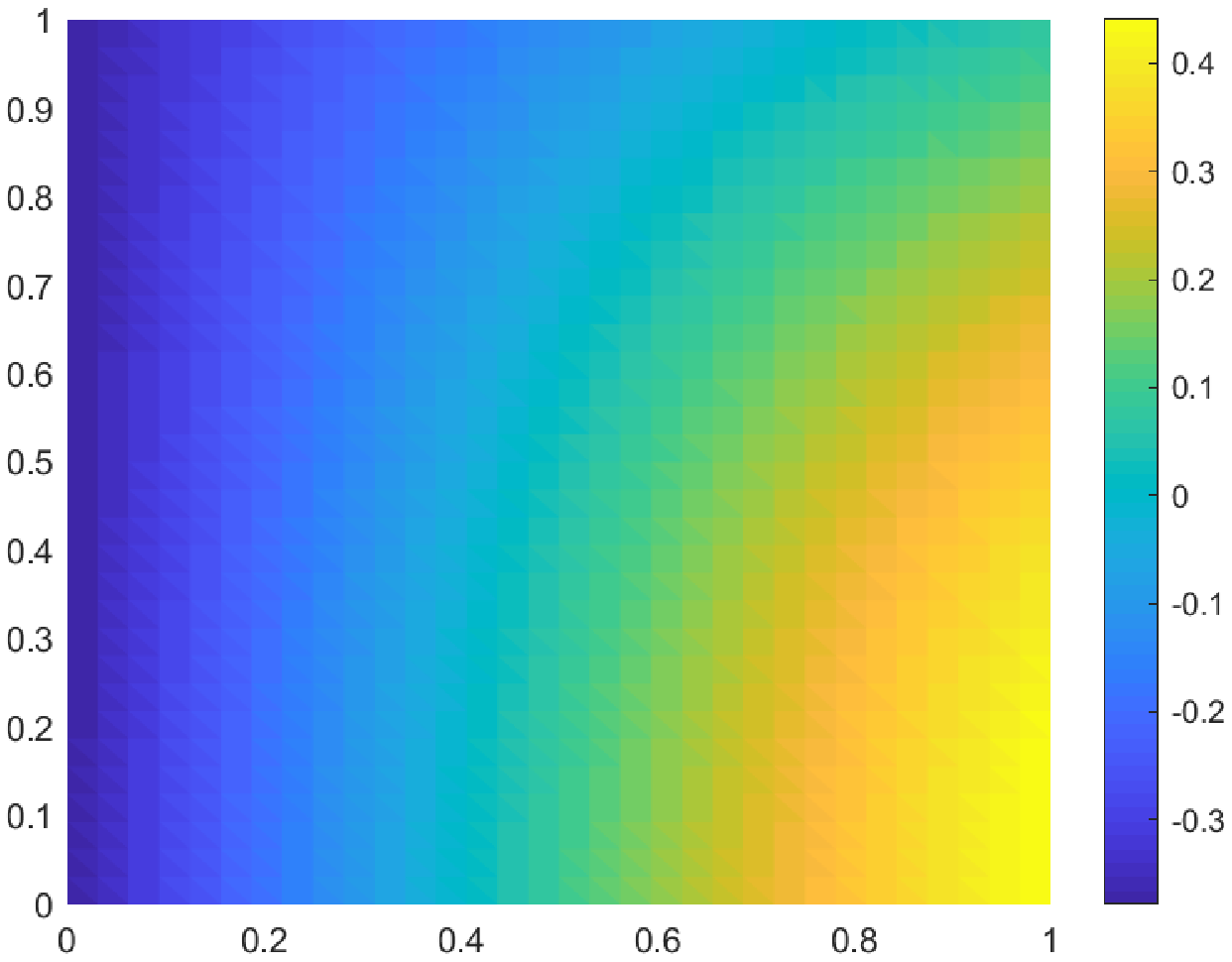}
    \caption{Example~\ref{ex1:first}: Numerical approximations on the mesh with $h=1/32$ with $\nu=10^{-6}$. Top: numerical solution of $u_1$ (left), $u_2$ (middle), and $p$ (right) from SDG1. Bottom: numerical solution of $u_1$ (left), $u_2$ (middle), and $p$ (right) from SDG2.}
    \label{ex1:numerical-sol}
\end{figure}

\subsubsection{No flow}\label{sub:noflow}

In this example, we again set $\Omega=(0,1)^2$, and the exact velocity and pressure are defined by
\begin{align*}
\bm{u}=\left(
         \begin{array}{c}
           0 \\
           0 \\
         \end{array}
       \right),\quad p=-\frac{\text{Ra}}{2}y^2+\text{Ra} y-\frac{\text{Ra}}{3},
\end{align*}
where $\text{Ra}=1000$.

The numerical solution for velocity and pressure for both algorithms are displayed in Figure~\ref{ex1:sol}. We can observe that SDG1 delivers zero velocity fields, which matches the exact solution. However, SDG2 yields nonzero velocity, which is far from the exact velocity. Then we show the error profiles for $\|\bm{u}-\bm{u}_h\|_0$, $\|\bm{\omega}-\bm{\omega}_h\|_0$, $\|p-p_h\|_0$ and $\|\mathcal{I}_h\bm{u}-\bm{u}_h\|_0$. The errors from SDG1 approach zero, see Table~\ref{table1}, whereas this is not the case for SDG2, see Table~\ref{table2}.

\begin{figure}[t]
    \centering
    \includegraphics[width=0.32\textwidth]{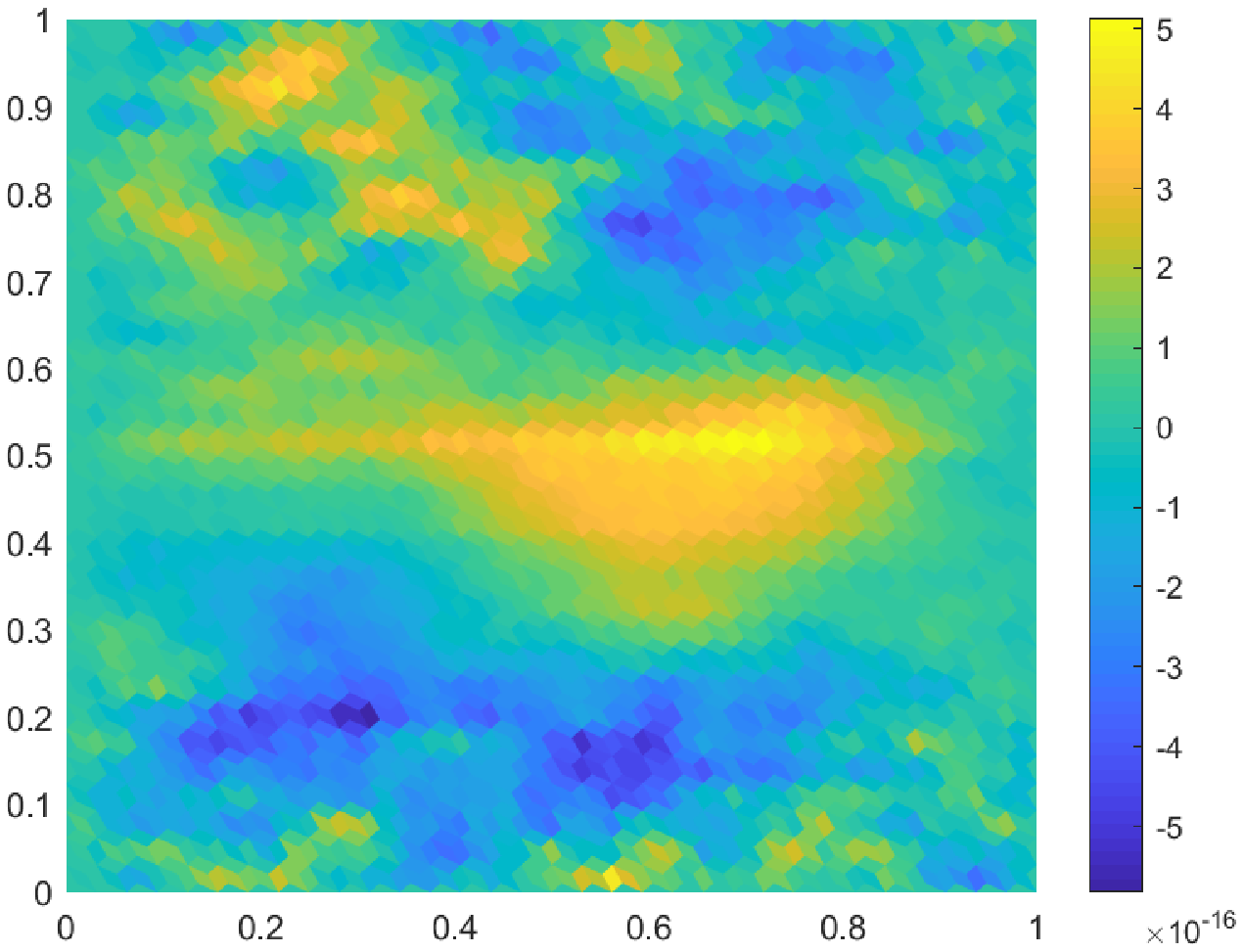}
    \includegraphics[width=0.32\textwidth]{figs/ex1_sol_u1.eps}
    \includegraphics[width=0.32\textwidth]{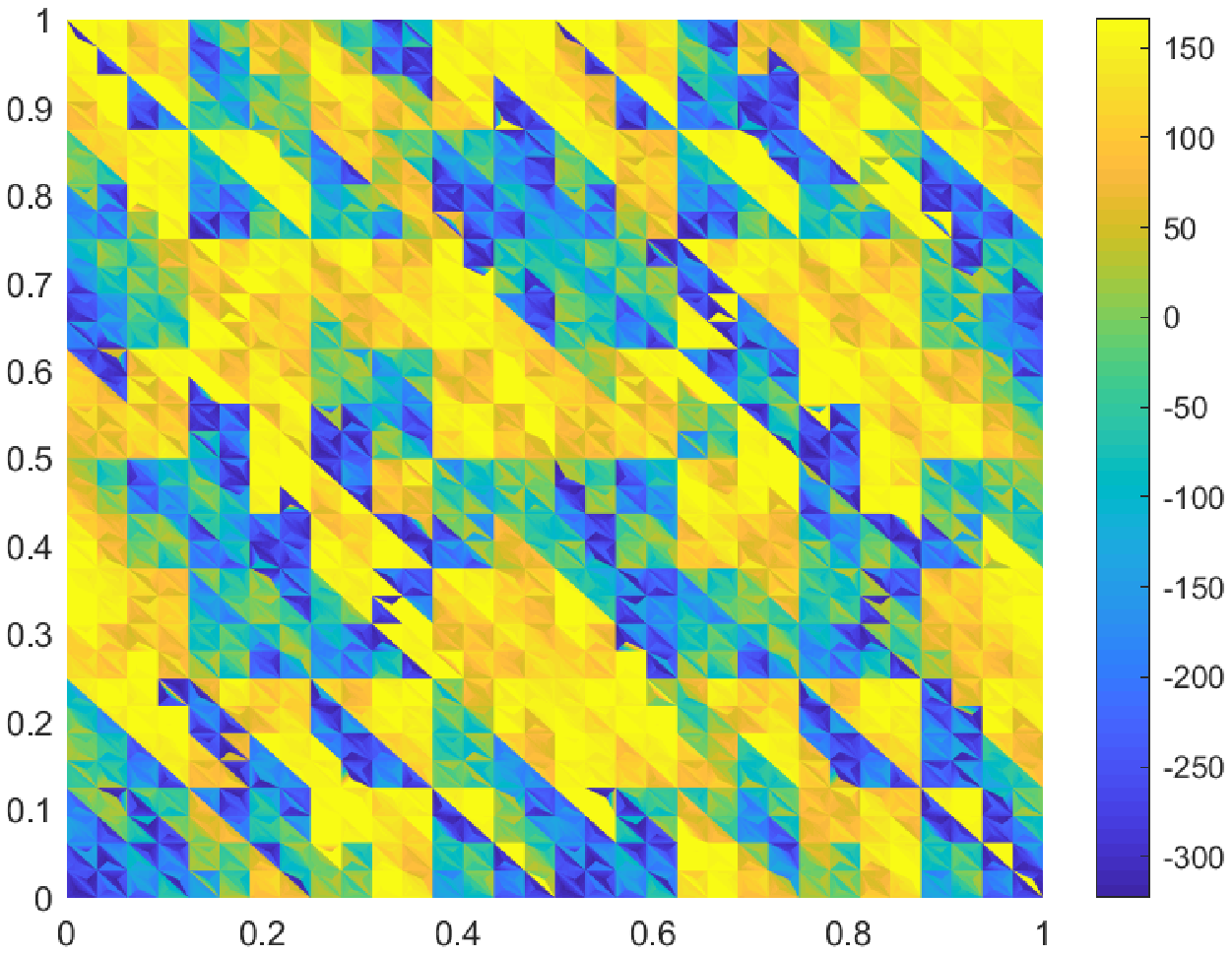}\\
    \includegraphics[width=0.32\textwidth]{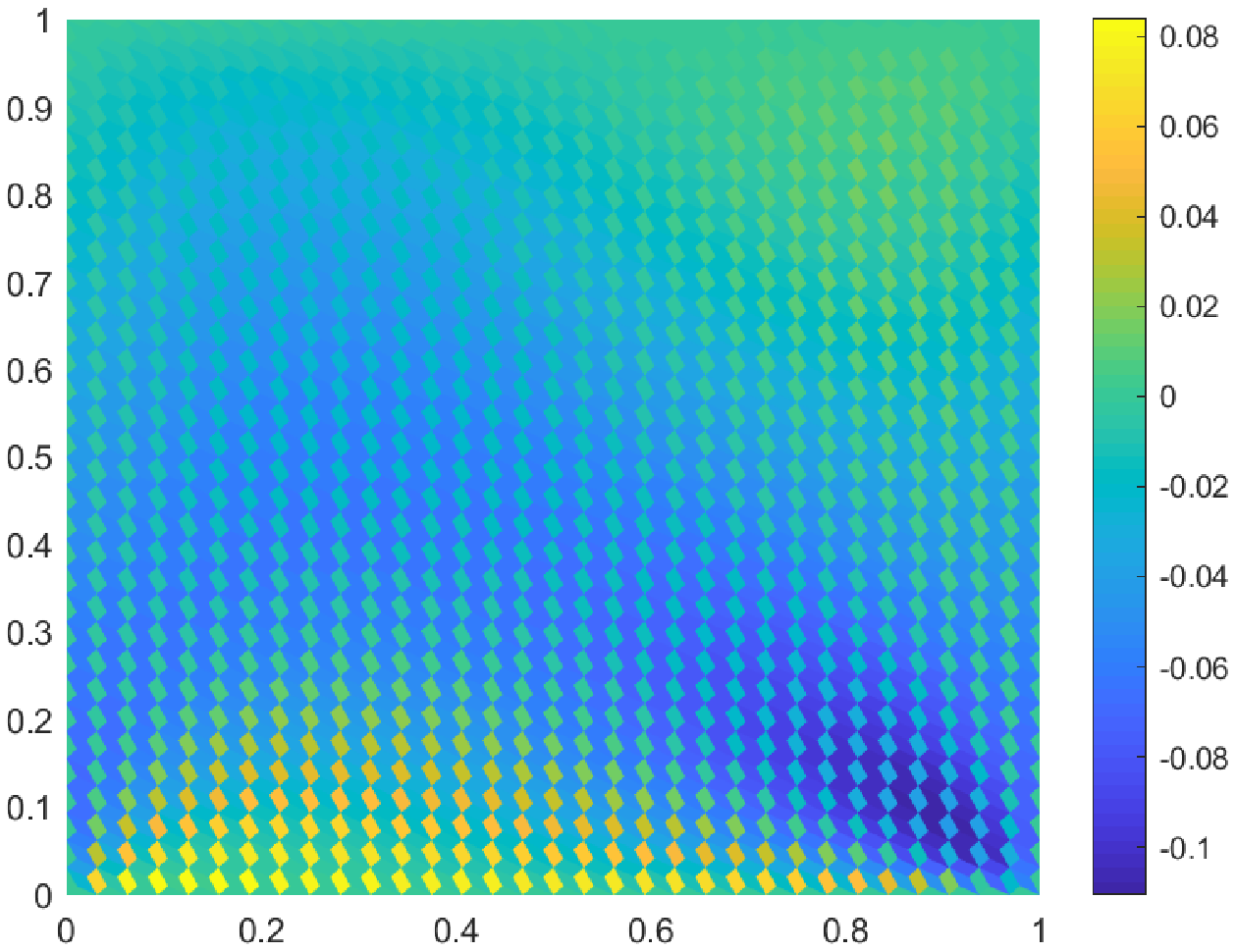}
    \includegraphics[width=0.32\textwidth]{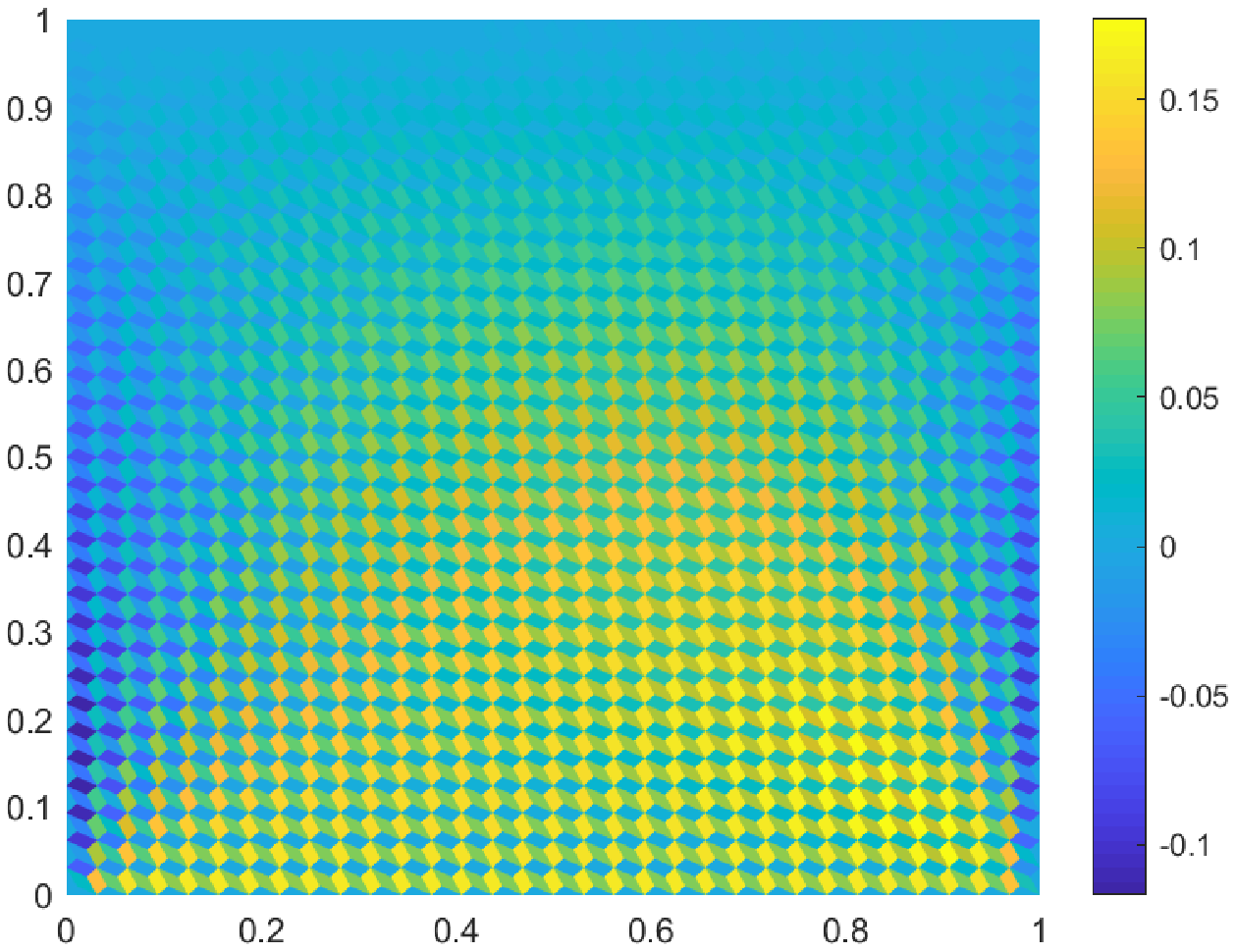}
     \includegraphics[width=0.32\textwidth]{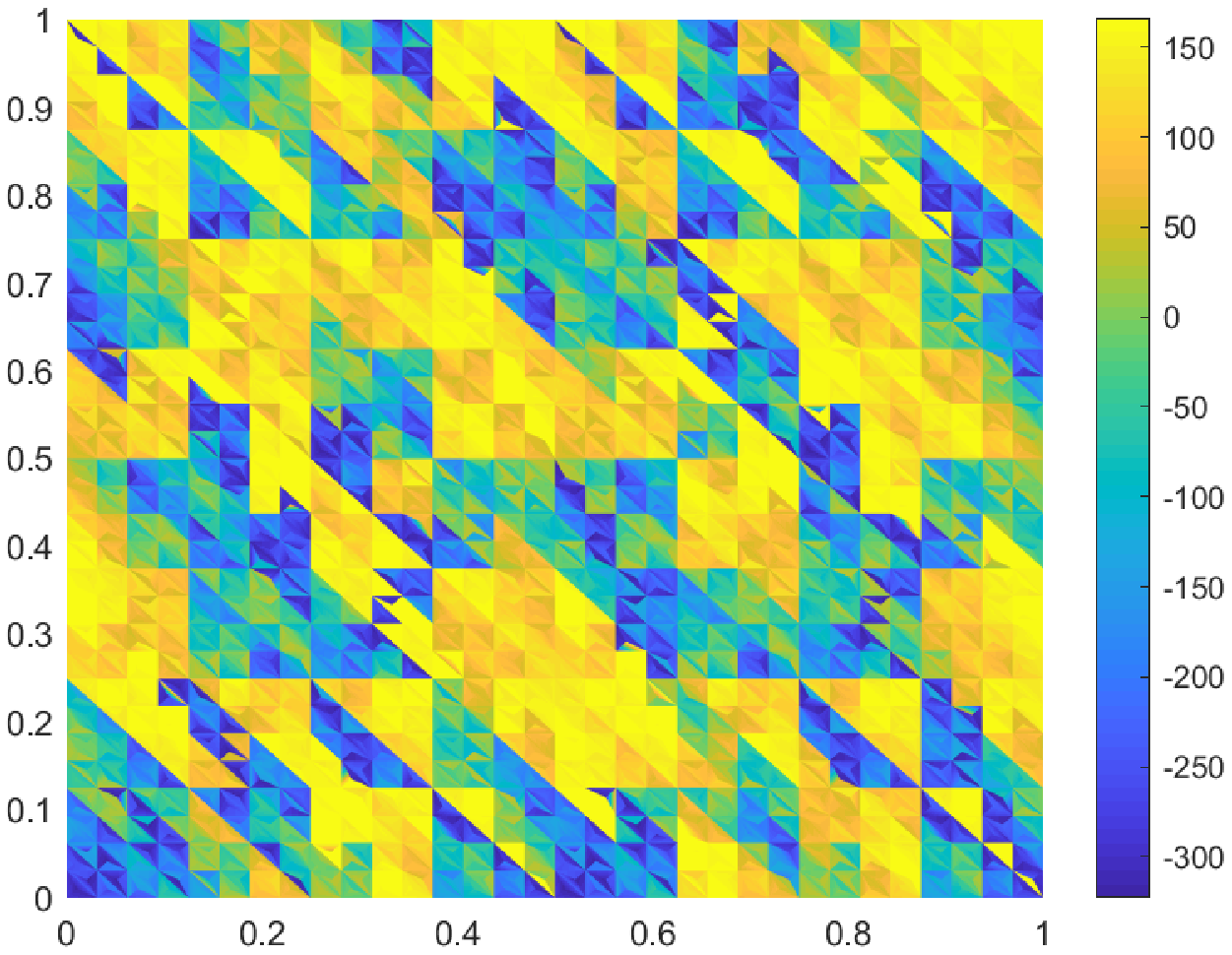}
    \caption{Example~\ref{sub:noflow}: Numerical approximations on the mesh with $h=1/32$. Top: numerical solution of $u_1$ (left), $u_2$ (middle), and $p$ (right) from SDG1. Bottom: numerical solution of $u_1$ (left), $u_2$ (middle), and $p$ (right) from SDG2.}
    \label{ex1:sol}
\end{figure}

\begin{table}[t]
\begin{center}
{\footnotesize
\begin{tabular}{c|c c|c c|c c|c c}
\hline
 Mesh &\multicolumn{2}{|c|}{$\|\bm{u}-\bm{u}_h\|_{0}$} & \multicolumn{2}{|c|}{$\|\bm{\omega}-\bm{\omega}_h\|_{0}$} & \multicolumn{2}{|c}{$\|p-p_h\|_{0}$}& \multicolumn{2}{|c}{$\|\mathcal{I}_h\bm{u}-\bm{u}_h\|_{0}$}\\
\hline
  $h^{-1}$ & Error & Order & Error & Order &  Error & Order & Error & Order\\
\hline
  2  & 1.94e-015 &   N/A    &1.36e-014 &  N/A &66 &   N/A & 1.94e-15& N/A\\
  4  & 5.34e-016 &   1.86   &5.00e-015 & 1.44 &33 & 0.97  & 5.35e-16&1.86 \\
  8  & 3.95e-016 &   0.43   &5.76e-015 & -0.20 &16 & 0.99 & 3.95e-16&0.43\\
  16 & 3.73e-016 &  0.08    &7.42e-015 & -0.36 &8.4 & 0.99&  3.73e-16&0.08\\
  32 & 2.63e-016 &  0.50   &7.14e-015 & 0.05 &4.2 & 0.99 &  2.63e-16&0.50\\
\hline
\end{tabular}}
\caption{Convergence history for SDG1 for Example~\ref{sub:noflow}.}
\label{table1}
\end{center}
\end{table}

\begin{table}[t]
\begin{center}
{\footnotesize
\begin{tabular}{c|c c|c c|c c|c c}
\hline
 Mesh &\multicolumn{2}{|c|}{$\|\bm{u}-\bm{u}_h\|_{0}$} & \multicolumn{2}{|c|}{$\|\bm{\omega}-\bm{\omega}_h\|_{0}$} & \multicolumn{2}{|c}{$\|p-p_h\|_{0}$}& \multicolumn{2}{|c}{$\|\mathcal{I}_h\bm{u}-\bm{u}_h\|_{0}$}\\
\hline
  $h^{-1}$ & Error & Order & Error & Order &  Error & Order & Error & Order\\
\hline
  2  & 8.75 &   N/A    &51 &  N/A &75 &   N/A & 8.75& N/A\\
  4  & 3.60 &   1.27   &34 & 0.57 &38 & 0.95  & 3.60&1.27 \\
  8  & 1.12 &   1.68   &19 & 0.83 &18 & 1.05 & 1.12&1.68\\
  16 & 0.30 &  1.88    &10 & 0.93 &8.9 &1.06&  0.30&1.88\\
  32 & 0.08 &  1.95   &5 & 0.97&4.3 & 1.04 & 0.07&1.96\\
\hline
\end{tabular}}
\caption{Convergence history for SDG2 for Example~\ref{sub:noflow}.}
\label{table2}
\end{center}
\end{table}

\subsection{Trapezoidal mesh}\label{ex:trape}

Let $\Omega=(0,1)^2$ and we choose the exact solution to be
\begin{align*}
\bm{u}=
\left(
  \begin{array}{c}
    -e^{x}(y\cos(y)+\sin(y)) \\
    e^{x}y\sin(y) \\
  \end{array}
\right),\quad p=2e^{x}\sin(y).
\end{align*}

In this test, we employ the trapezoidal mesh shown in Figure~\ref{mesh}. The convergence history against the number of degrees of freedom for $\nu=1$ is reported in Figure~\ref{ex2:accuracy}, and optimal convergence rates matching the theoretical results can be obtained. Moreover, we also show the convergence history against the number of degrees of freedom for $\nu=10^{-6}$ for SDG1, as expected, optimal convergence rates can be obtained. To verify the robustness, we show the errors for various values of $\nu$, i.e., $\nu=10^j (j=-6,\cdots,2)$ for both algorithms with the number of degrees of freedom (dof) to be $4993$ in Figure~\ref{ex2:error-profile}.
Similarly, we can observe that SDG2 is not pressure robust and SDG1 is pressure robust. The velocity error of SDG1 remains a constant for various values of $\nu$ while the velocity error of SDG2 is asymptotically proportional to $1/\nu$ when $\nu\leq 1$. Moreover, the $L^2$ error of velocity gradient from SDG1 is asymptotically proportional to $\nu$, whereas, the $L^2$ error of velocity gradient from SDG2 tends to be a constant when $\nu\leq 1$.

\begin{figure}[t]
    \centering
    \includegraphics[width=0.35\textwidth]{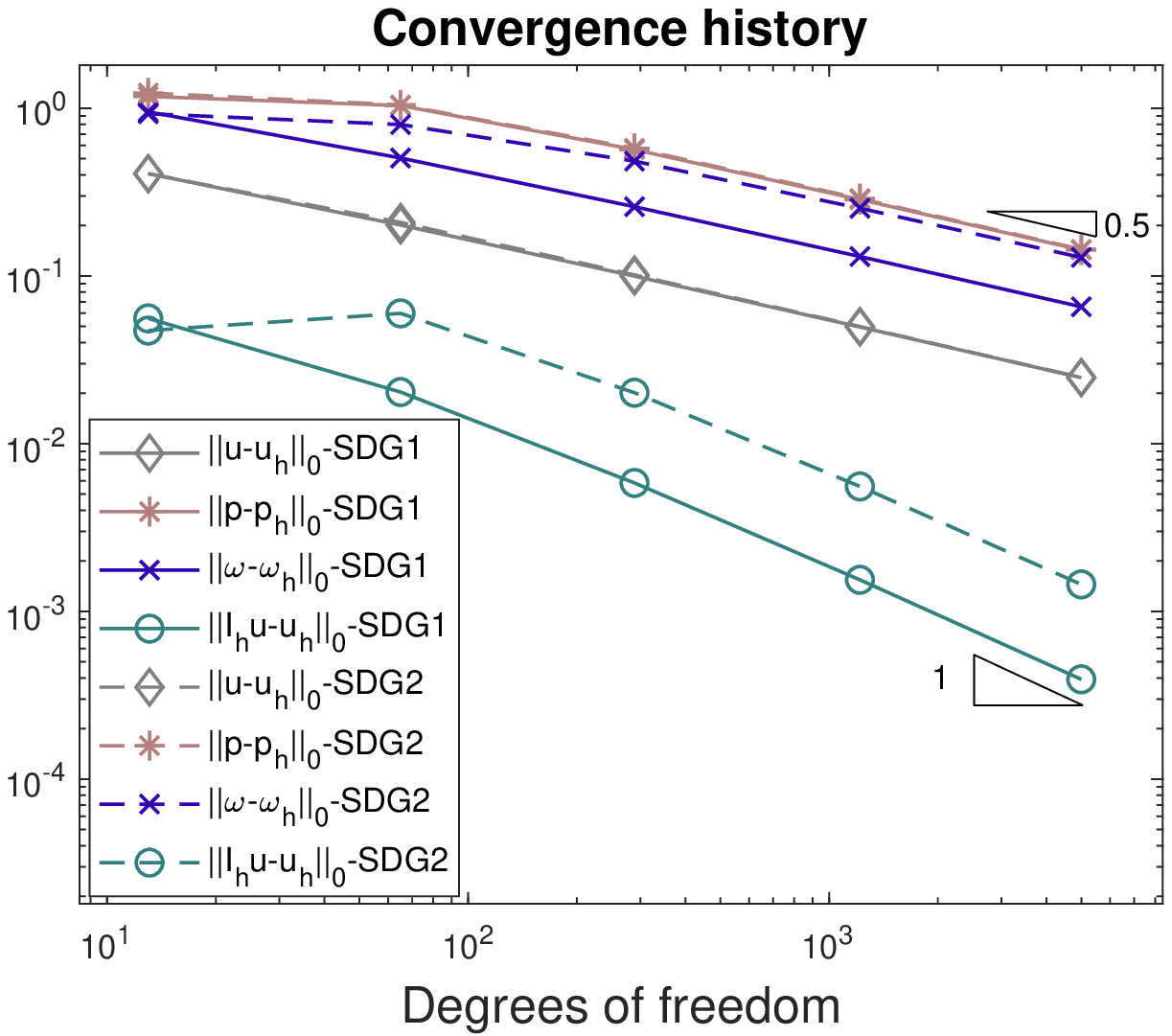}
    \includegraphics[width=0.35\textwidth]{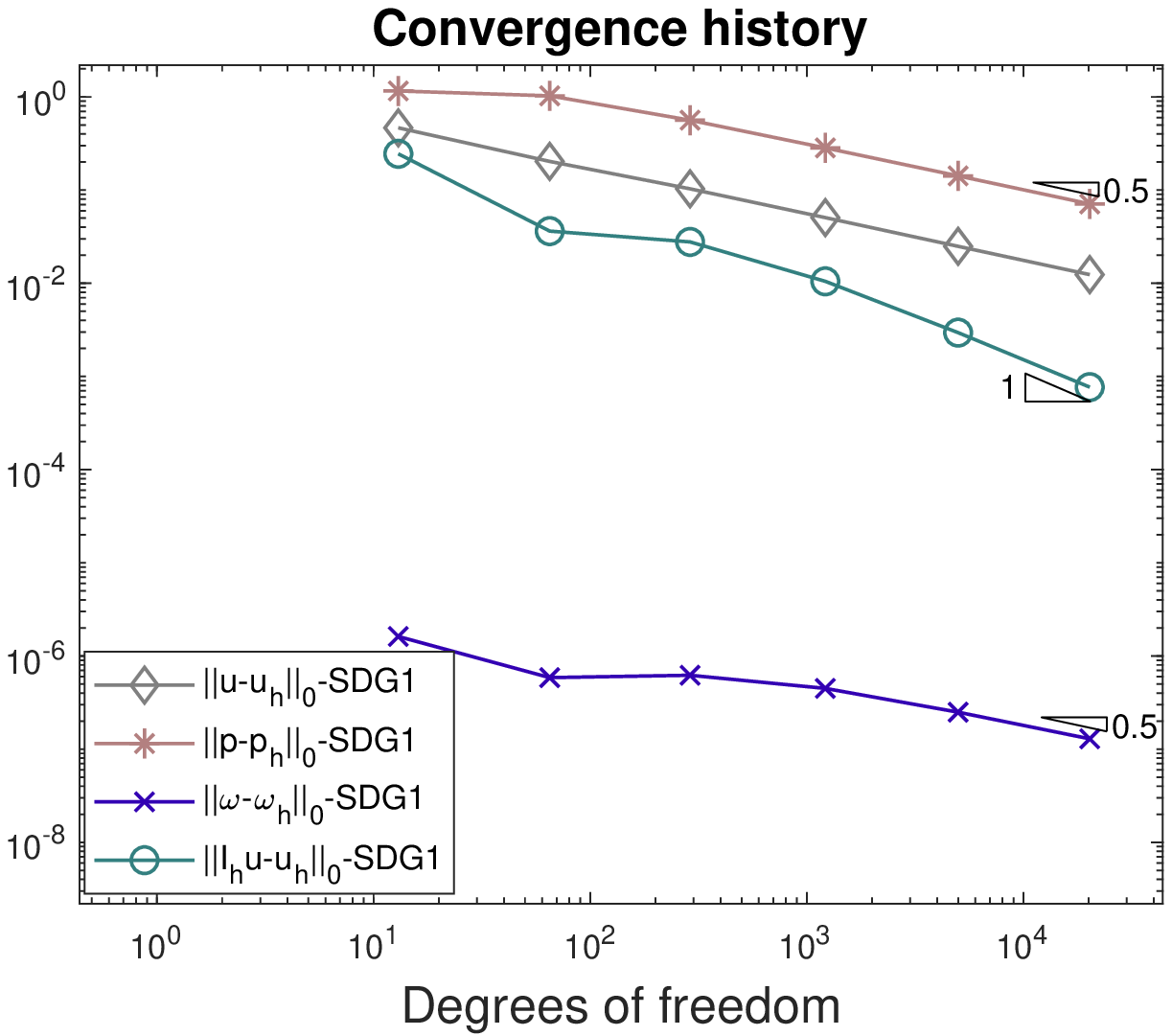}
    \caption{Example~\ref{ex:trape}: Convergence history for $\nu=1$ (left) and $\nu=10^{-6}$ (right).}
    \label{ex2:accuracy}
\end{figure}

\begin{figure}[t]
    \centering
    \includegraphics[width=0.32\textwidth]{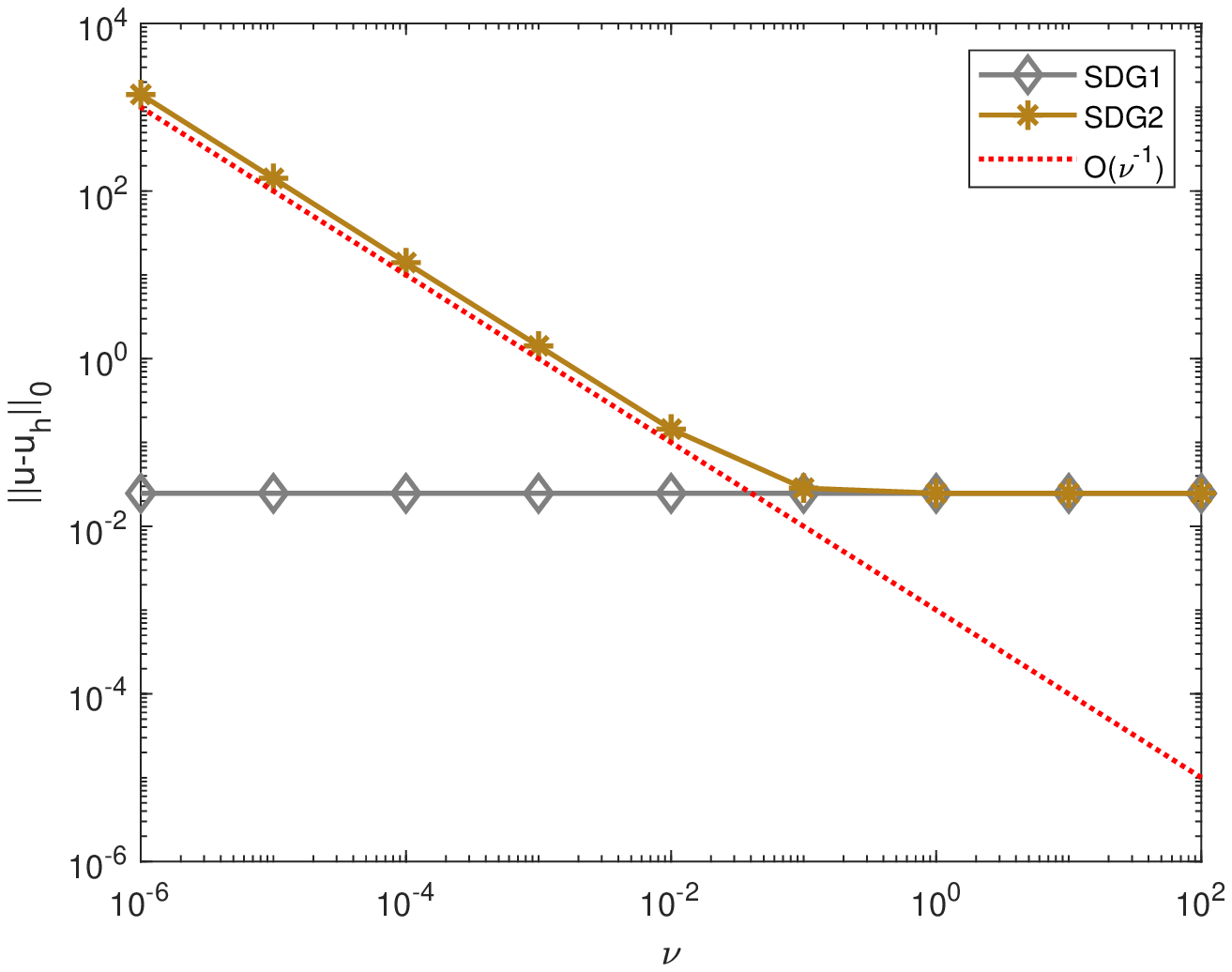}
    \includegraphics[width=0.32\textwidth]{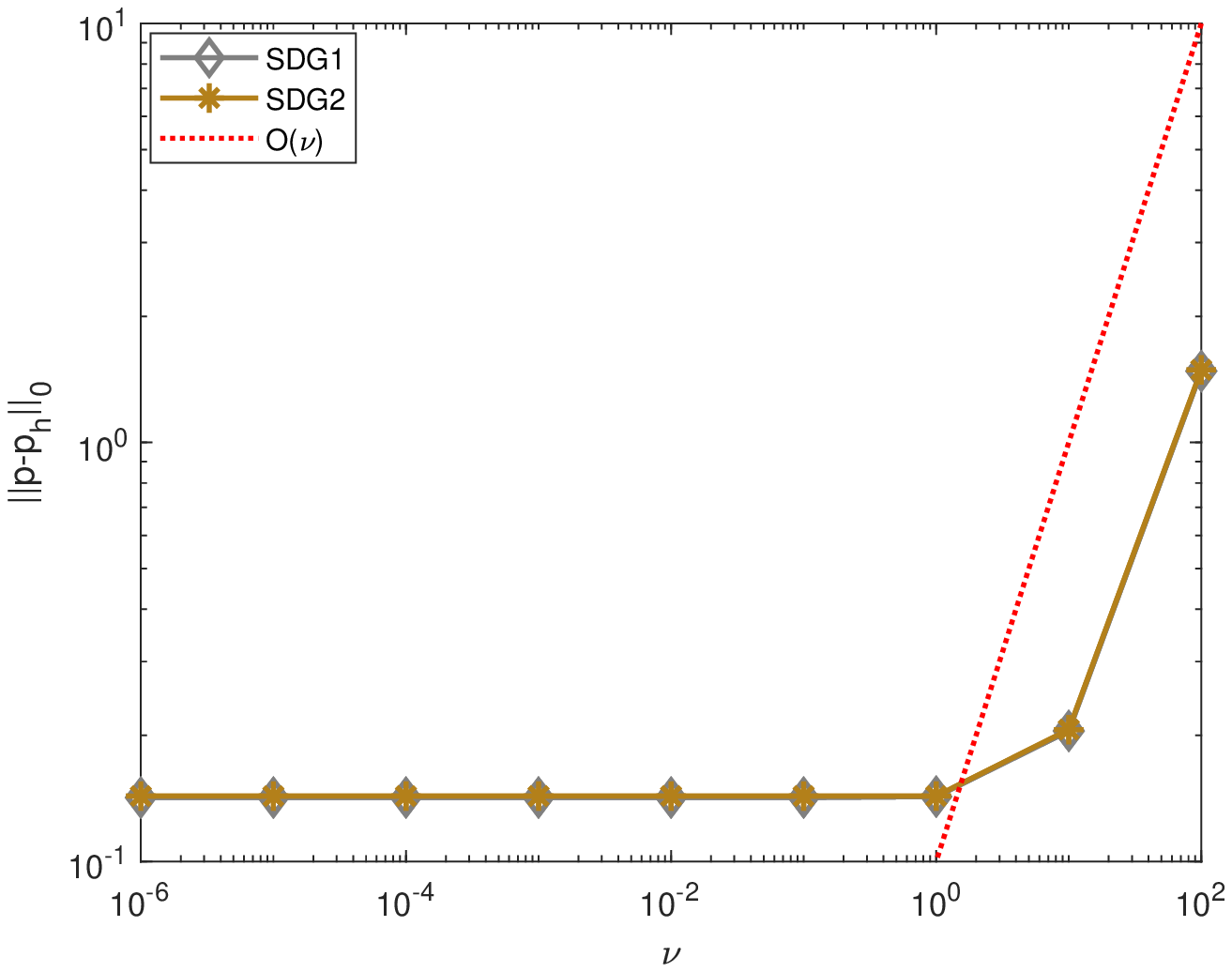}
     \includegraphics[width=0.32\textwidth]{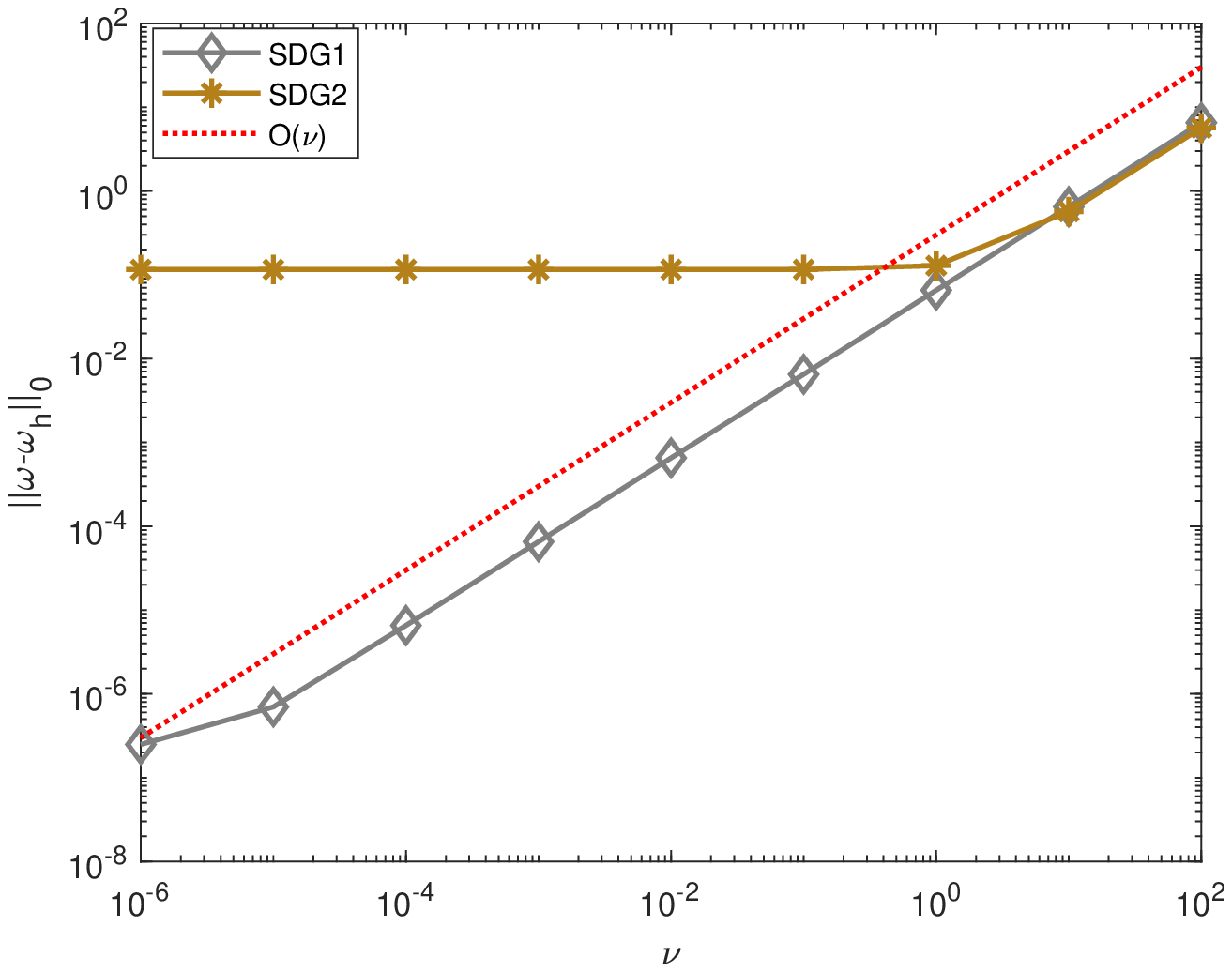}
    \caption{Example~\ref{ex:trape}: Error profiles for velocity (left), pressure (middle) and velocity gradient (right) on trapezoidal mesh with $\text{dof}=4993$.}
    \label{ex2:error-profile}
\end{figure}

\subsection{Polygonal mesh}\label{ex:polygon}
In this example, we choose the same exact solution given in \eqref{eq:exact1} and exploit the polygonal mesh displayed in Figure~\ref{mesh}. Figure~\ref{fig:con} shows the convergence history against the number of degrees of freedom for $\nu=1$. To verify the robustness of our method, we consider $L^2$ errors of velocity, pressure and velocity gradient with various values of $\nu$, i.e., $\nu=10^j (j=-5,\cdots,2)$ on the fixed mesh, and the numerical results are given in Figure~\ref{ex3:accuracy}. Again, we can obtain similar conclusions as sections~\ref{ex:tri} and \ref{ex:trape}.

\begin{figure}[t]
    \centering
    \includegraphics[width=0.35\textwidth]{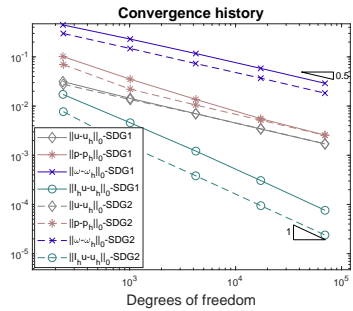}
    \caption{Example~\ref{ex:polygon}: Convergence history for $\nu=1$.}
    \label{fig:con}
\end{figure}

\begin{figure}[t]
    \centering
    \includegraphics[width=0.32\textwidth]{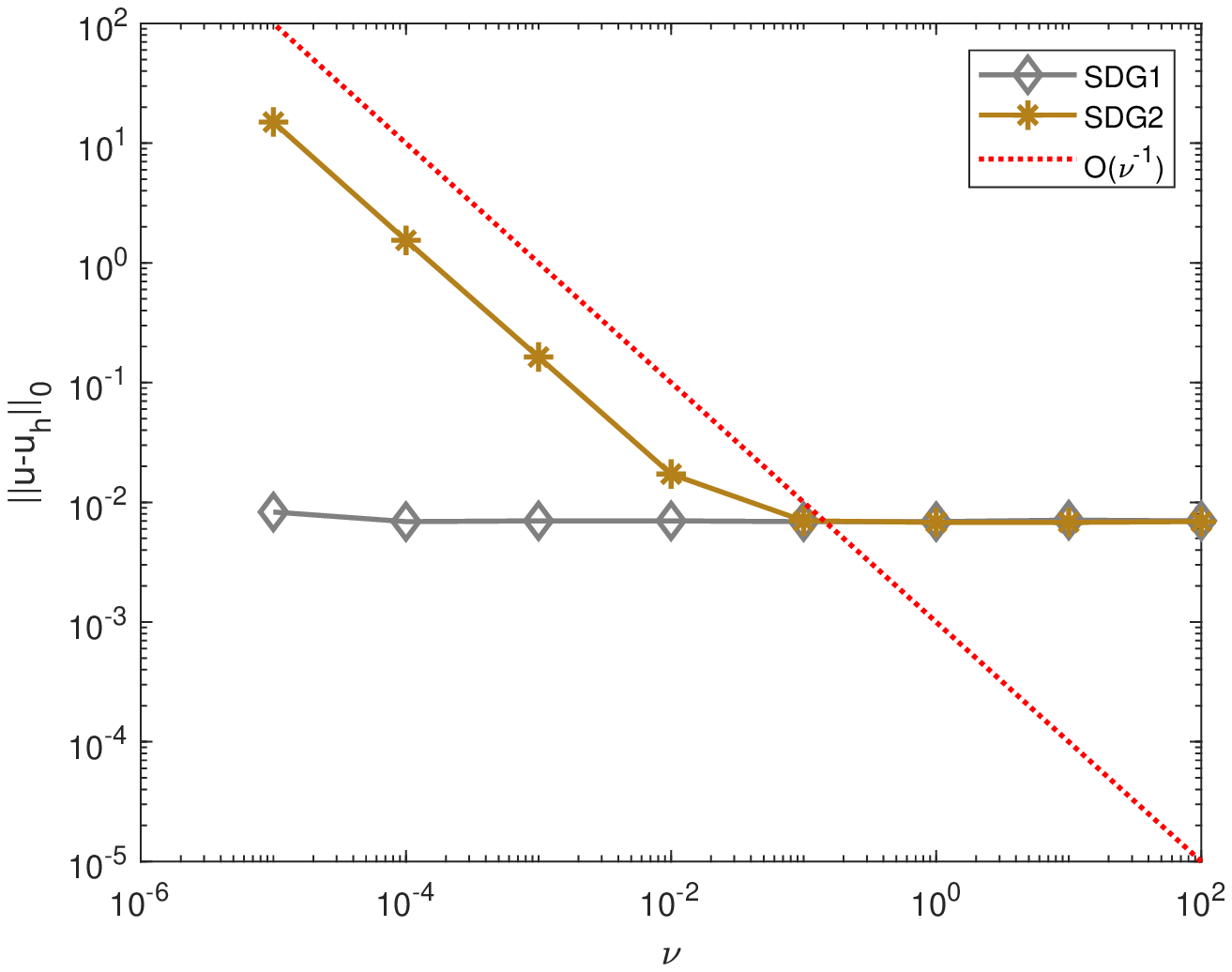}
    \includegraphics[width=0.32\textwidth]{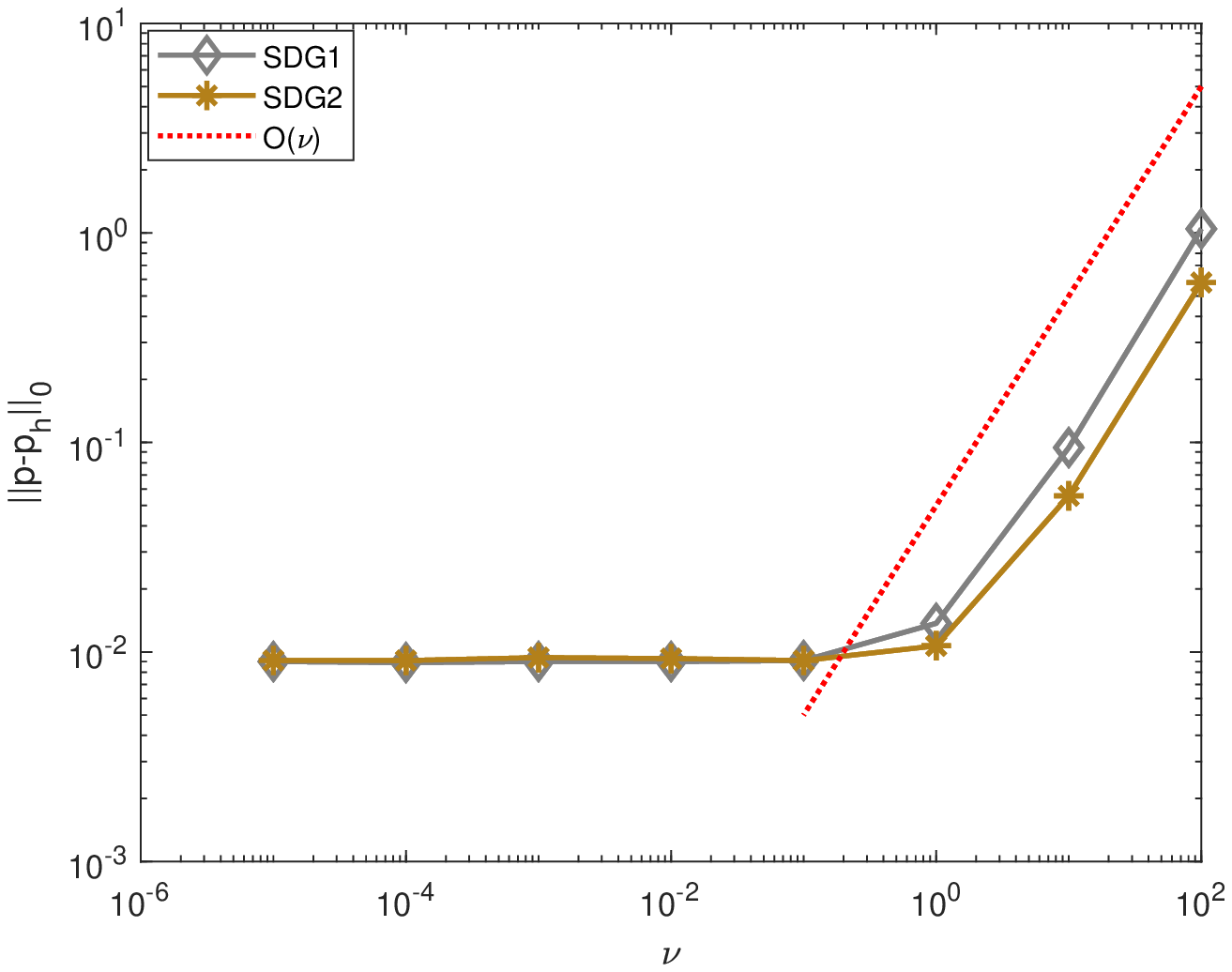}
     \includegraphics[width=0.32\textwidth]{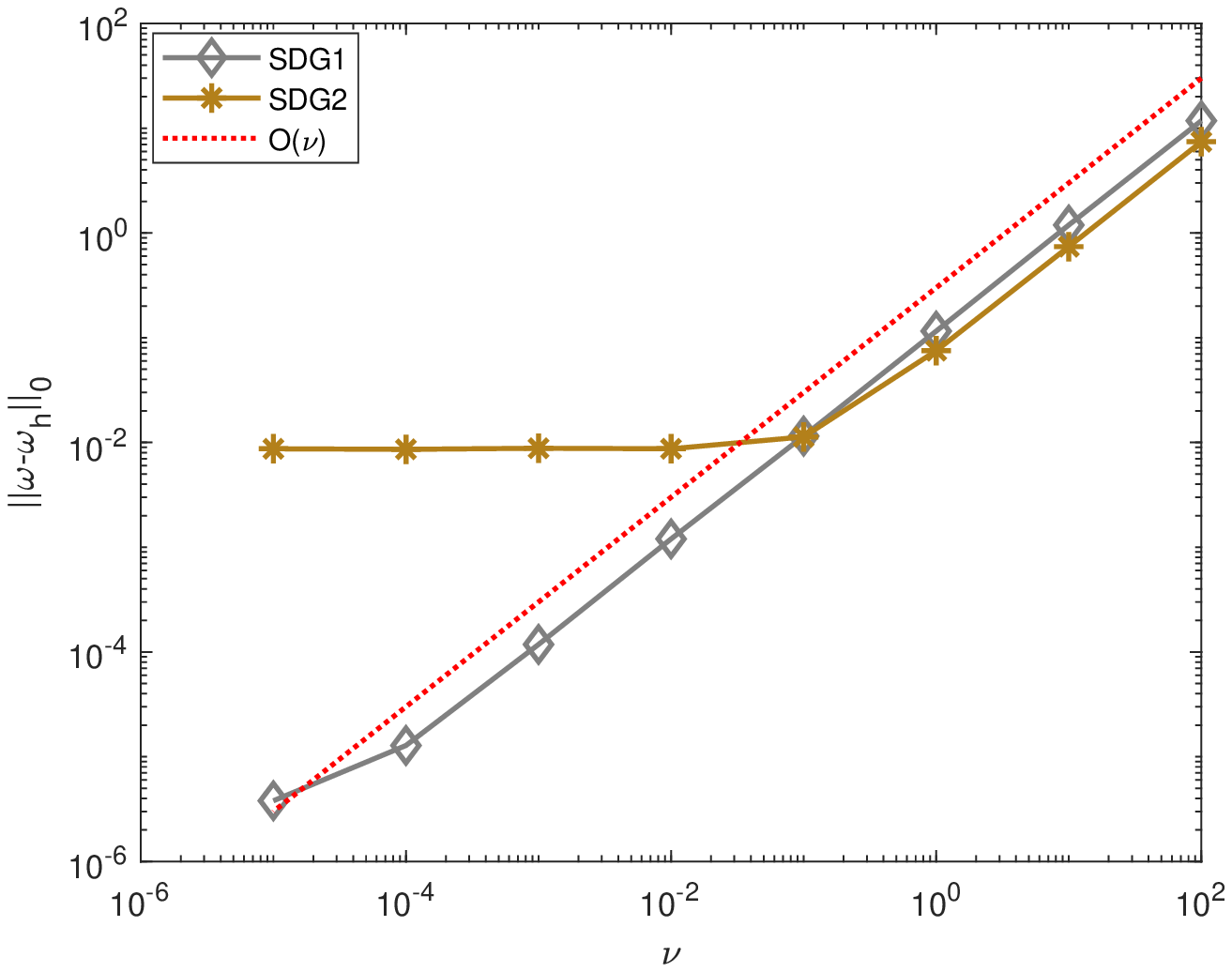}
    \caption{Example~\ref{ex:polygon}: Error profiles for velocity (left), pressure (middle) and velocity gradient (right) on polygonal mesh with $\text{dof}=4235$.}
    \label{ex3:accuracy}
\end{figure}

%

\section{Conclusion}
In this paper we have developed a pressure robust staggered discontinuous Galerkin method for the Stokes equations, where the crux is to modify the right hand side by using divergence preserving operator. It is proved theoretically that the velocity error estimates are independent of $\nu$. In addition, we are able to show that the numerical approximation for velocity superconverges to a suitable projection. Several numerical experiments are carried out to test the accuracy and robustness of the proposed method. In the future we will extend this approach to solve Navier-Stokes equations with arbitrary polynomial orders, in which case the smallness assumption given in \cite{ChungQiu17} can be weakened to concern only the solenoidal part $\bm{g}$ of the body force.

\section*{Acknowledgments}

The research of Eric Chung is partially supported by the Hong Kong RGC General Research Fund (Project numbers 14304217 and 14302018) and CUHK Faculty of Science Direct Grant 2019-20. The research of Eun-Jae Park is supported by the National Research Foundation of Korea (NRF) grant funded by the Ministry of
Science and ICT (NRF-2015R1A5A1009350 and NRF-2019R1A2C2090021).


\end{document}